\newcommand{\R}{\mathbb{R}}
\newcommand{\N}{\mathbb{N}}
\newcommand{\B}{\mathbb{B}}
\newcommand{\supp}{\textsf{supp}}
\numberwithin{equation}{section}
\newtheorem{theorem}{Theorem}[section]
\newtheorem*{theorem*}{Theorem}
\newtheorem{corollary}[theorem]{Corollary}
\newtheorem{proposition}[theorem]{Proposition}%[section]
\newtheorem{lemma}[theorem]{Lemma}%[section]
\theoremstyle{definition}
\newtheorem{definition}[theorem]{Definition}%[section]
\newtheorem{remark}[theorem]{Remark}%[section]
\newtheorem*{problem}{Problem}
\newtheorem{condition}[theorem]{Condition}%[section]
\title[isometric embedding]{On the isometric version of Whitney's strong embedding theorem}
\author{Wentao Cao}
\address{Academy for Multidisciplinary Studies, Capital Normal University, West 3rd Ring North Road 105, Beijing, 100048, P.R. China.}
\email{cwtmath@cnu.edu.cn}
\author{L\'aszl\'o Sz\'ekelyhidi Jr.}
\address{Institut f\"{u}r mathematik, Universit\"{a}t Leipzig, Augustusplatz 10, D-04109, Leipzig, Germany}
\email{laszlo.szekelyhidi@math.uni-leipzig.de}
\date{\today}
\keywords{isometric embedding, Nash-Kuiper theorem, absorbing higher order error, convex integration, corrugation}
\subjclass[2010]{ 53C24, 58A07}
\begin{document}

\begin{abstract}
We prove a version of Whitney's strong embedding theorem for isometric embeddings within the general setting of the Nash-Kuiper h-principle. More precisely, we show that any $n$-dimensional smooth compact manifold admits infinitely many global isometric embeddings into $2n$-dimensional Euclidean space, of H\"older class $C^{1,\theta}$ with $\theta<1/3$ for $n=2$ and $\theta<(n+2)^{-1}$ for $n\geq3$. The proof is performed by Nash-Kuiper's convex integration construction and applying the gluing technique of the authors on short embeddings with small amplitude. 
\end{abstract}

\maketitle

%
%%%%%%%%%%%%%%%%%%%%%%%%%%%%%%%%%%%%%%%%%%%%%%%%%%
\section{Introduction}
 %%%%%%%%%%%%%%%%%%%%%%%%%%%%%%%%%%%%%%%%%%%%%%%%%%%%%%%%%%%%%%%%%

Differential manifold is generalised from Euclidean space. Naturally, one may wonder whether any abstract manifold can be embedded into some Euclidean spaces.
In 1944, Whitney \cite{Whitney1944} %applied his powerful trick %, which was developed  to be {\it h-cobordism theory}
%by Smale \cite{Sm61} to show Poinc\'are's conjecture for $n\geq5$ in 1961,
gave the answer and proved that
\begin{theorem*}[Whitney's Strong Embedding Theorem]
Any smooth $n$-dimensional compact manifold $\mathcal{M}^n$ can be embedded into $2n$-dimensional Euclidean spaces $\R^{2n}.$
\end{theorem*}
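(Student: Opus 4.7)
The plan is to deduce the theorem in three stages, each reducing the ambient dimension.

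\textbf{Stage 1 (initial embedding into some large $\R^N$).} By compactness, cover $\mathcal{M}^n$ by finitely many charts $(U_i,\varphi_i)_{i=1}^k$ and choose a subordinate partition of unity $\{\rho_i\}$. Extending $\rho_i\varphi_i$ by zero outside $U_i$, define
\[
F(p) = \bigl(\rho_1(p)\varphi_1(p),\,\rho_1(p),\,\ldots,\,\rho_k(p)\varphi_k(p),\,\rho_k(p)\bigr) \in \R^{k(n+1)}.
\]
A direct check using the fact that $\rho_i(p)>0$ on $\supp\rho_i$ shows $F$ is a smooth injective immersion, hence by compactness an embedding into $\R^N$ with $N=k(n+1)$.

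\textbf{Stage 2 (projection down to $\R^{2n+1}$).} Given an embedding $f:\mathcal{M}^n\hookrightarrow \R^N$ with $N\geq 2n+2$, a unit vector $v\in S^{N-1}$ yields an embedding via the projection to $v^\perp$ unless $v$ is a unit tangent vector to $f(\mathcal{M})$ (the image in $S^{N-1}$ of a smooth map from the $(2n-1)$-dimensional unit tangent bundle $S\mathcal{M}$) or a unit secant direction $(f(p)-f(q))/|f(p)-f(q)|$ (the image of a smooth map from the $2n$-manifold $\mathcal{M}\times\mathcal{M}\setminus\Delta$). Since $2n<N-1$, Sard's theorem produces a full-measure set of good directions, and one iterates to reach $N=2n+1$.

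\textbf{Stage 3 (from $\R^{2n+1}$ to $\R^{2n}$).} A further generic projection produces an immersion $f:\mathcal{M}^n\to\R^{2n}$ whose self-intersections form a finite set of transverse double points (the secant map now has $2n$-dimensional image in $S^{2n}$, so injectivity can no longer be forced, but transversality is generic). These are removed in pairs by the \emph{Whitney trick}: to each double point $f(p)=f(q)$ assign a sign by comparing the local orientations of $df(T_p\mathcal{M})$ and $df(T_q\mathcal{M})$ (working modulo $2$ in the non-orientable case), arrange the algebraic count to vanish by inserting if necessary a small embedded $n$-sphere, pair double points of opposite sign, and for each pair $(p,q)$ construct an embedded Whitney disk $D\subset\R^{2n}$ whose boundary is the union of two arcs in $f(\mathcal{M})$ joining $p$ to $q$; an ambient isotopy supported in a tubular neighbourhood of $D$ cancels the pair.

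\textbf{Main obstacle.} The delicate step is plainly the Whitney trick. Making the Whitney disk embedded and disjoint from $f(\mathcal{M})\setminus\partial D$ is a clean general-position argument precisely when $n\geq 3$, where the relevant codimension $2n-(n+2)=n-2$ is positive; the case $n=2$ (surfaces in $\R^4$) genuinely fails this transversality count and demands classification-based, ad hoc constructions. Non-orientability adds the book-keeping of $\mathbb{Z}/2$-intersection numbers and possibly an auxiliary stabilising sphere to trivialise the algebraic self-intersection count before opposite pairs become available; handling this carefully is where the bulk of Whitney's original argument is spent.
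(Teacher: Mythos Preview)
The paper does not prove this theorem; it is quoted in the introduction as a classical 1944 result of Whitney, purely to motivate the paper's isometric question, and no proof or sketch is given. There is therefore nothing in the paper to compare your proposal against.

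For what it is worth, your outline is the standard route and is essentially correct: embed into a large $\R^N$, project generically down to $\R^{2n+1}$ (the easy Whitney theorem), project once more to a generic immersion in $\R^{2n}$ with finitely many transverse double points, and then cancel them via the Whitney trick. Your diagnosis of the hard step is accurate --- the Whitney-disk argument works cleanly only for $n\geq 3$, with $n=1,2$ handled separately by direct construction. One quibble: ``inserting if necessary a small embedded $n$-sphere'' is not the usual device for adjusting the algebraic self-intersection count; Whitney instead performs a \emph{local} modification of the immersion (a ``kink'' modeled on the standard self-transverse immersion of $\R^n$ in $\R^{2n}$ with a single double point), which introduces one extra double point of prescribed sign without altering the topology of $\mathcal{M}$.
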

\noindent Furthermore, if the differential manifold $\mathcal{M}^n$  is imposed with some Riemannian metric $g$, can it still be embedded into the same Euclidean space maintaining the given metric? It is the main topic of the present paper.  The following problem is considered.

\begin{problem}\label{prob}
Let $(\mathcal{M}^n, g)$ be a smooth $n$-dimensional compact Riemannian manifold $\mathcal{M}^n$  with some metric $g$. Does there exist any isometric embedding $u:\mathcal{M}^n\hookrightarrow\R^{2n}$?
\end{problem}

\noindent Recall that a $C^1$ embedding $u:(\mathcal{M}^n, g)\hookrightarrow \R^m$ is isometric, provided the induced metric $u^\sharp e$ from the standard Euclidean metric $e$ on $\R^m$ agrees with $g$, i.e. $u^\sharp e$. In local coordinates $(x_1,\cdots, x_n)$, $g=\sum_{i,j=1}^ng_{ij}dx_idx_j$,  $(u^\sharp e)_{ij}=\partial_{x_i} u\cdot\partial_{x_j} u$ and $u=(u^1,\cdots, u^m)\in\R^m$, and the isometric embedding problem amounts to solving 
\begin{equation}\label{e:isometric}
\sum_{k=1}^m\frac{\partial u^k}{\partial x_i}\frac{\partial u^k}{\partial x_j}=g_{ij}, \quad i,j=1,\cdots, n,
\end{equation}
which is a system of $n_*:={n(n+1)}/2$ equations.

%%%%%%%%%%%%
\subsection{Background}
%%%%%%%%%%%

If the manifold and the metric is real analytic, M.~Janet \cite{Janet1927} showed that any $(\mathcal{M}^2,g)$ admits locally an analytic isometric embedding into $\R^3$. Sunsequently E.~Cartan \cite{Cartan1927} extended Janet's result to local analytic isometric embeddings $(\mathcal{M}^n,g)\hookrightarrow \R^{n_*}$. Indeed, noting that the (local) PDE \eqref{e:isometric} consists of $n_*$ equations, the target space has to have the same dimension $m=n_*$ in order to have a formally well-defined system of equations. 

If the metric is smooth, the situation changes completely. Indeed, the analogue of Janet's local result $(\mathcal{M}^2,g)\hookrightarrow \R^3$ in the smooth category in full generality is still an outstanding open problem (however, for results involving additional conditions on the curvature, we refer to \cite{HanHongBook}). On the other hand, starting with the startling discoveries of Nash \cite{Nash:1954vt,Nash:1956ty} and later molded into a very powerful and far-reaching principle by Gromov \cite{Gromov1973,Gromov,Gromov:2015tua}, if either the regularity of the embedding or the target dimension $m$ is substantially relaxed, one can recover existence at the expense of losing uniqueness - this is the essence of the h-principle. More precisely, 
\begin{enumerate}
    \item[(A)] In \cite{Nash:1956ty} Nash  showed that if the target dimension $m\geq 3n_*+4n$, then any short immersion can be uniformly approximated by isometric immersions of class $C^{\infty}$ 
    \item[(B)] In \cite{Nash:1954vt} he showed that if $m\geq n+2$, then any short immersion can be uniformly approximated by isometric immersions of class $C^1$. The latter was subsequently extended to $m\geq n+1$ by Kuiper \cite{Kuiper:1955th}.
\end{enumerate}
In both cases, if the initial short map is an embedding, so are the approximating isometric maps. 

Optimality of these results is a problem of current interest. In setting (A) the question is about optimality of the target dimension $m$. In \cite{GroRoh1970,Gromov} Nash's result was extended to $m\geq n_*+2n+3$, which was further improved by M.G\"unther in \cite{Gunther1989,Gunther1991} to $m\geq n_*+\max\{2n,n+5\}$. It is not known what is the optimal (i.e.~smallest) target dimension. 

In setting (B) the question is about the optimality of the regularity of the immersion, in particular on the H\"older scale $C^{1,\theta}$. Yu.~Borisov \cite{Borisov:1965wf,Borisov:2004wo} and subsequently Conti-De Lellis and the second author \cite{CoDeSz2012,Szek2014:LN} showed that the Nash-Kuiper theorem also holds for locally $C^{1, \theta}$ isometric immersions $u:\mathcal{M}^n\hookrightarrow\R^{m}(m\geq n+1)$ with $\theta<(1+2n_*)^{-1}$, which can be extended to global isometric immersions with $\theta<(1+2(n+1)n_*)^{-1}$. Moreover, for 2-dimensional discs the exponent above can be improved from $1/7$ to $1/5$  \cite{DISz2018}. In \cite{CaoSze2022} the authors were recently able to devise an induction-on-dimension scheme that, combined with the local convex integration iterations in \cite{CoDeSz2012,DISz2018}, are able to extend the local constructions to compact manifolds. Thus, the current state of the art for isometric immersions $u:(\mathcal{M},g)\hookrightarrow \R^{n+1}$ is the statement (B) above with $u\in C^{1,\theta}$, $\theta<(1+2n_*)^{-1}$ for $n\geq 3$ and $\theta<1/5$ for $n=2$. As far as the \emph{existence} of isometric embeddings is concerned, in combination with Whitney's embedding theorem this yields the existence of $C^{1,\theta}$ isometric embeddings $(\mathcal{M},g)\hookrightarrow\R^{2n}$. Our aim in this paper is to improve the threshold H\"older exponent in this particular setting.

The Nash-Kuiper iteration has been extended further in the case the target dimension is sufficiently large $m\geq 6(n+1)(n_*+1)+2n$ and the metric is non-smooth, $g\in C^{\beta}$, $\beta<2$, (in which case the techniques of \cite{Nash:1956ty,GroRoh1970,Gunther1991} do not apply), to yield $C^{1\theta}$ isometric embeddings for any $\theta<1+\beta/2$, and in the local setting of 2D polar caps De Lellis-Inauen have constructed $C^{1, \theta}(\theta<1/2)$ isometric embeddings into $\R^{14}$ in \cite{DI2020}. This construction was recently extended in \cite{CaoIn2020} to $C^{1, \theta}(\theta<1/2)$ isometric embeddings of general $\mathcal{M}^n$ into $\R^{n+2n_*}$. But the dimensions of the target spaces in these results exceeds $2n$.

\subsection{Main result and strategy}

The rule of thumb of the threshold H\"older exponent in local versions of the Nash-Kuiper convex integration scheme is as follows: the metric error is removed iteratively in stages. Each stage consists of several (fixed finite number, say, $N$) steps, and each step consists of adding a high-frequency perturbation (e.g. a spiral as in \cite{Nash:1954vt} or a corrugation as in \cite{Kuiper:1955th}, \cite[Lemma 4.4]{CoDeSz2012} or \cite[Proposition 2.4]{DISz2018}). The number of steps in a stage will determine the threshold H\"older exponent: $\theta_c=(1+2N)^{-1}$. This is explained in detail in e.g.  \cite[Section 3]{CoDeSz2012}. Following the observation that simple spirals/corrugations are able to remove a rank-one component of the metric error (called a \emph{primitive metric}), Nash used a decomposition of the metric error into a sum of primitive metrics \cite[Lemma 1]{Nash:1954vt} (see also \cite[Lemma 5.2]{CoDeSz2012} and Lemma \ref{le-decompose} below), one sees that a natural guess for the number of steps $N$ in low co-dimension ($m=n+1$) is $N=n_*$, the dimension of the space of $n\times n$ symmetric matrices. This leads to the H\"older exponent $\theta<(1+2n_*)^{-1}$. More generally, if the codimension is higher, e.g.~$m=n+k$, one can hope to remove $k$ primitive metrics in a single step, leading to $N=\lceil\tfrac{n_*}{k}\rceil$ steps. This idea was observed already by K\"allen \cite{Kallen1978}, using Nash spirals instead of corrugations, and recently also in the context of the (closely related) Monge-Ampere equation by M.~Lewicka \cite{lewicka2022}. In particular, in our setting $k=n$, and indeed, our main result is to achieve the threshold H\"older exponent $\theta<(1+2n_*/n)^{-1}=(2+n)^{-1}$ in the context of global isometric embeddings. 

\begin{theorem}[Main result] \label{th-n-2n}
Let $(\mathcal{M}^n,g)$ be any $n$-dimensional smooth compact manifold with $C^{1}$ metric. For any $\theta<\theta(n)$ with
\[\displaystyle\theta(n):=\begin{cases} \dfrac13,\,\, n=2,\\
\dfrac1{n+2},\,\, n\geq3,\end{cases}\]
there exist infinitely many $C^{1, \theta}$ isometric embeddings of $(\mathcal{M}^n, g)$ into $\R^{2n}$.
\end{theorem}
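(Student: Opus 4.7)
The plan is to run a Nash--Kuiper style convex integration scheme, exploiting the codimension $k=n$ of the target space $\R^{2n}$ to reduce the number of \emph{steps} per \emph{stage}, and then to globalize the resulting local construction using the authors' gluing technique from \cite{CaoSze2022}. First, by Whitney's strong embedding theorem there is a smooth embedding $u_0:\mathcal{M}^n\hookrightarrow\R^{2n}$, which after rescaling may be assumed strictly short, i.e.\ $h_0:=g-u_0^\sharp e$ is a positive symmetric $2$-tensor. The aim is to build a sequence $\{u_q\}$ with frequencies $\lambda_q=\lambda_0^{a^q}$ and amplitudes $\mu_q\sim\lambda_q^{-\theta}$ (for some super-exponential $a>1$) such that the metric errors $h_q=g-u_q^\sharp e$ decay geometrically and $u_q\to u$ in $C^{1,\theta'}$ for every $\theta'<\theta(n)$.

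The local engine of the iteration is the \emph{stage}, decomposed into $N$ steps. Using Nash's decomposition, one writes, on each chart, $h_q=\sum_{\ell=1}^{n_*}a_\ell^2\,\nu_\ell\otimes\nu_\ell$ as a sum of $n_*=n(n+1)/2$ primitive metrics. Since the codimension is $k=n$, a single step can absorb $n$ primitive metrics simultaneously by introducing $n$ mutually orthogonal corrugations or Nash spirals transverse to the current approximate embedding; this collapses the stage to $N=\lceil n_*/n\rceil$ steps, and the standard parameter balancing $\lambda_{q+1}\mu_{q+1}^{2N}\sim 1$ yields the threshold exponent $\theta_c=(1+2n_*/n)^{-1}=1/(n+2)$, which handles $n\geq 3$. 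For the exceptional case $n=2$, one instead uses the sharper $2$D corrugation of \cite{DISz2018} whose quadratic self-interaction term is of higher order in $\mu_{q+1}$: this permits handling all three primitive metrics of a $2\times 2$ error in a single step $(N=1)$ when codimension two is available, which after the same parameter balance gives $\theta(2)=1/3$.

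The second ingredient is globalization, which is the main technical obstacle. A naive partition-of-unity gluing over a finite atlas of $\mathcal{M}$ forces one to treat chart-overlap errors as additional metric defects, which inflates the effective step count to roughly $(n+1)N$ and collapses the exponent to something like $(1+2(n+1)n_*)^{-1}$, as in \cite{CoDeSz2012,Szek2014:LN}. I would instead adopt the inductive-on-chart scheme of \cite{CaoSze2022}: at each stage one processes the charts one at a time in a small-amplitude regime, each time modifying $u_q$ only on one chart, and uses cutoffs together with a careful mollification of the residual metric near the chart boundary so that the perturbation merges smoothly with the previous embedding. The crucial feature is that the mismatch on overlaps is accounted for as a \emph{higher-order absorbed error}, bounded by $\mu_{q+1}^2$ rather than $\mu_{q+1}$, so that it is reabsorbed into the next stage's primitive-metric decomposition without creating an extra step.

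The hard part will be making the small-amplitude requirement of the gluing compatible with the step count $N=\lceil n_*/n\rceil$ (or $N=1$ when $n=2$) that is dictated by the codimension gain. Concretely, one must choose $\mu_{q+1}$ and $\lambda_{q+1}$ so that the second-derivative bound $\|u_{q+1}\|_{C^2}\lesssim\lambda_{q+1}\mu_{q+1}$ coexists with both the metric estimate $\|h_{q+1}\|_{C^0}\lesssim\mu_{q+1}^2+\lambda_{q+1}^{-1}\|u_q\|_{C^2}$ and the higher-order gluing error of order $\mu_{q+1}^2$; this system must close under $\lambda_q=\lambda_0^{a^q}$, $\mu_q=\lambda_q^{-\theta}$ for every $\theta<\theta(n)$. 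Provided this balance is achieved, $u_q$ converges in $C^{1,\theta}$ by a telescoping sum. The statement that there exist \emph{infinitely many} such embeddings is then standard: at any stage one has an open set of admissible corrugation phases and directions, and distinct choices produce embeddings whose $C^0$ distance persists in the limit.
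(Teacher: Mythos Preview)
Your outline captures the broad architecture but contains a genuine gap for even $n\geq 4$ and misidentifies the mechanism in two other places.

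\textbf{The even-dimensional gap.} You write $N=\lceil n_*/n\rceil$ and then compute the threshold as $(1+2n_*/n)^{-1}=1/(n+2)$. These are inconsistent: when $n$ is even, $n_*/n=(n+1)/2$ is a half-integer, so $\lceil n_*/n\rceil=n/2+1$ and the actual threshold from $N$ integer steps is $(1+2(n/2+1))^{-1}=1/(n+3)$, not $1/(n+2)$. The paper flags this explicitly as the main obstacle and resolves it by a different first step: instead of a corrugation, one uses Nash spirals for the first $n/2$ primitive metrics and applies a perturbed decomposition (Lemma~\ref{le-perturb}, after K\"allen) in which the coefficients $a_i$ are chosen to absorb the linear and bilinear error terms $\Lambda_k,\Theta_{kl}$ generated by that step. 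This allows the first step to run at intermediate frequency $\lambda^{(\kappa+1)/2}$ rather than $\lambda^{\kappa}$, so that the remaining $n^2/2$ primitives are handled in $n/2$ further steps and the effective exponent is $N=(n+1)/2$ as if $n_*/n$ were an integer. Your proposal has no mechanism for this.

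\textbf{The $n=2$ mechanism.} The improvement to $\theta(2)=1/3$ does not come from a ``sharper corrugation whose quadratic self-interaction is higher order.'' It comes from the isothermal-coordinate decomposition (Lemma~\ref{le-conformal}): in dimension two any positive metric can be written as $a^2(\nabla\Phi_1\otimes\nabla\Phi_1+\nabla\Phi_2\otimes\nabla\Phi_2)$, i.e.\ a sum of \emph{two} primitive metrics rather than three. With codimension $2$ one then removes both in a single step (Proposition~\ref{pr-step} with $m-n=2$), giving $N=1$ and $\theta_c=1/3$.

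\textbf{Globalization.} The scheme of \cite{CaoSze2022} is not ``inductive on charts'' with overlap errors absorbed as $\mu_{q+1}^2$. It is an induction on the dimension of skeleta of a fixed triangulation, using the notion of \emph{adapted short embedding} (Definition~\ref{de-adapt}): the metric error is written as $\rho^2(g+h)$ with $\rho$ vanishing on the current skeleton, and Proposition~\ref{pr-inductive} extends the zero set of $\rho$ from one skeleton to the next. The additional $\delta^2$ error in Proposition~\ref{pr-stage} is handled by the parameter condition \eqref{e:alpha*beta}, not by chart-overlap bookkeeping.
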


Our strategy for proving Theorem \ref{th-n-2n} follows the basic framework developed in \cite{CoDeSz2012} together with the local-to-global scheme via adapted short maps developed in \cite{CaoSze2022}. 

We remark that the ``rule of thumb'' explained above indeed works in a rigorous implementation, with minor modifications compared to \cite{CoDeSz2012,CaoSze2022} \emph{in the case when $n$ is odd}, because in this case $n_*/n$ is an integer. Furthermore, the case $n=2$ again relies on the idea involving conformal coordinates introduced in \cite{DISz2018}. We also remark in passing that using a very similar strategy one can also show, for any $n\geq 3$, the existence of $C^{1,\theta}$-isometric embeddings $u:(\mathcal{M}^n,g)\rightharpoonup \R^{n+n_*}$ for any $\theta<1/3$. 

However, if $n\geq 4$ is even, $n_*/n$ is not an integer, and using the naive strategy sketched above would lead to $N=\lceil\tfrac{n_*}{n}\rceil=n/2+1$ and the lower H\"older exponent $(3+n)^{-1}$. Instead, we use an idea, originating from K\"allen \cite{Kallen1978} and used in \cite{DI2020,CaoIn2020}, of absorbing certain higher order error terms in the metric decomposition at each stage, to obtain the H\"older exponent $(2+n)^{-1}$.

The rest of the paper is organized as follows. Outline of the construction is given in Section \ref{se-outline}, where  Theorem \ref{th-n-2n} will be proved assuming the main Propositions  \ref{pr-stage} and \ref{pr-inductive}. Section \ref{se-stage} and Section \ref{se-inductive} are devoted to show Proposition \ref{pr-stage} and Proposition \ref{pr-inductive} respectively.
%%%
%%%%%%%%%%%%%%%
\section{Outline of the proof}\label{se-outline}
%%%%%%%%%%

The proof of Theorem \ref{th-n-2n} proceeds using a variant of the iteration scheme introduced by Nash and Kuiper \cite{Nash:1954vt,Kuiper:1955th} for the construction of $C^1$ isometric embeddings, more precisely on the quantitative version of the iteration as it has been developed in a series of works \cite{Kallen1978, Borisov:2004wo, CoDeSz2012, DISz2018,CaoSze2019, DI2020, CaoSze2022}. In particular, we will follow the strategy developed in \cite{CaoSze2022} for extending the local H\"older scheme to global embeddings. 

\subsection{Local corrections of the metric}
The heart of the matter of the Nash-Kuiper scheme is an iteration procedure which corrects the metric error in a single chart by adding high-frequency perturbations (``spirals'' in \cite{Nash:1954vt}, ''corrugations'' in \cite{Kuiper:1955th}). The key point is that addition of a single corrugation can correct the metric error (upto a new error of arbitrarily small size) in a rank-one direction - called a ``primitive metric''.  The basic decomposition, based on Nash's work, involves $n_*=n(n+1)/2$ terms:
\begin{lemma}\label{le-decompose}
Let $P_0\in \R^{n\times n}_{sym,+}$. There exists a constant $\sigma_0=\sigma_0(n)>0$ and vectors $\xi_1,\dots,\xi_{n_*}\in \mathbb S^{n-1}$ and linear maps $L_i:\R^{n\times n}_{sym}\to\R$ such that 
$$
P=\sum_{i=1}^{n_*}L_i(P)\xi_i\otimes \xi_i
$$
and moreover $L_i(P)$ is bounded from below for every $i$ and for every $P\in \R^{n\times n}_{sym}$ with $|P-P_0|\leq \sigma_0$.
\end{lemma}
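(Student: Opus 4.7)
The plan is to construct, for the given $P_0\in \R^{n\times n}_{sym,+}$, a basis of $\R^{n\times n}_{sym}$ of the form $\{\xi_i\otimes\xi_i\}_{i=1}^{n_*}$ with $\xi_i\in\mathbb{S}^{n-1}$ in which $P_0$ has strictly positive coordinates. The $L_i$ are then defined as the associated linear coordinate functionals on $\R^{n\times n}_{sym}$; continuity of these linear functionals on the finite-dimensional space, combined with strict positivity at $P_0$, delivers the required uniform lower bound on a neighborhood $|P-P_0|\leq \sigma_0$.

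First I would reduce to the case $P_0=I$ via a Cholesky-type factorization $P_0=AA^T$ with $A\in GL_n$. The linear bijection $B\mapsto A^{-1}B(A^{-1})^T$ of $\R^{n\times n}_{sym}$ maps $\xi\otimes\xi$ with $\xi\in\mathbb{S}^{n-1}$ to a positive multiple of $\zeta\otimes\zeta$ with $\zeta\in\mathbb{S}^{n-1}$, so it suffices to produce unit vectors $\zeta_1,\dots,\zeta_{n_*}\in\mathbb{S}^{n-1}$ such that $\{\zeta_i\otimes\zeta_i\}$ is linearly independent and $I=\sum_i c_i\zeta_i\otimes\zeta_i$ with every $c_i>0$. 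Setting $\xi_i:=A\zeta_i/\|A\zeta_i\|$ and running the bijection in reverse then yields
\begin{equation*}
P_0 \;=\; \sum_{i=1}^{n_*} c_i\|A\zeta_i\|^2\,\xi_i\otimes\xi_i,
\end{equation*}
with all coefficients strictly positive, and $\{\xi_i\otimes\xi_i\}$ is still a basis by the bijectivity of $B\mapsto ABA^T$.

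For the reference collection $\{\zeta_i\}$ my proposal is to use a \emph{tight frame}, i.e.\ a set of $n_*$ unit vectors satisfying $\sum_i\zeta_i\otimes\zeta_i=\tfrac{n_*}{n}I$, chosen in sufficiently general position to ensure linear independence of the rank-one tensors; this automatically gives $c_i=n/n_*>0$ for every $i$. For $n=2$ the three vertices of an equilateral triangle inscribed in $\mathbb{S}^1$ provide such a configuration explicitly (a direct computation gives $\sum\zeta_i\otimes\zeta_i=\tfrac{3}{2}I$, and linear independence is immediate). For $n\geq 3$ one can appeal to standard tight-frame constructions, or alternatively perturb any basis of rank-one matrices slightly to kill the degenerate vanishing of coordinates of $I$.

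Defining the $L_i:\R^{n\times n}_{sym}\to\R$ as the coordinate functionals of the basis $\{\xi_i\otimes\xi_i\}$, strict positivity $L_i(P_0)=c_i\|A\zeta_i\|^2>0$ together with continuity of linear maps on finite-dimensional spaces yields $\sigma_0>0$ and $\eta>0$, depending on $P_0$ and $n$, such that $L_i(P)\geq\eta$ for every $P\in\R^{n\times n}_{sym}$ with $|P-P_0|\leq\sigma_0$. I expect the main obstacle to be precisely the construction of the reference configuration $\{\zeta_i\}$: showing the existence of $n_*$ unit vectors in $\R^n$ in sufficiently general position whose rank-one tensors form a tight frame (or, more weakly, a basis in which $I$ has strictly positive coordinates) is the only non-trivial combinatorial/geometric input. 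Low-dimensional cases are handled by direct inspection, and the general case follows from classical frame theory or a generic perturbation argument.
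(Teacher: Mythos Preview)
The paper does not actually supply its own proof of this lemma; it is stated as a known decomposition result, attributed to Nash \cite[Lemma 1]{Nash:1954vt} and \cite[Lemma 5.2]{CoDeSz2012}. So there is no ``paper's proof'' to compare against beyond those references.

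Your argument is correct and follows the standard strategy of those references: exhibit a basis $\{\xi_i\otimes\xi_i\}_{i=1}^{n_*}$ of $\R^{n\times n}_{sym}$ in which $P_0$ has strictly positive coordinates, take the $L_i$ to be the dual coordinate functionals, and invoke continuity. The Cholesky reduction to $P_0=I$ is a clean simplification. The one place where your write-up is still a sketch is the existence, for general $n$, of $n_*$ unit vectors whose rank-one squares are both linearly independent and sum to a multiple of $I$. Your tight-frame suggestion is one route; a slightly more direct argument, closer to Nash's, is to note that
\[
I \;=\; c_n \int_{\mathbb{S}^{n-1}} (\xi\otimes\xi)\,d\sigma(\xi)
\]
places $I$ in the \emph{interior} of the convex cone generated by $\{\xi\otimes\xi:\xi\in\mathbb{S}^{n-1}\}$, and then Carath\'eodory (plus padding with small positive multiples of additional rank-one tensors if fewer than $n_*$ are produced) yields the desired basis with all coefficients positive. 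Either way the conclusion is the same.

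One minor point worth flagging: your $\sigma_0$ visibly depends on $P_0$ through $\|A\|,\|A^{-1}\|$, whereas the lemma as stated asserts $\sigma_0=\sigma_0(n)$. This is a slight imprecision in the lemma's formulation rather than in your argument; in the paper's applications $P_0$ always ranges over a fixed compact set determined by the constant $\gamma$ in \eqref{e:gamma0}, which is how the uniformity is recovered.
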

We will use certain variants of this decomposition, see Lemma \ref{le-perturb}.

When $n=2$, we can decompose the metric error into only two primitive metrics.
\begin{lemma}\label{le-conformal}
%For any $0<\alpha_1\leq\alpha<1,$ $N\in\mathbb{N}$, there exists constants $C(\alpha, \alpha_1, N)$, $\sigma_0(\alpha, \alpha_1, N)$ and $C(\alpha)$ such that the following statement holds.
Let $\Omega\subset\R^2$ be a simply connected open bounded set with smooth boundary and $P:\Omega\to \R^{2\times 2}_{sym,+}$ such that, for some $0<\alpha<1$ and $\gamma,M\geq 1$
\begin{equation}\label{eq-p-condition}
{\gamma}^{-1}\mathrm{Id}\leq P\leq \gamma\mathrm{Id},\quad \|P\|_{C^\alpha(\Omega)}\leq M.
\end{equation}
Then there exists a smooth diffeomorphism $\Phi:\Omega\to\R^2$ and a smooth positive function $a:\Omega\to\R$ satisfying
\begin{equation}\label{eq-isothermal}
P=a^2(\nabla\Phi_1\otimes\nabla\Phi_1+\nabla\Phi_2\otimes\nabla\Phi_2),
\end{equation}
and the following estimates
\begin{equation*}
\begin{split}
&\det(D\Phi(x))\geq C_0^{-1},\quad a(x)\geq C_0^{-1}\quad \textrm{ for all }x\in\Omega,\\
&\|a\|_{C^{j,\alpha}(\Omega)}+\|\nabla\Phi\|_{C^{j,\alpha}(\Omega)}\leq C_j\|P\|_{C^{j, \alpha}(\Omega)}, j\in\mathbb{N},
\end{split}
\end{equation*}
where the constants $C_j\geq 1$ depend only on $\alpha, \gamma, M$ and on $\Omega$.
\end{lemma}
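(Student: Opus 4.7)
Lemma \ref{le-conformal} is a quantitative version of the classical existence of isothermal coordinates for a $C^\alpha$ Riemannian metric on a simply connected planar domain; I would derive it from the theory of the Beltrami equation.

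First, I would recast \eqref{eq-isothermal} in complex form. Writing $P = \begin{pmatrix} E & F \\ F & G \end{pmatrix}$ and $z = x_1 + i x_2$, a direct computation produces the factorisation $P = \lambda\,|dz + \mu\,d\bar{z}|^2$ (viewed as symmetric $2$-tensors), with
\[
\lambda \;=\; \tfrac{1}{4}\bigl(E + G + 2\sqrt{EG - F^2}\bigr), \qquad \mu \;=\; \frac{(E-G) + 2iF}{E + G + 2\sqrt{EG - F^2}}.
\]
The ellipticity bound $\gamma^{-1}\mathrm{Id} \leq P \leq \gamma\,\mathrm{Id}$ forces $\|\mu\|_\infty \leq (\gamma-1)/(\gamma+1) =: k < 1$ and $\lambda \geq c(\gamma) > 0$, while $\mu$ inherits the $C^{j,\alpha}$ regularity of $P$ with norms controlled by $\|P\|_{C^{j,\alpha}(\Omega)}$, up to constants depending only on $\gamma$.

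Next I would solve the Beltrami equation $\partial_{\bar{z}} w = \mu\,\partial_z w$ on $\Omega$ via the measurable Riemann mapping theorem (Ahlfors--Bers): since $\Omega$ is simply connected with smooth boundary, there exists a $k$-quasiconformal homeomorphism $w : \Omega \to \mathbb{D}$ solving this equation, unique up to a M\"obius automorphism of the disk. Setting $\Phi := (\mathrm{Re}\,w, \mathrm{Im}\,w)$, the identity $dw = (\partial_z w)(dz + \mu\,d\bar{z})$ then gives
\[
(D\Phi)^T D\Phi \;=\; |\partial_z w|^2\,|dz + \mu\,d\bar{z}|^2 \;=\; \frac{|\partial_z w|^2}{\lambda}\,P,
\]
so the choice $a := \sqrt{\lambda}/|\partial_z w|$ produces exactly the decomposition \eqref{eq-isothermal}.

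For the estimates, Schauder theory applied iteratively to the elliptic first-order system $\partial_{\bar{z}} w - \mu\,\partial_z w = 0$ with $\mu \in C^{j,\alpha}$ yields $w \in C^{j+1,\alpha}(\overline{\Omega})$ together with $\|\nabla w\|_{C^{j,\alpha}(\Omega)} \leq C_j\,\|P\|_{C^{j,\alpha}(\Omega)}$, from which the stated upper bounds on $a$ and $\nabla \Phi$ are immediate. For the pointwise lower bound one uses
\[
\det D\Phi \;=\; |\partial_z w|^2 - |\partial_{\bar{z}} w|^2 \;\geq\; (1-k^2)\,|\partial_z w|^2
\]
combined with a Mori-type distortion estimate for the quasiconformal map $w$, which produces a uniform lower bound $|\partial_z w| \geq c(\gamma,\Omega) > 0$, hence $\det D\Phi \geq C_0^{-1}$; the lower bound on $a$ then follows from this together with $\lambda \geq c(\gamma)$ and the Schauder upper bound on $|\partial_z w|$. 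The main technical point is precisely this quantitative nondegeneracy of $w$: the existence of isothermal coordinates is classical (Chern; Korn--Lichtenstein), but extracting the explicit dependence of the constants on $\alpha, \gamma, M$ and on $\Omega$ forces one to go through the Schauder and distortion theorems quantitatively rather than merely qualitatively.
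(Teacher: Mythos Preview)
The paper does not actually prove Lemma~\ref{le-conformal}; it is stated as a known input, with the underlying idea of using conformal (isothermal) coordinates attributed in the introduction to \cite{DISz2018}. So there is no in-paper argument to compare against.

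Your approach via the Beltrami equation is the standard route to quantitative isothermal coordinates and is essentially correct. The identification of the Beltrami coefficient $\mu$ and the conformal factor $\lambda$ from $P$, the bound $\|\mu\|_\infty\le(\gamma-1)/(\gamma+1)<1$, the relation $(D\Phi)^TD\Phi=|\partial_z w|^2\,|dz+\mu\,d\bar z|^2$, and the choice $a=\sqrt{\lambda}/|\partial_z w|$ are all correct, and the Schauder bootstrap for $\partial_{\bar z}w=\mu\,\partial_z w$ gives the stated upper bounds on $\|\nabla\Phi\|_{C^{j,\alpha}}$ and $\|a\|_{C^{j,\alpha}}$.

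The one step that needs sharpening is the pointwise lower bound on $|\partial_z w|$. A Mori-type theorem controls the modulus of continuity of a $k$-quasiconformal map, not a pointwise lower bound on its gradient; quasiconformal maps with merely bounded $\mu$ need not have Jacobian bounded away from zero. The clean way to close this is to apply the same Schauder theory to the inverse map: $w^{-1}:\mathbb D\to\Omega$ is again $k$-quasiconformal, and once you know $w\in C^{1,\alpha}(\overline\Omega)$ with controlled norm, the Beltrami coefficient of $w^{-1}$ is $C^\alpha$ with norm depending only on $\gamma,M,\alpha,\Omega$, so Schauder gives $\|\nabla(w^{-1})\|_{C^\alpha(\mathbb D)}\le C$. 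This upper bound on $|\nabla(w^{-1})|$ translates into the desired lower bound on $\det D\Phi=(1-|\mu|^2)|\partial_z w|^2$, and then on $a$. With this adjustment your argument is complete and matches what one finds in the reference \cite{DISz2018}.
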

After decomposing the metric error into a sum of primitive metrics, we can remove each primitive metric error term by adding corrugations in an appropriate number of successive steps (c.f.~\cite[Corollary 3.1 and 3.2]{CaoSze2022}).

\begin{proposition}[Stage]\label{pr-stage}
There exists $\sigma_0=\sigma_0(n)>0$ with the following properties. Let $G$ be a $C^1$ metric on $\Omega\subset\R^n$ with ${\gamma}^{-1}\mathrm{Id}\leq G\leq \gamma\mathrm{Id}$, $\|G\|_1\leq \gamma$ for some $\gamma\geq 1$ and such that $\textsf{osc}_\Omega G\leq \sigma_0$. Let the embedding $u\in C^2(\Omega; \R^{2n})$, $\rho\in C^1(\Omega)$, and $H\in C^1(\Omega;\R^{n\times n}_{sym})$ satisfy
\begin{equation}\label{eq-stage-u}
{\gamma}^{-1}\mathrm{Id}\leq \nabla u^T\nabla u\leq\gamma \mathrm{Id}\,\textrm{ in }\Omega,\quad
\|u\|_{2}\leq \delta^{1/2}\lambda,
\end{equation}
and
\begin{align}
\|\rho\|_0\leq \delta^{1/2}, &\quad \|\rho\|_1\leq \delta^{1/2}\lambda, \label{eq-stage-rho}\\ %\label{e:rho-assumption}\\
\|H\|_0\leq \lambda^{-\alpha}, &\quad \|H\|_1\leq \lambda^{1-\alpha}\,, \label{eq-stage-H}%\label{e:H-assumption}
\end{align}
for some $0<\delta<1$, $\alpha>0$ and $\lambda>1$. Then, for any $\kappa> 1$ there exist constants $\delta_*=\delta_*(\gamma, n)>0$ and $\lambda_*=\lambda_*(\kappa, \alpha, \gamma, n)>1$ such that if
\begin{equation}\label{eq-stage-lambda0}
\delta\leq\delta_*, \quad \lambda\geq\lambda_*,
\end{equation}
then, there exists a new embedding $v\in C^2(\Omega; \R^{2n})$ and $\mathcal{E}\in C^1(\Omega;\R^{n\times n}_{sym})$ such that
\begin{equation}\label{eq-stage-1}
\begin{split}
 \nabla v^T\nabla v=&\nabla u^T\nabla u+\rho^2(G+H)+\mathcal{E}\quad\textrm{ in }\Omega,\\
v=&u\textrm{ on }\Omega\setminus(\supp\rho+\B_{\lambda^{-\kappa}}(0))
\end{split}
\end{equation}
with estimates
\begin{align}
\|v-u\|_0\leq C\delta^{1/2}\lambda^{-(\kappa+1)/2}, \quad 
\|v-u\|_1\leq C\delta^{1/2}, \quad
\|v\|_2\leq C\delta^{1/2}\lambda^{N(\kappa-1)+1}, \label{eq-stage-2}
\end{align}
and
\begin{equation}\label{eq-stage-3}
\|\mathcal{E}\|_0\leq C(\delta\lambda^{1-\kappa}+\delta^2), \quad
\|\mathcal{E}\|_1\leq C\left(\delta\lambda^{(N-1)(\kappa-1)+1}+
\delta^2\lambda^{N(\kappa-1)+1}\right),
\end{equation}
where $N=\frac{1-\theta(n)}{2\theta(n)}$.
The constant $C\geq 1$ depends only on $\alpha$ and $\gamma$.
\end{proposition}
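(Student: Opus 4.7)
The plan is to construct $v$ as the endpoint of a sequence $u=u_0,u_1,\dots,u_M=v$ of intermediate embeddings, where each $u_{i+1}$ is obtained from a mollification of $u_i$ by adding a single high-frequency perturbation (Nash spiral or Kuiper corrugation) in the $n$-dimensional normal bundle to $u_i$. The frequencies $\mu_i$ are chosen geometrically so that $\mu_M=\lambda^{N(\kappa-1)+1}$ matches the target $C^2$ bound in \eqref{eq-stage-2}, and so that $\mu_1\geq\lambda^\kappa$ to enforce the support constraint $v=u$ on $\Omega\setminus(\supp\rho+\B_{\lambda^{-\kappa}}(0))$. At each step one cancels a block of up to $n$ primitive rank-one tensors $a_{i,k}^2\,\xi_{i,k}\otimes\xi_{i,k}$ taken from a decomposition of the remaining metric gap $\rho^2(G+H)$, produced by Lemma \ref{le-decompose} when $n\geq 3$ and by the conformal splitting of Lemma \ref{le-conformal} when $n=2$. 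Because the codimension equals $n$, up to $n$ primitive terms can be cancelled simultaneously in a single step via $n$ orthogonal corrugations along the normal frame.

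The distribution of the $n_*$ primitive terms among the $M$ steps depends on the parity of $n$. For $n$ odd, $n_*/n=(n+1)/2=N$ is an integer, so the decomposition splits evenly into $N$ groups of $n$, giving $M=N$ steps each with frequency ratio $\mu_{i+1}/\mu_i=\lambda^{\kappa-1}$. For $n=2$, Lemma \ref{le-conformal} yields only two primitive terms, which fit into a single step ($M=N=1$) using both normal directions. The delicate case is $n\geq 4$ even, where $n_*/n=(n+1)/2$ is a half-integer and a naive implementation with $\lceil n_*/n\rceil=n/2+1$ standard steps would yield the inferior exponent $(n+3)^{-1}$. To recover $N=(n+1)/2$ I plan to use the K\"allen-type absorption trick from \cite{Kallen1978,DI2020,CaoIn2020}: in the final step, the $n/2$ remaining primitive terms are cancelled using only $n/2$ of the normal directions, while the other $n/2$ directions are devoted to absorbing the leading-order quadratic self-interaction of that corrugation, which would otherwise generate an error the size of an additional step. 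This absorption allows the final step to be performed with the reduced frequency ratio $\lambda^{(\kappa-1)/2}$, so that $\mu_M=\lambda^{N(\kappa-1)+1}$ as required.

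Each individual step will be realized by a single-step proposition (the content of Section \ref{se-inductive}, modelled on \cite[Corollary 3.1-3.2]{CaoSze2022} but in codimension $n$): given $u_i\in C^2$ with $\|u_i\|_2\lesssim\delta^{1/2}\mu_{i-1}$, a normal frame $\{\zeta_{i,k}\}_{k=1}^{n}$ to a mollification of $u_i$ at scale $\mu_i^{-1}$, and primitive coefficients $a_{i,k}\in C^1$ with $\|a_{i,k}\|_0\lesssim\delta^{1/2}$ and $\|a_{i,k}\|_1\lesssim\delta^{1/2}\lambda$, it produces $u_{i+1}=u_i+\text{(corrugation)}$ with
\begin{equation*}
\nabla u_{i+1}^T\nabla u_{i+1}=\nabla u_i^T\nabla u_i+\sum_{k}a_{i,k}^2\,\xi_{i,k}\otimes\xi_{i,k}+E_i,
\end{equation*}
with displacement $\lesssim\delta^{1/2}\mu_i^{-1}$, $C^1$-perturbation $\lesssim\delta^{1/2}$, $C^2$-norm $\lesssim\delta^{1/2}\mu_i$, and residual $\|E_i\|_0\lesssim\delta\mu_{i-1}/\mu_i+\delta^2$, $\|E_i\|_1\lesssim\delta\mu_{i-1}+\delta^2\mu_i$. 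The standing hypothesis $\osc_\Omega G\leq\sigma_0$ keeps the decomposition from Lemma \ref{le-decompose} stable under the $O(\delta)$ perturbations accumulated along the iteration, and the smallness $\|H\|_0\leq\lambda^{-\alpha}$ allows $\rho^2H$ to be folded into the target of the first step.

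The principal obstacle I expect is the bookkeeping inside the even-$n$ absorption: verifying that the specific quadratic cross-term produced by the final-step corrugation can itself be decomposed as $\sum_k b_k^2\,\eta_k\otimes\eta_k$ for primitive directions $\eta_k$ realizable in the remaining $n/2$ normal directions, and that this non-standard final step still respects the $C^2$ bound and the support property. Once the single-step proposition is in hand, the stage estimates follow by direct iteration: the displacement bounds $\|v-u\|_0$ and $\|v-u\|_1$ telescope and are dominated by the first step; the $C^2$ norm is dominated by the last step and equals $\delta^{1/2}\lambda^{N(\kappa-1)+1}$; the support of $v-u$ lies in $\supp\rho+\B_{\mu_1^{-1}}(0)\subset\supp\rho+\B_{\lambda^{-\kappa}}(0)$; and the accumulated residual $\mathcal{E}=\sum_i E_i$ is dominated by the last two indices $i=M-1,M$, producing exactly \eqref{eq-stage-3}.
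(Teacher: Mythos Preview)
Your outline for $n=2$ and odd $n\geq 3$ matches the paper (the single-step building block is Proposition~\ref{pr-step} in Section~\ref{se-stage}, not Section~\ref{se-inductive}).

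For even $n\geq 4$ your description of the K\"allen step diverges from the paper in both placement and mechanism, and as stated I do not see how it closes. In the paper the special step is the \emph{first} one, not the last: one adds only $n/2$ primitive metrics at the intermediate frequency $\lambda^\tau$, $\tau=(\kappa+1)/2$, using $n/2$ \emph{Nash spirals} (each spiral consuming a pair $(\zeta_k,\eta_k)$ of normals, so all $n$ normals are used and none are ``spare''). The remaining $n\cdot n/2$ primitives are then added in $n/2$ standard applications of Proposition~\ref{pr-step}. The absorption is not geometric but algebraic: the linear and bilinear error terms $\Lambda_k=2\,\mathrm{sym}(\nabla u^TA_k)+\cdots$ and $\Theta_{kl}$ generated by the first-step spirals are fed into the perturbed decomposition Lemma~\ref{le-perturb}, which solves
\[
G+H=\sum_{i=1}^{n_*}a_i^2\,\xi_i\otimes\xi_i+\sum_{k=1}^{n/2}a_k\psi(\rho)\Lambda_k+\sum_{k,l=1}^{n/2}a_ka_l\Theta_{kl}
\]
implicitly for all the $a_i$ at once. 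Since the spiral amplitudes $b_k=\rho a_k$ then satisfy this identity by construction, the first-step metric residual reduces to a mollification error of size $\delta\lambda^{1-\kappa}$, and that step costs only a half-increment $\lambda^{(\kappa-1)/2}$ in frequency.

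Your version---placing the half-step last and using ``spare normals'' to cancel a quadratic self-interaction---faces two obstacles. First, all $n_*$ coefficients must be fixed \emph{before} any step is performed (the later steps already use $\tilde b_k$ for $k>n/2$), so the implicit redefinition via Lemma~\ref{le-perturb} has to occur at the outset when the errors $\Lambda_k$ are governed by the small initial $\|u\|_2\leq\delta^{1/2}\lambda$; at the end the base map has $C^2$ norm $\delta^{1/2}\lambda^{1+\frac n2(\kappa-1)}$ and there is nothing left to absorb into. Second, your diagnosis of the obstruction is off: the term forcing the full frequency ratio $\lambda^{\kappa-1}$ at each step is the \emph{linear} error $\mathrm{sym}(\nabla u^TA_k)$ of size $\delta\nu/\mu$, not a quadratic self-interaction. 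The genuinely quadratic cross-terms $(B_k^{(1)})^TB_i^{(1)}$ in Proposition~\ref{pr-step} are already $O(\delta^2)$ (see \eqref{eq-bki1}) and are precisely the origin of the $\delta^2$ contributions in \eqref{eq-stage-3}; they are estimated directly, not absorbed.
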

\noindent Proposition \ref{pr-stage}, with the choice  $N=\frac{1-\theta(n)}{2\theta(n)}$, is the key ingredient for obtaining the threshold H\"older exponent $\theta(n)$ in Theorem \ref{th-n-2n}, and is, apart from the different choice of $N$, a close analogue of Corollaries 3.1--3.2 in \cite{CaoSze2022}. Indeed, a close inspection shows that the only other difference appears in the additional $\delta^2$ terms in \eqref{eq-stage-3}, but it turns out that this additional term is lower order. We remark in passing that, even though in our introduction $N$ was motivated as the number of steps in a stage, here $N$ does not need to be an integer. The proof of Proposition \ref{pr-stage} will be given in Section \ref{se-stage} and is the main new contribution of this work.

\subsection{Global Setup}\label{se-setup}

Throughout the paper we consider a compact Riemannian manifold $(\mathcal{M}, g)$ with a $C^1$ metric $g$. We fix a finite atlas $\{\Omega_k\}_k$ of $\mathcal{M}$ with charts $\Omega_k$ and a corresponding partition of unity $\{\phi_k\}$ so that
$$
\sum \phi_k^2=1 \text{ and }\phi_k\in C_c^{\infty}(\Omega_k).
$$
Furthermore, on each $\Omega_k$ we fix a choice of coordinates and in this way identify $\Omega_k$ with a bounded open subset of $\R^n$. We write $g=(G_{ij})$ in any local chart $\Omega_k$ and the same way denote any symmetric 2-tensor $h$ in local charts by $(H_{ij})$. Then, for any $x\in\Omega_k$, $G(x),H(x)\in \R^{n\times n}_{sym}$, the set of $n\times n$ symmetric matrices. In particular, if $u:\mathcal{M}\to\R^m$ is a $C^1$ map, the induced (pullback) metric on $\mathcal{M}$, denoted by $u^\sharp e$, can be written as $\nabla u^T\nabla u$ in any local chart. 

Observe that, since $g$ is a (non-degenerate) metric of class $C^1$ and $\mathcal{M}$ is compact, there exists a constant $\gamma\geq 1$ such that, in any local chart $\Omega_k$, we have
\begin{equation}\label{e:gamma0}
{\gamma}^{-1}\textrm{Id}\leq G\leq\gamma\textrm{Id},\quad \|G\|_{C^1(\Omega_k)}\leq \gamma.
\end{equation}
By refining the charts if necessary, we may ensure in addition that 
\begin{equation}\label{e:oscillation}
\textrm{osc}_{\Omega_k}G<\frac{1}{4}\sigma_0,
\end{equation}
where $\sigma_0>0$ is the dimensional constant in Lemma \ref{le-decompose}.

As usual, we define the supremum norm of maps $f:\mathcal{M}\to\R^m$ as $\|f\|_0=\sup_{x\in\mathcal{M}}|f(x)|$. Furthermore, we use the given atlas and associated partition of unity to define the H\"older norms: for any $r\geq 0$ we set
$$
[u]_r:=\sum_k[\phi_k^2u]_r.
$$
Similarly, we define ``mollification on $\mathcal{M}$"  through the partition of unity. That is to say, for a function $u$ on $\mathcal{M}$ we define
$$
u*\varphi_\ell=\sum_k (u\phi_k^2)*\varphi_\ell.
$$
Finally, we note that these definitions can be easily extended to symmetric 2-tensors $h$ on $\mathcal{M}$ using the pointwise norm given by the underlying metric $g$:
$$
|h(x)|=\sup_{\xi\in T_x\mathcal{M}, |\xi|_g=1}|h(\xi,\xi)|,
$$
where $|\xi|_g=(\sum_{ij}g_{ij}\xi_i\xi_j)^{1/2}$.
Note that because of \eqref{e:gamma0} this norm is equivalent to the matrix norm of $H(x)$ given by
$$
|H(x)|=\sup_{|\xi|=1}|H_{ij}(x)\xi_i\xi_j|
$$
In particular, given the $C^1$ metric $g$ on $\mathcal{M}$ (with the local representation $g=(G_{ij})$ in local charts $\Omega_k$), we may choose $\gamma_0$ from \eqref{e:gamma0} sufficiently large so that in addition
\begin{equation}\label{e:gamma1}
	\|g\|_1\leq \gamma_0.
\end{equation}

\subsection{Passing from local to global corrections}

In \cite{CaoSze2022} we introduced a procedure for combining the local convex integration steps into a global iteration without loss of regularity. It is based on the induction on the dimension of the skeleta of a given triangulation of the manifold $\mathcal{M}$ and carefully controlling the regularity of the approximating maps using the size of the metric error. We start by recalling the main concepts. 

First of all we recall \cite{Cairns1961} that any smooth compact manifold $(\mathcal{M}^n,g)$ admits a regular triangulation $\mathcal{T}$, where skeleta of $\mathcal{T}$ is composed with a finite union of $C^1$ submanifolds, and each simplex $T\in\mathcal{T}$ belongs to a single chart $\Omega_k$ for some $k$. Moreover, by compactness, the following regularity condition holds, whenever $S\subset \Sigma$ are the skeleta in $\mathcal{T}$ of consecutive dimension:
\begin{condition}\label{c:geometric}
There exists a geometric constant $\bar{r}>0$ such that
for any $\delta>0$ the set 
$$
\biggl\{x\in\mathcal{M}:\,\textrm{dist}(x,S)\geq \delta\textrm{ and }\textrm{dist}(x,\Sigma)\leq \bar{r}\delta\biggr\}
$$
is contained in a pairwise disjoint union of open sets, each contained in a single chart 
$\Omega_k$.
\end{condition}

In order to obtain an isometric map via convex integration with the required H\"older regularity, we need to keep track of the $C^2$ norm the approximating maps (c.f.~\eqref{eq-stage-u}) as well as the $C^1$ norm of the metric error (c.f.~\eqref{eq-stage-rho}-\eqref{eq-stage-H}). For maps with metric error which may be vanishing on some subset, the norms need to be related to the size of the metric error. This is formalized in the concept of adapted short embedding, introduced in \cite{DaneriSz2016} in the context of the Euler equations and in \cite{CaoSze2022} in the context of isometric immersions. Our definition here is slightly different from \cite[Definition 2.3]{CaoSze2022} in that we include additional smallness assumptions on the metric error using the exponents $\alpha,\beta$.
\begin{definition}\label{de-adapt} (\textit{Adapted short embedding})
 A short embedding $u:(\mathcal{M}^n, g)\hookrightarrow\R^m$ is called \emph{adapted short embedding with respect to $\Sigma$ with parameters $(\theta, \beta, \alpha, A)$} where $\theta, \beta, \alpha\in(0, 1), A\geq1$, % where $\Sigma\subset\mathcal{M}^n$ is a closed subset,
 if $u\in C^{1, \theta}(\mathcal{M}^n)$ satisfies
 \begin{equation}\label{eq-adapt-1}
 g-u^\sharp e=\rho^2(g+h)
 \end{equation}
 with $\Sigma=\{\rho=0\}$, $u\in C^2(\mathcal{M}^n\setminus \Sigma),\,\rho,h\in C^1(\mathcal{M}^n\setminus\Sigma)$ and the following estimates
\begin{equation}\label{eq-adapt}
\begin{split}
&|\nabla^2u(x)|\leq A\rho(x)^{1-\tfrac{1}{\theta}},\quad 0<\rho(x)\leq A^{-\beta}, \quad |h(x)|\leq A^{-\alpha\theta}\rho(x)^\alpha,\\
&\quad\quad |\nabla\rho(x)|\leq A\rho(x)^{1-\tfrac{1}{\theta}},\quad |\nabla h(x)|\leq A^{1-\alpha\theta}\rho(x)^{\alpha-\tfrac{1}{\theta}},
\end{split}
\end{equation}
%for some constant $A\geq 1$ and
for any $x\in\Omega_k\setminus \Sigma$. Here $\Omega_k$ is some chart of $\mathcal{M}^n$.
\end{definition}

Based on Proposition \ref{pr-stage} we have the following result, which is a variant of \cite[Proposition 4.1]{CaoSze2022}.

\begin{proposition}\label{pr-inductive}
Let $S\subset\Sigma$ be skeleta of the triangulation $\mathcal{T}$ of consecutive dimension satisfying condition \ref{c:geometric}. Let $$0<\theta<\frac{1}{1+2N},\quad 0<\beta<1.$$
There exists $c_*(N,\theta)>0$ such that, for any  
\begin{equation}\label{e:alpha*beta}
0<\alpha<c_*\beta 
\end{equation}
there exists a positive constant $A_*(N,\theta,\beta,\alpha)>1$ such that the following holds:

Let $u$ be an adapted short embedding $\mathcal{M}^n\hookrightarrow\R^{2n}$ with respect to $S$ and parameters $(\theta, \beta, \alpha, A).$  For any $A\geq A_*$, then there exists a new adapted short embedding $\bar u$ with respect to $\Sigma\supset S$ and parameters $(\theta', \beta', \alpha', A')$ with
\begin{equation}\label{e:choiceb}
\theta'=\frac{\theta}{b^2}, \quad \beta'=\frac\beta{b^2}, \quad  \alpha'=\frac{\alpha}{2b^2}, \quad A'=A^{b^2}, \quad \text{ where } b=1+\frac{2N\theta\alpha}{1-\theta(1+2N)}.
\end{equation}
Furthermore $\bar u$ satisfies $\|\bar u-u\|_0\leq A^{-1}, \, \bar\rho\le\rho$ and $\bar u=u$ on $S$.

%Let $u\in C^{1, \theta}(\mathcal{ M})$ be an adapted short embedding with $g-u^\sharp e=\rho^2(g+h)$ such that $\rho\leq ?$ in $\mathcal{M}^n$, $S=\{\rho=0\}$, and
%in any chart $\Omega_k$
%\begin{equation}\label{eq-inductive-u-rho-assumption}
%\begin{split}
%|\nabla^2 u|\leq A\rho^{1-\frac{1}{\theta}}, &\quad |\nabla\rho|\leq A\rho^{1-\frac{1}{\theta}}, \\
%|h|\leq A^{-\alpha\theta}\rho^\alpha,&\quad |\nabla h|\leq A^{1-\alpha\theta}\rho^{\alpha-\frac{1}{\theta}},
%\end{split}
%\end{equation}
%for some $A\geq A_0$. Then there exists a new adapted short immersion $\bar{u}\in C^{1, \theta'}(\mathcal{M}^n)$ with $g-\bar{u}^\sharp e=\bar{\rho}^2(g+\bar{h})$ with respect to $\Sigma\supset S$ satisfying $\bar{\rho}\leq\rho$, $\|\bar{u}-u\|_0\leq A^{-1/2}$,  and $\bar{u}=u$ on $S$. Moreover, in any chart $\Omega_k$
%\begin{equation}\label{eq-inductive-conclusion}
%\begin{split}
%|\nabla^2 \bar{u}|\leq A'\bar{\rho}^{1-\frac{1}{\theta'}}, &\quad |\nabla\bar{\rho}|\leq A'\bar{\rho}^{1-\frac{1}{\theta'}}, \\
%|\bar{h}|\leq (A')^{-\alpha'\theta'}\bar{\rho}^{\alpha'},&\quad |\nabla \bar{h}|\leq (A')^{1-\alpha'\theta'}\bar{\rho}^{\alpha'-\frac{1}{\theta'}},
%\end{split}
%\end{equation}
%with
%$$A'=A^{b^2}, \quad \theta'=\frac{\theta}{b^2}, \quad  \alpha'=\frac{\alpha}{2b^2},$$
%where
%\begin{equation}\label{eq-theta-b-condition}
%b=1+\frac{4\theta_0\alpha\theta}{\theta_0-\theta}.
%\end{equation}
\end{proposition}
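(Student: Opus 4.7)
The plan is to adapt the induction-on-skeleta scheme from \cite[Proposition 4.1]{CaoSze2022} using the new stage Proposition \ref{pr-stage}. The essential difference compared to \cite{CaoSze2022} is the extra $\delta^2$-term in \eqref{eq-stage-3}; this term will force the constraint \eqref{e:alpha*beta} on how small $\alpha$ must be compared to $\beta$, and, after bookkeeping, produces the quadratic factor $b^2$ (rather than $b$) in the exponents of \eqref{e:choiceb}.

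First, I would localize the modification to a thin tubular region of $\Sigma\setminus S$. Using Condition \ref{c:geometric} with geometric parameter $\bar r$, I decompose this region dyadically into shells $\mathcal{U}_q:=\{x:\rho(x)\sim A^{-\beta}2^{-q}\}$ for $q=0,1,2,\dots$, each of which, by the Condition, is a pairwise disjoint union of open sets, each one contained in a single chart $\Omega_k$. On every such component I will apply Proposition \ref{pr-stage} with parameters tuned to the local value of $\rho$. Concretely, one picks a base frequency $\lambda_q$ and amplitude $\delta_q\sim(A^{-\beta}2^{-q})^2$ matching $\rho^2$; the frequency $\lambda_q$ is chosen so that the resulting $C^2$-bound in \eqref{eq-stage-2} is exactly what is required by \eqref{eq-adapt} at the new H\"older exponent $\theta'=\theta/b^2$. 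This forces $b=1+\tfrac{2N\theta\alpha}{1-\theta(1+2N)}$ as in \eqref{e:choiceb}: it balances $\delta_q^{1/2}\lambda_q^{N(\kappa-1)+1}$ against $A'\bar\rho^{1-1/\theta'}$.

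Second, since the stage perturbations are supported strictly inside each component and vanish outside $\supp\rho+\B_{\lambda^{-\kappa}}$, the perturbations in different components and in different shells do not interact, and concatenating them defines a single $C^{1,\theta'}$ map $\bar u$ on $\mathcal{M}$ with $\bar u=u$ on $S$. The $C^0$-smallness $\|\bar u-u\|_0\leq A^{-1}$ comes from summing the bounds $\|v-u\|_0\leq C\delta_q^{1/2}\lambda_q^{-(\kappa+1)/2}$ geometrically in $q$, and the $C^{1,\theta'}$ regularity follows by standard interpolation between the $C^1$-estimate in \eqref{eq-stage-2} and the $C^2$-estimate with the chosen $\lambda_q$. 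On each component the new error is $\mathcal{E}$; one then defines $\bar\rho$ by extracting the principal magnitude of $\mathcal{E}$ (so that $\bar\rho$ vanishes on $\Sigma$) and $\bar h=\mathcal{E}/\bar\rho^2-g$, arriving at the adapted form \eqref{eq-adapt-1} with respect to $\Sigma$.

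Finally, the hardest part is the verification of all four inequalities in \eqref{eq-adapt} at the new parameters $(\theta',\beta',\alpha',A')$. The bound on $|\nabla^2\bar u|$ is built into the choice of $\lambda_q$ and fixes $b$. The bound $\bar\rho\leq (A')^{-\beta'}$ follows directly from \eqref{eq-stage-3} and the input $\rho\leq A^{-\beta}$, after checking that $(A^{b^2})^{-\beta/b^2}=A^{-\beta}$ is compatible with the suppressed constants for $A\geq A_*$. The critical point is the $h$-estimate: the new $\bar h$ must satisfy $|\bar h|\leq (A')^{-\alpha'\theta'}\bar\rho^{\alpha'}$ with $\alpha'=\alpha/(2b^2)$, and here one must absorb the quadratic error $\delta^2$ in \eqref{eq-stage-3}. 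This absorption only works provided $\delta$ is sufficiently small compared to $\lambda_q^{-\alpha'}$, which translates, after substituting the chosen relation between $\delta_q$ and $\rho$, into an inequality of the form $\alpha<c_*\beta$ for some $c_*=c_*(N,\theta)>0$; this is the origin of \eqref{e:alpha*beta}. The main obstacle throughout is therefore the simultaneous balancing of all the exponents so that the $\delta^2$ absorption, the adapted bounds, the $C^{1,\theta'}$ regularity, and the shortness of $\bar u$ all hold for $A\geq A_*$ sufficiently large.
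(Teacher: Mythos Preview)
Your intuition about the role of the extra $\delta^2$ term and the origin of the constraint $\alpha<c_*\beta$ is on target, but the iteration scheme you outline has a structural gap.

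You propose disjoint shells $\mathcal{U}_q=\{\rho\sim A^{-\beta}2^{-q}\}$ with a \emph{single} application of Proposition \ref{pr-stage} on each, then concatenate. But any fixed $x\in\Sigma\setminus S$ has $\rho(x)>0$ and therefore lies in at most one such shell; one stage there replaces the metric error $\rho^2(g+h)$ by $\mathcal{E}$ with $|\mathcal{E}|\leq C(\delta\lambda^{1-\kappa}+\delta^2)$, which is smaller but strictly positive in general, so the $\bar\rho$ you extract cannot vanish on $\Sigma$ and $\bar u$ is not adapted with respect to $\Sigma$. There is a second obstruction: $\mathcal{E}$ is a sign-indefinite symmetric tensor, so ``extracting the principal magnitude'' does not produce a representation $g-\bar u^\sharp e=\bar\rho^2(g+\bar h)$ with $\bar\rho\geq 0$ and $\bar h$ small. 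Note also that your shells are level sets of $\rho$, i.e.\ dyadic annuli around $S$, whereas Condition \ref{c:geometric} concerns simultaneous distance to $S$ and to $\Sigma$; the required localization must take place in shrinking neighbourhoods of $\Sigma$, not merely be stratified by $\rho$.

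The paper's construction (following \cite{CaoSze2022}) is instead genuinely iterative: one builds a sequence $(u_q,\rho_q,h_q)$ with $u_{q+1}$ obtained from $u_q$ (not from $u$) by applying Proposition \ref{pr-stage} on $\supp\tilde\chi_q\subset\widetilde\Sigma_{q+1}\cap\{\rho>\tfrac32\delta_{q+2}^{1/2}\}$. For each $x\in\Sigma\setminus S$ one has $\tilde\chi_q(x)=1$ for all large $q$, so the supports \emph{overlap} and $x$ is perturbed infinitely often. The key device is to absorb only $\chi_q^2(\rho_q^2-\delta_{q+2})(g+\tilde h_q)$ at step $q$, deliberately leaving the positive residual $\delta_{q+2}\chi_q^2 g$; this gives $\rho_{q+1}^2=(1-\chi_q^2)\rho_q^2+\delta_{q+2}\chi_q^2\geq\delta_{q+2}$, so that $h_{q+1}$ (which contains $\mathcal{E}/\rho_{q+1}^2$) is well-defined and small, and $\rho_q\to 0$ on $\Sigma$ only in the limit. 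The $\delta^2$ term in \eqref{eq-stage-3} is then handled by showing $\delta_{q+1}\leq\lambda_{q+2}^{1-\kappa}$, and it is precisely this inequality that forces $\alpha<c_*\beta$ with $c_*=\tfrac{1-\theta(1+2N)}{4\theta(1-\theta)}$.
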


\begin{remark}
The proof of Proposition \ref{pr-inductive} follows closely the proof of \cite[Proposition 4.1]{CaoSze2022}. The key difference is that in Proposition \ref{pr-stage} we have estimates \eqref{eq-stage-2}-\eqref{eq-stage-3} with $N=\frac{1-\theta(n)}{2\theta(n)}$, whereas in \cite{CaoSze2022} the corresponding local stage produced estimates with $N=2$ for $n=2$ \cite[Corollary 3.1]{CaoSze2022} and $N=n_*$ for $n\geq 3$ \cite[Corollary 3.2]{CaoSze2022} (compare (4.34) therein to the choice of $b$ in \eqref{e:choiceb}), and in our current setting there is an additional error term in \eqref{eq-stage-3}. On the other hand, our proof below makes apparent that the analogue of Proposition \ref{pr-inductive} as well as of Theorem \ref{th-n-2n} will hold, provided the statement of Proposition \ref{pr-stage} holds with a different value of $N$ (e.g. for general codimension). 
\end{remark}

%%%%%%%
\subsection{The proof of Theorem \ref{th-n-2n}}
%%%%%%%%

The proof of Theorem \ref{th-n-2n} proceeds via an induction on the dimension of skeleta of the triangulation $\mathcal{T}$ defined in Section \ref{se-setup} above, using Proposition \ref{pr-inductive} as the inductive step.

By compactness of $\mathcal{M}$ and a simple mollification argument we may assume that the given short embedding $u$ is smooth and strictly short.
For the purpose of obtaining an initial adapted short embedding with metric error of small amplitude from a given strictly short embedding, we use a slightly modified version of Proposition 5.1 from \cite{CaoSze2022}.
\begin{proposition}\label{p:initial}
Let $u\in C^2(\mathcal{M};\R^m)$ be a strictly short immersion. There exist constants $C_*=C_*(g,u)\geq 1$, $A_*=A_*(g,u)\geq 1$ and $\alpha_*=\alpha_*(g,u)>0$ such that, for any $A\geq A_*$ and $\alpha<\alpha_*$ there exists a strongly short immersion $\tilde u\in C^2(\mathcal{M};\R^m)$ with associated $\tilde \rho,\tilde h$ such that
\begin{equation}\label{e:strong-1}
    g-\tilde u^\sharp e=\tilde\rho^2(g+\tilde h)
\end{equation}
with
\begin{equation}\label{e:strong-rho}
      \tilde\rho=C_*A^{(\alpha-\alpha_*)/2}  
\end{equation}
and moreover the following estimates hold:
\begin{align}
    \|\tilde u-u\|_0&\leq \tilde\rho^2A^{-\alpha},\quad \|\tilde u\|_2\leq A,\label{e:strong-u}\\
    \|\tilde h\|_0&\leq A^{-\alpha},\quad \|\tilde h\|_1\leq A^{1-\alpha}.\label{e:strong-h}
\end{align}
\end{proposition}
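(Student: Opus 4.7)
The strategy closely mirrors that of \cite[Proposition 5.1]{CaoSze2022}: I would construct $\tilde u$ from $u$ by running a finite number of stages of Proposition \ref{pr-stage} (applied chart-by-chart and patched via $\{\phi_k^2\}$), each iteration reducing the amplitude of the metric defect until the residual takes the prescribed form $\tilde\rho^2(g+\tilde h)$.

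For the setup, by strict shortness and compactness of $\mathcal{M}$ there exist $\eta_0, M_0 > 0$ depending only on $g, u$ with $2\eta_0\, g \leq g - u^\sharp e \leq M_0\, g$ and $\|g - u^\sharp e\|_{C^1(\mathcal{M})} \leq M_0$. I would choose $\alpha_* \in (0,1)$ small and $C_* \geq 1$ large (both in terms of $g, u$) so that $\tilde\rho^2 := C_*^2 A^{\alpha - \alpha_*} \leq \eta_0$ for all admissible $A, \alpha$. The tensor $D := g - u^\sharp e - \tilde\rho^2 g$ is then $C^1$, positive definite, of magnitude $\sim \eta_0$, and the aim is to find $\tilde u$ with $\tilde u^\sharp e = u^\sharp e + D - \tilde\rho^2 \tilde h$.

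At stage $0$ I would decompose $\phi_k^2 D$ on each chart into primitive metrics via Lemma \ref{le-decompose} and feed the result into Proposition \ref{pr-stage} with amplitude $\delta_0 \sim \eta_0$, frequency $\lambda_0$ taken large enough that $\delta_0 \lambda_0^{1-\kappa_0} \ll \delta_0^2$, and $\kappa_0$ close to $1$; by \eqref{eq-stage-3} the resulting residual $\mathcal{E}_0$ satisfies $\|\mathcal{E}_0\|_0 \lesssim \delta_0^2$. Iterating this reasoning, at stage $j$ the amplitude reduces quadratically, $\delta_j \sim \delta_{j-1}^2 \sim \eta_0^{2^j}$, so only $k = O(\log \log A)$ stages are needed to reach $\delta_k \sim \tilde\rho^2$. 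The parameters $(\lambda_j, \kappa_j)$ are tuned along the way so that the compounded $C^2$-growth $\prod_j \lambda_j^{N(\kappa_j-1)+1}$ stays below $A$; setting $\tilde h := -\mathcal{E}_k / \tilde\rho^2$ at the end then yields $\|\tilde h\|_0 \leq A^{-\alpha}$ and $\|\tilde h\|_1 \leq A^{1-\alpha}$ provided $\alpha_*$ is chosen small enough, while summing the $C^0$-closeness estimates from \eqref{eq-stage-2} across the $k$ stages gives $\|\tilde u - u\|_0 \leq \tilde\rho^2 A^{-\alpha}$.

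The main technical difficulty will be the parameter bookkeeping across the $O(\log \log A)$ stages, together with verifying the input hypotheses $\|H_j\|_0 \leq \lambda_j^{-\alpha}$, $\|H_j\|_1 \leq \lambda_j^{1-\alpha}$ of Proposition \ref{pr-stage} at each intermediate stage---this is subtle because the normalized residual $H_j = \mathcal{E}_{j-1}/\rho_j^2 - g$ has $C^0$-size of order one, forcing a careful calibration of $\rho_j$ relative to $\lambda_j$ (and placing an upper bound $\lambda_j \lesssim 1/\alpha$-dependent on the frequency that shrinks as $\alpha$ grows). The novel $\delta^2$ terms in \eqref{eq-stage-3} (absent from the corresponding argument in \cite{CaoSze2022}) are what drive the quadratic amplitude reduction and hence dictate the scaling $\tilde\rho = C_* A^{(\alpha - \alpha_*)/2}$, forcing the smallness constraint $\alpha < \alpha_*$.
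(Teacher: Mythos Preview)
Your approach has a genuine obstruction at the very first stage. Proposition~\ref{pr-stage} takes as input a decomposition of the target increment in the specific form $\rho^2(G+H)$ with the perturbation $H$ \emph{small}: hypothesis \eqref{eq-stage-H} demands $\|H\|_0\leq\lambda^{-\alpha}<1$. For a general strictly short immersion the defect $D=g-u^\sharp e-\tilde\rho^2 g$ is positive definite but has no reason to be approximately conformal to $g$; writing $D=\rho_0^2(G+H_0)$ for any scalar $\rho_0$ forces $H_0=D/\rho_0^2-G$ to have $C^0$-size of order one, and no admissible choice of $\lambda_0>1$, $\alpha>0$ can then make $\|H_0\|_0\leq\lambda_0^{-\alpha}$. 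You flag this as a ``subtle calibration'' issue, but it is a hard incompatibility: the smallness of $H$ is used in the proof of Proposition~\ref{pr-stage} to guarantee $\tilde G+\tilde H\geq(2\gamma)^{-1}\mathrm{Id}$ so that Lemmas~\ref{le-decompose}--\ref{le-perturb} apply. The same problem recurs at every subsequent stage, since the error $\mathcal{E}_{j-1}$ from \eqref{eq-stage-3} is again a general symmetric tensor, not close to a multiple of $G$.

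The paper's proof is far more direct and avoids Proposition~\ref{pr-stage} entirely. One writes $g-u^\sharp e-\delta g=\sum_{j=1}^{N_*}a_j^2\,\xi_j\otimes\xi_j$ globally via Nash's decomposition (possible because the left side is uniformly positive definite), and then applies a \emph{single} pass of the classical Nash--Kuiper step \cite[Proposition~3.2]{CaoSze2022} to add all $N_*$ primitive metrics with one frequency parameter $\Lambda$. This yields $\tilde u$ with $g-\tilde u^\sharp e=\delta g-\mathcal E$, $\|\mathcal E\|_0\leq C_1/\Lambda$, $\|\mathcal E\|_1\leq C_1\Lambda^{N_*-1}$, $\|\tilde u\|_2\leq C_1\Lambda^{N_*}$. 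Setting $\tilde\rho^2=\delta$, $\tilde h=-\mathcal E/\delta$, and choosing $\Lambda=C_1^{-1/N_*}A^{1/N_*}$, $\delta=C_1^{1+1/N_*}A^{\alpha-1/N_*}$ gives all the claimed estimates in one stroke, with $\alpha_*=1/(2N_*)$. No iteration is needed, and the $\delta^2$ error terms of Proposition~\ref{pr-stage} play no role whatsoever in this argument.
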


\begin{proof}[Proof of Proposition \ref{p:initial}]
Since $u$ is strictly short and $\mathcal{M}$ is compact, there exists $\delta^*>0$ such that
\begin{align*}
g-u^\sharp e\geq 2\delta^* g.
\end{align*}
Consequently, for any $\delta\in (0,\delta_*)$ we have $g-u^\sharp e-\delta g\geq \delta^* g$ and hence we may apply 
Lemma 1 in \cite{Nash:1954vt} (see also Lemma 1 in \cite{Szek2014:LN}) to obtain $N_*=N_*(\mathcal{M},\delta_*, g)\in \N$ and a decomposion of the metric error into finite number of primitive metrics in the different charts $\Omega_k$ as
\begin{equation}
\label{e:strong-decompose}
g-u^\sharp e-\delta g=\sum_{j=1}^{N_*}a_j(x)^2\xi_j\otimes\xi_j,
\end{equation}
where $|\xi_j|=1$ and each $a_j\in C^1$ is supported in a single chart $\Omega_k$ for some $k$. 
Utilizing \cite[Proposition 3.2]{CaoSze2022} or \cite[Proposition 3.2]{CaoSze2019} we obtain, for any $\Lambda\geq \Lambda_*=\Lambda_*(u,g)$ an immersion $\tilde{u}$ and metric error $\mathcal{E}$ such that
\begin{align*}
\tilde{u}^\sharp e=u^\sharp e+\sum^{N_*}_{j}a_j(x)^2\xi_j\otimes\xi_j+\mathcal{E}
\end{align*}
with
\begin{equation}
\label{e:strong-error}
\begin{split}
\|\tilde{u}-u\|_0\leq\frac{C_1}{\Lambda}, &\qquad \|\tilde{u}\|_2\leq C_1\Lambda^{N_*},\\
\|\mathcal{E}\|_0\leq\frac{C_1}{\Lambda}, &\qquad \|\mathcal{E}\|_1\leq C_1\Lambda^{N_*-1},
\end{split}
\end{equation}
where $C_1\geq 1$ is a constant depending only on $u, g$. Thus, using \eqref{e:strong-decompose}, we have
\begin{align*}
g-\tilde{u}^\sharp e=\delta \left(g-\frac{\mathcal{E}}{\delta}\right),
\end{align*}
so that \eqref{e:strong-1} holds with 
$$\tilde{h}=-\frac{\mathcal{E}}{\delta},\quad \tilde\rho=\delta^{1/2}.$$
Now set
$$
\Lambda=C_1^{-1/N_*}A^{1/N_*},\quad \delta=C_1^{1+1/N_*}A^{\alpha-1/N_*}
$$
with $0<\alpha<1/N_*$. Then, using \eqref{e:strong-error} we deduce
\begin{equation*}
    \|\tilde u-u\|_0\leq C_1^{1+1/N_*} A^{-1/N_*}=\delta A^{-\alpha},\quad \|\tilde u\|_{2}\leq A, 
\end{equation*}
and
\begin{equation*}
    \|\tilde h\|_0\leq A^{-\alpha},\quad \|\tilde h\|_{1}\leq A^{1-\alpha}.
\end{equation*}
This proves the statement of the Proposition, with $\alpha_*=1/(2N_*)$, $C_*=C_1^{(1+N_*)/(2N_*)}$ and $A\geq A_*$ sufficiently large so that $\delta<\delta_*$ and $\Lambda\geq \Lambda_*$, depending on $g,u$ as well as $C_1$.

\end{proof}

In particular we have the following consequence:
\begin{corollary}\label{c:initial}
Let $u\in C^2(\mathcal{M};\R^m)$ be a strictly short immersion. There exists $\beta_*=\beta_*(u,g)>0$ such that, for any $\beta_0<\beta_*$ and any $0<\theta_0\leq 1$ there exists $A_*=A_*(u,g,\beta_0,\theta_0)\geq 1$ with the following property: for any $A_0\geq A_*$ there exists $\alpha_0>0$ and an adapted short immersion $\tilde u\in C^2(\mathcal{M};\R^m)$ with respect to the empty set $\Sigma=\emptyset$ with parameters $(\theta_0,\beta_0,\alpha_0,A_0)$. Moreover, $\|\tilde u-u\|_{0}\leq A_0^{-\alpha}$.
\end{corollary}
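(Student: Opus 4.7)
The plan is to apply Proposition \ref{p:initial} with its free parameter $A$ taken equal to $A_0$, and then verify that the resulting triple $(\tilde u,\tilde\rho,\tilde h)$ satisfies the adapted-short-embedding estimates \eqref{eq-adapt} for a suitable choice of $\alpha_0$. The crucial observation is that $\tilde\rho=C_*A_0^{(\alpha-\alpha_*)/2}$ is a positive constant, so $\Sigma=\{\tilde\rho=0\}=\emptyset$ and $\nabla\tilde\rho\equiv 0$; in particular the $\rho$-gradient condition in \eqref{eq-adapt} is trivially satisfied.

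First I would set $\beta_*:=\alpha_*/2$, where $\alpha_*=\alpha_*(g,u)$ is supplied by Proposition \ref{p:initial}. For any prescribed $\beta_0<\beta_*$, pick $\alpha\in(0,\alpha_*-2\beta_0)$, which forces $(\alpha_*-\alpha)/2>\beta_0$; then $\tilde\rho\leq A_0^{-\beta_0}$ once $A_0$ exceeds a threshold depending on $C_*,\alpha,\beta_0$, and in particular $\tilde\rho\leq 1$. Combined with $\theta_0\leq 1$ this yields $\tilde\rho^{1-1/\theta_0}\geq 1$, and so the bound $\|\tilde u\|_2\leq A_0$ from \eqref{e:strong-u} already implies the required $\|\nabla^2\tilde u\|_0\leq A_0\tilde\rho^{1-1/\theta_0}$.

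The remaining task is to choose $\alpha_0>0$ so that \eqref{e:strong-h} implies the two $h$-estimates in \eqref{eq-adapt}. Writing $\tilde\rho$ as a power of $A_0$ and equating exponents, the $\|\tilde h\|_0$-bound reduces to an inequality of the form $\alpha\geq\alpha_0\bigl(\theta_0+(\alpha_*-\alpha)/2\bigr)$ modulo a harmless multiplicative $C_*$-factor, so it suffices to take $\alpha_0$ as a small fixed fraction of $\alpha/\bigl(\theta_0+(\alpha_*-\alpha)/2\bigr)$. The $\|\tilde h\|_1$-bound is then automatic: relative to the $\|\tilde h\|_0$-comparison, the right-hand side of \eqref{eq-adapt} gains a factor $\tilde\rho^{-1/\theta_0}\geq 1$, while the left-hand side only gains the factor $A_0$ already present in the $C^1$ estimate of \eqref{e:strong-h}. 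Finally, $\|\tilde u-u\|_0\leq A_0^{-\alpha}$ is immediate from \eqref{e:strong-u} together with $\tilde\rho^2\leq 1$.

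There is no serious analytic obstacle; the statement is essentially a bookkeeping exercise repackaging Proposition \ref{p:initial} into the language of Definition \ref{de-adapt}. The one point that requires genuine attention is the simultaneous compatibility of the two free exponents $\alpha$ (from Proposition \ref{p:initial}) and $\alpha_0$ (from Definition \ref{de-adapt}) across all four estimates in \eqref{eq-adapt}, and it is exactly this compatibility that fixes the ceiling $\beta_*=\alpha_*/2$.
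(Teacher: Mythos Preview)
Your proposal is correct and follows essentially the same route as the paper: set $\beta_*=\alpha_*/2$, choose $\alpha\in(0,\alpha_*-2\beta_0)$, then pick $\alpha_0>0$ satisfying $\alpha_0\bigl(\theta_0+\tfrac12(\alpha_*-\alpha)\bigr)<\alpha$ and verify the estimates in Definition~\ref{de-adapt} one by one using the constancy of $\tilde\rho$ and the bounds from Proposition~\ref{p:initial}. The paper's proof is identical in structure and in every choice of parameter.
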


We remark that being adapted with respect to $\Sigma=\emptyset$ amounts to the condition that inequalities \eqref{eq-adapt} in Definition \ref{de-adapt} hold globally in $\mathcal{M}$.

\begin{proof}[Proof of Corollary \ref{c:initial}]
We aim to apply Proposition \ref{p:initial} to obtain $\tilde u\in C^2(\mathcal{M};\R^m)$ with properties \eqref{e:strong-1}-\eqref{e:strong-h} with $A=A_0$ to be fixed. Define $\beta_*=\alpha_*/2$, where $\alpha_*(g,u)$ is given in the proposition. For any $\beta_0<\beta_*$ fix $0<\alpha<\alpha_*$ such that $\alpha<\alpha_*-2\beta_0$. Then, choose $\alpha_0>0$ so that 
\begin{equation}\label{e:choicealpha0}
\alpha_0\left(\theta_0+\frac12(\alpha_*-\alpha)\right)< \alpha.
\end{equation}
Then, from \eqref{e:strong-rho} we obtain
$$
0<\tilde\rho=C_*A_0^{(\alpha-\alpha_*)/2}\leq A_0^{-\beta_0}
$$
for sufficiently large $A_0\geq A_*$ (depending on $C_*$ as well as on $(\alpha_*-\alpha)/2-\beta_0>0$), and obviously $\nabla\tilde\rho=0$. Further, by \eqref{e:strong-u}, we have
\[\|u-\tilde u\|_0\leq A_0^{-\alpha},\quad \|\nabla^2\tilde u|\leq A_0\leq A_0\tilde\rho^{1-1/\theta_0}\]
since $\tilde\rho\leq 1$ and $\theta_0\in (0,1]$. Finally, by \eqref{e:strong-h}, we get
\begin{align*}
    |h(x)|&\leq A_0^{-\alpha}\leq A_0^{-\alpha_0\theta_0}\tilde\rho^{\alpha_0},\\
    |\nabla h(x)|&\leq A_0^{1-\alpha}\leq A_0^{1-\alpha_0\theta_0}\tilde\rho^{\alpha_0-1/\theta_0}
\end{align*}
because of \eqref{e:choicealpha0}. Thus, we verified all the conditions of Definition \ref{de-adapt}, concluding the proof.
\end{proof}

Therefore, we utilize Corollary \ref{c:initial} to obtain the initial adapted short immersion $u_0=\tilde u$. Since $u$ is an embedding, the estimate $\|u_0-u\|_0\leq A_0^{-\alpha}$ ensures uniform closeness from which it follows that $u_0$ is an embedding provided $A_0$ is sufficiently large. 
We conclude that for such choice of $\alpha_0,\beta_0$ the initial map $u_0$ is an adapted strictly short embedding with parameters $(\theta_0,\beta_0,\alpha_0,A_0)$.

Now we can proceed by induction on the dimension of skeleta, analogously to \cite{CaoSze2022}. 

First of all, as a general remark, observe that if $u$ is an adapted short embedding satisfying \eqref{eq-adapt} with parameters $(\theta,\beta,\alpha,A)$, then it is also satisfies \eqref{eq-adapt} with parameters $(\theta',\beta',\alpha',A)$ for any $\theta'<\theta$, $\beta'<\beta$ and $\alpha'<\alpha$. 

Our aim is to construct a sequence of short embeddings $u_1,u_2,\dots,u_{n+1}$ which are adapted with respect to the skeleta $\Sigma_1\subset\Sigma_2\subset\dots\Sigma_{n+1}=\mathcal{M}^n$ with parameters $(\theta_j,\beta_j,\alpha_j,A_j)$ where
\begin{equation}\label{e:parametersj}
A_{j+1}=A_j^{b_j^2}, \quad \theta_{j+1}=\frac{\theta_j}{b_j^2}, \quad  \beta_{j+1}=\frac{\beta_j}{b_j^2},\quad \alpha_{j+1}=\frac{\alpha_j}{2b_j^2},
\end{equation}
where $$b_j=1+\frac{2N\alpha_j\theta_j}{1-\theta_j(1+2N)}.$$ 

For any $\theta<(1+2N)^{-1}=\theta(n)$ (recall $N$ is determined in Proposition \ref{pr-stage}) choose $\theta_0$ with $\theta< \theta_0<(1+2N)^{-1}$. From the recursion \eqref{e:parametersj} we see that $\theta_{n+1}<\theta_n<\dots<\theta_1<\theta_0$ with $$\theta_{n+1}=(\prod_{j=0}^nb_j^{-2})\theta_0,$$
with $\alpha_{n}<\dots<\alpha_0$ and $1<b_{n}<\dots<b_0$ also recursively defined. We then see that we can choose $\alpha_0>0$ sufficiently small so that $\theta_{n+1}>\theta$ and moreover that $\alpha_j<c_*(N,\theta_j)\beta_j$ (c.f.~\eqref{e:alpha*beta}). We can then choose $A_0\geq 1$ sufficiently large so that in addition
$A_j\geq A_*(N,\theta_j,\beta_j,\alpha_j)$ for all $j=0,1,\dots,n$. 

With these choices of parameters we first apply Corollary \ref{c:initial} to obtain $u_0$ and then apply Proposition \ref{pr-inductive} $n+1$ times to obtain $u_1,\dots,u_{n+1}$. By choosing $A_0$ even larger if necessary we ensure in addition $\|u_{n+1}-u\|_0$ is sufficiently small so that $u_{n+1}$ is an embedding, which then is of class $C^{1,\theta}$ and is isometric. Furthermore, by choosing a sequence of such $A_0^{k}\to \infty$ we can construct a sequence of isometric embeddings of class $C^{1,\theta}$ which uniformly approximate $u$. 

This concludes the proof of Theorem \ref{th-n-2n}.

\section{Proof of Proposition \ref{pr-stage}}\label{se-stage}
%%%%%%%%%%%%

The proof will be completed by one observation on Kuiper's corrugations and absorbing certain higher order error terms in the metric decomposition. In this section all norms are taken on some bounded region $\Omega\subset\R^n.$

\subsection{A single step of perturbation}
To perturb the embedding $u:\Omega\to\R^{m}$ for iteration, we shall add primitive metrics in a single chart, i.e. in an open bounded subset $\Omega\subset\R^n$.  To keep the value of the given adapted short embedding in the isometric region, we add compactly supported primitive metrics via mollifying the metric and the given embedding at different length scales as in \cite{CaoSze2022}.  The difference here is that a single ``step" is for $n$ primitive metrics rather than only one primitive metric in \cite{Nash:1954vt, CoDeSz2012, DISz2018, CaoSze2022}. Such a ``step" of perturbation proceeds through the bound of the error resulting from the first Kuiper's corrugation and $n$ normal vectors. 

We start by recalling two lemmas.
The first one is about the Kuiper corrugation functions used in \cite{Kuiper:1955th,CoDeSz2012, HuWa2017, DISz2018}.
\begin{lemma}\label{le-gamma}
There exists $\epsilon>0$ and a smooth vector function $\Gamma=(\Gamma_1,\Gamma_2)(s, t)$ defined on $[0, \epsilon]\times\R$, which is a $2\pi$-periodic function in $t$, satisfying $(1+\partial_t\Gamma_1)^2+(\partial_t\Gamma_2)^2=1+s^2$ and the following estimates:
\begin{align}
\|\partial_t^j\Gamma_1(s, \cdot)\|_0\leq C(j)s^2,~~&
\|\partial_t^j\Gamma_2(s, \cdot)\|_0\leq C(j)s; \label{eq-gamma-c0}\\
\|\partial_s\partial_t^j\Gamma_1(s, \cdot)\|_0\leq C(j)s,~~&
\|\partial_s\partial_t^j\Gamma_2(s, \cdot)\|_0\leq C(j);\label{eq-gamma-c1}\\
\|\partial_s^2\partial_t^j\Gamma_1(s, \cdot)\|_0\leq C(j), ~~&
\|\partial_s^2\partial_t^j\Gamma_2(s, \cdot)\|_0\leq C(j),
\label{eq-gamma-c2}
\end{align}
for any $j\in\mathbb{N}.$
\end{lemma}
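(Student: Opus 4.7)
The plan is to carry out the classical Kuiper corrugation construction via a phase-function ansatz. I prescribe the tangent direction by
\[1+\partial_t\Gamma_1=\sqrt{1+s^2}\cos\psi,\qquad \partial_t\Gamma_2=\sqrt{1+s^2}\sin\psi,\]
which automatically gives $(1+\partial_t\Gamma_1)^2+(\partial_t\Gamma_2)^2=1+s^2$, and set $\psi(s,t)=\kappa(s)\sin t$ for a scalar amplitude $\kappa:[0,\epsilon]\to[0,\infty)$ to be determined. Defining $\Gamma_1,\Gamma_2$ by $t$-integration from zero, the periodicity of $\Gamma_2$ in $t$ comes for free from the antisymmetry $\tau\mapsto 2\pi-\tau$ applied to $\sin(\kappa\sin\tau)$, while periodicity of $\Gamma_1$ reduces to the scalar equation
\[\sqrt{1+s^2}\,J_0(\kappa(s))=1,\]
where $J_0(\kappa)=\frac{1}{2\pi}\int_0^{2\pi}\cos(\kappa\sin\tau)\,d\tau$ is the classical Bessel function.

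Since $J_0(\kappa)=1-\kappa^2/4+O(\kappa^4)$, the above rewrites as $\kappa^2 g(\kappa)=1-(1+s^2)^{-1/2}$ with $g$ smooth and $g(0)=1/4$. Naively the implicit function theorem fails at $s=0$ because $J_0'(0)=0$; to bypass this I substitute $\kappa=s\tilde\kappa$ and divide by $s^2$, obtaining
\[\tilde\kappa^2\,g(s\tilde\kappa)=\frac{1-(1+s^2)^{-1/2}}{s^2},\]
whose right-hand side is smooth in $s$ near $0$ with value $1/2$ at $s=0$. Since $\partial_{\tilde\kappa}\bigl[\tilde\kappa^2 g(s\tilde\kappa)\bigr]\big|_{s=0,\tilde\kappa=\sqrt 2}=2\sqrt 2\cdot g(0)=\sqrt 2/2\neq 0$, the implicit function theorem delivers a smooth $\tilde\kappa$ on some $[0,\epsilon]$ with $\tilde\kappa(0)=\sqrt 2$, and hence a smooth $\kappa(s)=s\tilde\kappa(s)$ with $\kappa(s)=O(s)$. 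This resolution of the degenerate equation at $s=0$ is the only step in the construction that requires genuine care.

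The estimates \eqref{eq-gamma-c0}--\eqref{eq-gamma-c2} then follow by direct computation from the explicit formulas
\[\Gamma_2(s,t)=\int_0^t\sqrt{1+s^2}\sin(\kappa(s)\sin\tau)\,d\tau,\quad \Gamma_1(s,t)=\int_0^t\bigl[\sqrt{1+s^2}\cos(\kappa(s)\sin\tau)-1\bigr]d\tau,\]
combined with Taylor expansion in the small parameter $\kappa(s)\sin\tau=O(s)$. For $\Gamma_2$, the factorization $\sin(\kappa\sin\tau)=\kappa\sin\tau\cdot\phi_1(\kappa\sin\tau)$ with $\phi_1(x)=(\sin x)/x$ smooth presents $\Gamma_2$ as $s$ times a function smooth in $(s,t)$ on $[0,\epsilon]\times\mathbb{R}$, giving $\|\partial_t^j\Gamma_2\|_0\leq C(j)s$. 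For $\Gamma_1$, the identity
\[\sqrt{1+s^2}\cos(\kappa\sin\tau)-1=\bigl(\sqrt{1+s^2}-1\bigr)-\sqrt{1+s^2}\,(\kappa\sin\tau)^2\phi_2(\kappa\sin\tau),\]
with $\phi_2(x)=(1-\cos x)/x^2$ smooth, shows both summands are $O(s^2)$ uniformly and yields $\|\partial_t^j\Gamma_1\|_0\leq C(j)s^2$. Since $\kappa\in C^\infty([0,\epsilon])$ with $\kappa(s)=O(s)$, each application of $\partial_s$ effectively lowers the $s$-power by one through the chain rule, which produces the remaining bounds on $\partial_s\partial_t^j\Gamma_i$ and $\partial_s^2\partial_t^j\Gamma_i$ and closes the proof.
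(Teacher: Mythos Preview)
Your proof is correct and is precisely the classical Kuiper corrugation construction. The paper itself does not supply a proof of this lemma; it is stated there as a known result recalled from \cite{Kuiper:1955th,CoDeSz2012,HuWa2017,DISz2018}, and the argument you give (the phase ansatz $\psi=\kappa(s)\sin t$, reduction of the periodicity condition to $\sqrt{1+s^2}\,J_0(\kappa)=1$, and resolution of the degeneracy at $s=0$ via the substitution $\kappa=s\tilde\kappa$) is exactly the one carried out in those references, in particular in \cite[Lemma~4.4]{CoDeSz2012}. Your final paragraph is terse but the underlying mechanism is sound: once you have written $\Gamma_2=sF_2(s,t)$ and $\Gamma_1=s^2F_1(s,t)$ with $F_1,F_2$ smooth on $[0,\epsilon]\times\mathbb{R}$ and $2\pi$-periodic in $t$, all the claimed bounds on $\partial_s^k\partial_t^j\Gamma_i$ follow by the product rule from the boundedness of $F_i$ and its derivatives on $[0,\epsilon]\times[0,2\pi]$.
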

The second one is about the existence of normal vectors with respect to the given embedding, which is from \cite{CaoIn2020}.
\begin{lemma}\label{le-normal}
Let $\Omega\subset\R^n$ be a domain and $N\in\N$. Assume $u\in C^{N+1}(\bar\Omega,\R^{m})$ is an embedding such that
\begin{equation} \label{eq-normal}
\gamma^{-1}\mathrm{Id}\leq\nabla u^T\nabla u\leq \gamma \mathrm{Id}
\end{equation}
for some $\gamma>1.$ Then $u(\Omega)$ admits a family of normal vectors  $\{\zeta_1,\cdots,\zeta_k,\cdots, \zeta_{m-n}\}\subset C^{N}(\bar\Omega, \R^m)$ satisfying
\begin{align}
\zeta_i^T\zeta_k=\delta_{ik},\quad\nabla u^T\zeta_k=0,\quad [\zeta_i]_{j}\leq C(j,\gamma)(1+[v]_{j+1}),\label{eq-normalestimates}
\end{align}
for all $0\leq j\leq N.$ Here $\delta_{ik}=1$ when $i=k$ and vanishes else.
\end{lemma}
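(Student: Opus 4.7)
The plan is to build each $\zeta_k$ as the orthogonal projection of a fixed auxiliary vector onto the normal bundle of $u(\Omega)$, followed by Gram--Schmidt.

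First, introduce the normal projector
\[
P(x):=\mathrm{Id}_m-\nabla u(x)\,(\nabla u^T\nabla u)^{-1}(x)\,\nabla u^T(x).
\]
Since $\gamma^{-1}\mathrm{Id}\le\nabla u^T\nabla u\le\gamma\,\mathrm{Id}$ implies $\det(\nabla u^T\nabla u)\ge\gamma^{-n}$, Cramer's rule combined with the standard Hölder product, chain and reciprocal estimates shows that $(\nabla u^T\nabla u)^{-1}$, and therefore $P$, lies in $C^N(\bar\Omega;\R^{m\times m})$ with
\[
[P]_j\le C(j,\gamma)(1+[u]_{j+1}),\qquad 0\le j\le N.
\]
Pointwise, $P(x)$ is the symmetric orthogonal projection onto $(\nabla u(x)\R^n)^\perp$, so $\mathrm{range}\,P(x)$ has dimension $m-n$ at every $x$.

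Next, produce an auxiliary frame of the normal bundle. Since $\Omega$ may be taken within a single, contractible chart, the rank-$(m-n)$ subbundle $x\mapsto\mathrm{range}\,P(x)$ of the trivial bundle $\Omega\times\R^m$ is trivializable, and hence admits $C^\infty$ sections $w_1,\dots,w_{m-n}$ which are pointwise linearly independent on $\bar\Omega$. Concretely, one covers $\bar\Omega$ by finitely many open sets on each of which some fixed $(m-n)$-subset of the standard basis $\{e_1,\dots,e_m\}$ has $P$-images uniformly linearly independent, and patches these local choices into a global frame using the contractibility of $\Omega$. Apply Gram--Schmidt to $(w_1,\dots,w_{m-n})$:
\[
\tilde\zeta_k:=w_k-\sum_{j<k}(w_k^T\zeta_j)\zeta_j,\qquad \zeta_k:=\tilde\zeta_k/|\tilde\zeta_k|.
\]
By construction $\zeta_i^T\zeta_k=\delta_{ik}$, and $\nabla u^T\zeta_k=0$ follows because each $\zeta_k$ lies in $\mathrm{range}\,P$.

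For the quantitative bound \eqref{eq-normalestimates}, the uniform linear independence of the auxiliary frame furnishes $|\tilde\zeta_k|\ge c(\gamma,\Omega)>0$ at every step of the recursion, so the division in Gram--Schmidt is harmless. Combined with the estimate on $[P]_j$ and standard Hölder product and quotient inequalities (of the Schauder type used throughout \cite{CoDeSz2012,DISz2018,CaoSze2022}), this propagates $[P]_j\le C(j,\gamma)(1+[u]_{j+1})$ to $[\zeta_k]_j\le C(j,\gamma)(1+[u]_{j+1})$ as required. The main obstacle is the globalization step: without some topological input on $\Omega$ one cannot guarantee a globally smooth, uniformly non-degenerate auxiliary frame of the normal bundle; contractibility of the chart (built into the setup of Section \ref{se-setup}) is precisely what circumvents this, and ensures that all constants depend only on $\gamma$ and on fixed geometric features of the chart, absorbed into $C(j,\gamma)$.
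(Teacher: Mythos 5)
Your overall route (orthogonal projector $P=\mathrm{Id}-\nabla u(\nabla u^T\nabla u)^{-1}\nabla u^T$, an auxiliary frame of $\mathrm{range}\,P$, then Gram--Schmidt) is the natural one, and note that the paper itself does not prove this lemma but imports it from \cite{CaoIn2020}. However, as written your argument has a genuine gap exactly at the step that carries the content of the lemma: the construction of a \emph{global} $C^N$ frame of the normal bundle. The lemma is stated for an arbitrary domain $\Omega\subset\R^n$, and it is applied (in Proposition \ref{pr-step}) to general domains; neither the statement nor the paper's setup supplies contractibility, so ``$\Omega$ may be taken within a single, contractible chart'' imports a hypothesis that is not there. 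Even granting it, ``patching these local choices into a global frame using the contractibility'' is not an argument: orthonormal (or merely nondegenerate) frames cannot be glued by a partition of unity without risking degeneration, and what is really needed is to trivialize the $O(m-n)$-valued cocycle of transition maps between the local coordinate-subset frames --- none of which you carry out. (If one wants to avoid topological hypotheses altogether, one should instead invoke that the normal bundle is stably trivial and of rank $m-n\geq n>\dim$ of the homotopy type of $\Omega$, which is a different, obstruction-theoretic argument; in any case some such input must appear explicitly.)

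The second gap is quantitative and is the one that matters for how the lemma is used. The stated bound has a constant $C(j,\gamma)$ depending only on $j$ and $\gamma$, whereas your construction produces constants depending on the chosen covering of $\bar\Omega$ and on the asserted lower bound $|\tilde\zeta_k|\geq c(\gamma,\Omega)$, which you do not prove. The hypothesis $\gamma^{-1}\mathrm{Id}\leq\nabla u^T\nabla u\leq\gamma\mathrm{Id}$ controls the size of the tangent planes but not how fast they rotate, so the fineness of a covering on which fixed coordinate subsets have uniformly independent $P$-images is dictated by the modulus of continuity of $\nabla u$ (essentially by $[u]_2$), not by $\gamma$; the same applies to whatever cutoffs or rotations enter the patching. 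Declaring that such quantities are ``absorbed into $C(j,\gamma)$'' is not legitimate, and it is not harmless here: in Proposition \ref{pr-step} the lemma is applied to the mollified map $\tilde u$, and the resulting bounds \eqref{eq-xizeta-est} must hold with constants $C(\gamma,M)$ uniform in the large parameters $\nu,\mu$; any hidden dependence of the frame on covering scales tied to $\|\tilde u\|_2$ would contaminate these estimates and hence the whole iteration. To repair the proof you must exhibit a concrete frame construction (e.g.\ Gram--Schmidt applied to a fixed completion after reducing to small oscillation of $\nabla u$, with a bookkeeping of how the pieces are joined) together with an explicit verification that $[\zeta_i]_j\leq C(j,\gamma)(1+[u]_{j+1})$, using the interpolation inequality \eqref{eq-interpolation} to handle the product terms in $[P]_j$ --- that last point you gesture at correctly, but the globalization and the uniformity of constants are missing.
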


Now we are ready to proceed with a single step of the perturbation.

\begin{proposition}\label{pr-step} {\bf[Step]}
Let $\Omega\subset\R^n$ be a domain and $m>n$ be an integer. Let $u\in C^{2}(\Omega, \R^{m})$ be an embedding such that
\begin{equation}\label{eq-step-u}
{\gamma}^{-1}\mathrm{Id}\leq\nabla u^T\nabla u\leq\gamma \mathrm{Id},\textrm{ in }\Omega,
\quad  \|u\|_{2}\leq M\delta^{1/2}\nu,
\end{equation}
and $a_k, \Phi_k\in C^{2}(\Omega)$ satisfy
\begin{align}
&\|a_k\|_0\leq M\delta^{1/2},\quad \|a_k\|_1\leq M\delta^{1/2}\nu, \quad \|a_k\|_2\leq M\delta^{1/2}\nu\tilde\nu, \label{eq-step-a}\\
&\frac{1}{M}\leq |\nabla\Phi_k|\leq M \textrm{ in }\Omega, \quad
\|\nabla\Phi_k\|_1\leq M\nu, \quad \|\nabla\Phi_k\|_2\leq M\nu\tilde\nu \label{eq-step-phi}
\end{align}
for any $k=1,2,\cdots, m-n$  and some $M, \gamma\geq 1$, $\delta\leq 1$ and $\nu\leq\tilde\nu$. There exists a constant $c_0=c_0(M, \gamma)$ such that, for any $$\mu\geq c_0\tilde\nu,\quad \delta\leq c_0^{-1},$$
there exists a new embedding $v\in C^{2}(\Omega, \R^{m})$ such that
\begin{align}
{\bar\gamma}^{-1}\mathrm{Id}\leq\nabla v^T\nabla v&\leq \bar\gamma \mathrm{Id}\quad\textrm{ in }\Omega, \label{eq-step-0}\\
v&=u\textrm{ on }\Omega\setminus\big(\cup_{k=1}^{m-n}\supp a_k\big),\label{eq-step-1}\\
\|v-u\|_j&\leq C(\gamma)\delta^{1/2}\mu^{j-1},\, j=0, 1, \label{eq-step-2}\\
\|v\|_2&\leq C(\gamma)\delta^{1/2}\mu,\label{eq-step-3}\\
\left\|\nabla v^T\nabla v-\left(\nabla u^T\nabla u+\sum_{k=1}^{m-n}a_k^2\nabla\Phi_k\otimes\nabla\Phi_k\right)\right\|_j&\leq \overline M(\delta\nu\mu^{j-1}+\delta^2\mu^j), \,j=0, 1.\label{eq-step-4}
\end{align}
Here $\bar\gamma$ and $\overline M$ depend only on $M,\gamma$.
\end{proposition}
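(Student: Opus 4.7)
The plan is to build $v$ by superposing $m-n$ Kuiper-type corrugations, one for each pair $(a_k,\Phi_k)$, along mutually orthonormal normal directions to $u(\Omega)$. First, Lemma \ref{le-normal} with $N=1$ provides a family $\{\zeta_1,\dots,\zeta_{m-n}\}\subset C^{1}(\bar\Omega,\R^m)$ with $\zeta_k^T\zeta_l=\delta_{kl}$, $\nabla u^T\zeta_k=0$, and $[\zeta_k]_1\le C(\gamma)(1+\|u\|_2)$. Setting $s_k=a_k/|\nabla\Phi_k|$ and $\tau_k=\nabla u\,\nabla\Phi_k/|\nabla\Phi_k|^2$, I define
\[
v(x):=u(x)+\sum_{k=1}^{m-n}w_k(x),\qquad w_k:=\frac{1}{\mu}\Big[\Gamma_1(s_k,\mu\Phi_k)\,\tau_k+\Gamma_2(s_k,\mu\Phi_k)\,\zeta_k\Big],
\]
with the precise normalisation of $\Gamma, s_k, \tau_k$ chosen exactly as in the single-primitive-metric constructions of \cite[\S3]{CoDeSz2012} or \cite[Prop.~3.2]{CaoSze2022}, so that the diagonal block of the expansion below will reproduce $a_k^2\nabla\Phi_k\otimes\nabla\Phi_k$ at leading order. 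Since $\Gamma_i(0,\cdot)\equiv 0$, each $w_k$ is supported in $\supp a_k$, which gives \eqref{eq-step-1} directly. The $C^0$, $C^1$ and $C^2$ bounds \eqref{eq-step-2}--\eqref{eq-step-3} then follow from a routine application of Lemma \ref{le-gamma} to the envelopes controlled by \eqref{eq-step-a}--\eqref{eq-step-phi}, using $\mu\ge c_0\tilde\nu$ to absorb all subleading factors; the matrix bounds \eqref{eq-step-0} and the embedding property of $v$ come out of the $C^0$-closeness of $v$ to $u$ and positive semidefiniteness of the leading addition, once $\delta\le c_0^{-1}$ is chosen small relative to $\gamma$.

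Next I would expand and split $\nabla v^T\nabla v-\nabla u^T\nabla u$ into diagonal ($k=l$) and off-diagonal ($k\neq l$) contributions:
\[
\nabla v^T\nabla v-\nabla u^T\nabla u=\sum_{k}\Big(\nabla u^T\nabla w_k+\nabla w_k^T\nabla u+\nabla w_k^T\nabla w_k\Big)+\sum_{k\neq l}\nabla w_k^T\nabla w_l.
\]
Each diagonal $k$-block reproduces the single-corrugation computation from \cite{CoDeSz2012,CaoSze2022}: invoking the identity $(1+\partial_t\Gamma_1)^2+(\partial_t\Gamma_2)^2=1+s_k^2$ collapses the principal part to $a_k^2\nabla\Phi_k\otimes\nabla\Phi_k$, and the remainder $E_k$ arises from the $1/\mu$ commutator produced by differentiating the smooth envelopes $a_k,\Phi_k,\tau_k,\zeta_k$. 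The standard bookkeeping, using $\|a_k\|_1\lesssim \delta^{1/2}\nu$, $\|\nabla\Phi_k\|_1\lesssim \nu$ and $\|\nabla\zeta_k\|_0\lesssim \delta^{1/2}\nu$, then gives $\|E_k\|_0\lesssim \delta\nu/\mu$ and $\|E_k\|_1\lesssim \delta\nu$, which accounts for the $\delta\nu\mu^{j-1}$ summand in \eqref{eq-step-4}.

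The key new point, and where I expect the main technical obstacle to lie, is the off-diagonal sum $\sum_{k\neq l}\nabla w_k^T\nabla w_l$. To leading order $\nabla w_k\approx \partial_t\Gamma_1^{(k)}\,\nabla\Phi_k\otimes\tau_k+\partial_t\Gamma_2^{(k)}\,\nabla\Phi_k\otimes\zeta_k$, with $\Gamma_i^{(k)}:=\Gamma_i(s_k,\mu\Phi_k)$, so $\nabla w_k^T\nabla w_l$ is a sum of four tensors $\nabla\Phi_k\otimes\nabla\Phi_l$ weighted by the scalar inner products $\tau_k\!\cdot\!\tau_l$, $\tau_k\!\cdot\!\zeta_l$, $\zeta_k\!\cdot\!\tau_l$ and $\zeta_k\!\cdot\!\zeta_l$. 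Three of these vanish pointwise by Lemma \ref{le-normal}: $\tau_k\!\cdot\!\zeta_l=\zeta_k\!\cdot\!\tau_l=0$ (the $\tau_\bullet$ lie in the tangent bundle of $u(\Omega)$, which is annihilated by the normal $\zeta_\bullet$), and $\zeta_k\!\cdot\!\zeta_l=0$ whenever $k\neq l$. Only the tangent-tangent term survives, and its scalar factor is bounded by $|\partial_t\Gamma_1(s_k,\cdot)\,\partial_t\Gamma_1(s_l,\cdot)|\le Cs_k^2s_l^2\le C\delta^2$ via \eqref{eq-gamma-c0} and $\|a_k\|_0\le M\delta^{1/2}$. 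Each spatial derivative of this oscillatory factor brings down at most a single factor $\mu$ (derivatives of the smooth envelopes contribute strictly lower-order terms), so $\|\nabla w_k^T\nabla w_l\|_j\le C\delta^2\mu^j$ for $j=0,1$. Summing over the $O((m-n)^2)$ cross pairs and combining with the diagonal estimate yields \eqref{eq-step-4} with a constant $\overline M$ depending only on $M$ and $\gamma$, completing the proof.
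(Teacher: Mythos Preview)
Your overall strategy---superposing $m-n$ Kuiper corrugations along mutually orthogonal normal directions, and exploiting the orthogonalities $\zeta_k\cdot\zeta_l=\delta_{kl}$, $\tau_k\cdot\zeta_l=0$ together with the key observation $|\partial_t\Gamma_1|\lesssim s^2\lesssim\delta$ to control the tangent--tangent cross terms by $\delta^2\mu^j$---is exactly the mechanism the paper uses. The $\delta^2$ off-diagonal estimate is the heart of the matter, and you identified it correctly.

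However, there is a genuine regularity gap. You apply Lemma~\ref{le-normal} with $N=1$ directly to $u\in C^2$, which only yields $\zeta_k\in C^1$; likewise your $\tau_k$, built from $\nabla u$, is only $C^1$. But the conclusion requires $v\in C^2$ with $\|v\|_2\leq C\delta^{1/2}\mu$, and the metric error estimate \eqref{eq-step-4} at $j=1$ requires differentiating $\nabla v^T\nabla v$, hence $\nabla^2 w_k$. Since $w_k$ contains the term $\tfrac{1}{\mu}\Gamma_2\,\zeta_k$, its second derivative involves $\tfrac{1}{\mu}\Gamma_2\,\nabla^2\zeta_k$, which is not defined. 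The paper resolves this by first mollifying $u$ at length-scale $\mu^{-1}$ to obtain $\tilde u\in C^\infty$ with $\|\tilde u\|_3\leq C\delta^{1/2}\nu\mu$, and building $\xi_k,\zeta_k$ from $\tilde u$; this produces $\|(\xi_k,\zeta_k)\|_2\leq C\nu\mu$, which is precisely what is needed. The price is that the orthogonalities $\nabla\tilde u^T\zeta_k=0$ hold for $\tilde u$, not $u$, so an extra error $(\nabla u-\nabla\tilde u)^TB_k$ appears---but this is $O(\delta\nu\mu^{-1})$ and harmless.

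A secondary point: your explicit choice $\tau_k=\nabla u\,\nabla\Phi_k/|\nabla\Phi_k|^2$ does not produce the required diagonal identity, because $\nabla u^T\tau_k=(\nabla u^T\nabla u)\nabla\Phi_k/|\nabla\Phi_k|^2$ is not parallel to $\nabla\Phi_k$ when $\nabla u^T\nabla u\neq c\,\mathrm{Id}$. The correct tangent vector (as in \cite{CoDeSz2012,CaoSze2022} and the paper) is $\xi_k=\tilde\xi_k/|\tilde\xi_k|^2$ with $\tilde\xi_k=\nabla\tilde u(\nabla\tilde u^T\nabla\tilde u)^{-1}\nabla\Phi_k$, paired with the amplitude $\tilde a_k=|\tilde\xi_k|\,a_k$ and the rescaled normal $\zeta_k=\tilde\zeta_k/(|\tilde\zeta_k||\tilde\xi_k|)$. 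With this normalisation the identity $(1+\partial_t\Gamma_1)^2+(\partial_t\Gamma_2)^2=1+\tilde a_k^2$ collapses the diagonal block exactly to $a_k^2\nabla\Phi_k\otimes\nabla\Phi_k$.
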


\begin{proof}
Initially taking $c_0\geq 1$ we have $\nu\leq\tilde\nu<\mu.$ We regularize $u$ on length-scale $\mu^{-1}$ to achieve smooth embedding $\tilde{u}$ satisfying
\begin{equation}\label{eq-tildeu-C2}
\|\tilde{u}-u\|_1\leq C(M)\delta^{1/2}\nu\mu^{-1} ,\quad \|\tilde{u}\|_{2}\leq C(M)\delta^{1/2}\nu,\quad \|\tilde{u}\|_{3}\leq C(M)\delta^{1/2}\nu\mu.
\end{equation}
Then by Proposition \ref{pr-mollification}, we immediately have $\|\nabla u-\nabla\tilde u\|_0\leq C(M)\delta^{1/2}\nu\mu^{-1}$ and then
\begin{equation}\label{eq-tildeu-C1}
{(2\gamma)}^{-1}\mathrm{Id}\leq \nabla\tilde u^T\nabla \tilde u\leq 2\gamma \mathrm{Id},
\end{equation}
provided that $c_0$ is large enough such that $c_0\geq 2\gamma C(M)$. Hence $\nabla\tilde u^T\nabla \tilde u$ is invertible and $\tilde u(\Omega)$ admits $m-n$ mutually orthogonal normal vectors $\{\tilde\zeta_k\}_{k=1}^{m-n}$ from Lemma \ref{le-normal}. Define
\begin{align*}
&\tilde\xi_k=\nabla \tilde{u}(\nabla \tilde{u}^T\nabla \tilde{u})^{-1}\nabla\Phi_k,\quad \xi_k=\frac{\tilde\xi_k}{|\tilde\xi_k|^2},\quad  \tilde{a}_k=|\tilde\xi_k|a_k,\quad \zeta_k=\frac{\tilde\zeta_k}{|\tilde\zeta_k||\tilde\xi_k|},
\end{align*}
Directly from construction, we have
\begin{equation}\label{eq-xizeta}
\nabla \tilde u^T\xi_k=\frac{\nabla\Phi_k}{|\tilde\xi_k|^2},\quad \nabla\tilde u^T\zeta_k=\nabla\tilde u^T\tilde\zeta_k=0, \quad \xi_k^T\zeta_{l}=0~(k, l=1,\cdots, m-n).
\end{equation}
It follows from \eqref{eq-tildeu-C2}-\eqref{eq-tildeu-C1} and \eqref{eq-step-u} that
\begin{equation}\label{eq-xizeta-est}
\begin{split}
\|(\xi_k, \zeta_k)\|_0&\leq C(\gamma), \quad \|(\xi_k,\zeta_k)\|_1\leq C(\gamma, M)\nu,\\
\|(\xi_k,\zeta_k)\|_2&\leq C(\gamma, M)\nu(\delta^{1/2}\mu+\tilde\nu)\leq C(\gamma, M)\nu\mu,
\end{split}
\end{equation}
and
\begin{equation}\label{eq-acm}
\begin{split}
\|\tilde a_k\|_0&\leq C(\gamma)\delta^{1/2},\quad
\|\tilde a_k\|_1%\leq C(\|a_k\|_1\||~\tilde\xi_k|~\|_0+\|a_k\|_0\|~|\tilde\xi_k|~\|_1)
\leq C(\gamma, M)\delta^{1/2}\nu, \\
\|\tilde a_k\|_2&\leq C(\|a_k\|_2\|~|\tilde\xi_k|~\|_0+\|a_k\|_0\|~|\tilde\xi_k|~\|_2)
\leq C(\gamma, M)\delta^{1/2}\nu\mu,
\end{split}
\end{equation}
where the fact that $\tilde\nu\leq \mu$ is used in the last inequality. Define
$$
v=u+\frac{1}{\mu}\sum_{k=1}^{m-n}\bigl(\Gamma_1(\tilde a_k, \mu \Phi_k)\xi_k+\Gamma_2(\tilde a_k, \mu\Phi_k)\zeta_k\bigr).
$$
From the construction of $\Gamma$ in Lemma \ref{le-gamma}, \eqref{eq-step-1} follows.
Denoting 
$$
\Gamma_{ik}=\Gamma_i(\tilde a_k,\mu\Phi_k),\, \partial_t\Gamma_{ik}=\partial_t\Gamma_{i}(\tilde a_k,\mu\Phi_k), \, \partial_s\Gamma_{ik}=\partial_s\Gamma_{i}(\tilde a_k,\mu\Phi_k)
$$
for $i=1, 2$, by Proposition \ref{pr-composition} and \eqref{eq-interpolation} we have
\begin{equation}\label{eq-interu}
\|v-u\|_{j}
\leq\frac{C}{\mu}\sum_{k=1}^n(\|\Gamma_{1k}\|_{j}\|\xi_k\|_{0}+\|\Gamma_{1k}\|_0\|\xi_k\|_{j}+
\|\Gamma_{2k}\|_{j}\|\zeta_k\|_0+\|\Gamma_{2k}\|_0\|\zeta_k\|_{j})
\end{equation}
for $j=0, 1, 2$, %. Therefore we need to estimate $\|\Gamma_{ik}\|_{j}$ for  $i=1,2$ and $j=0, 1, 2$,
where $\|\Gamma_{ik}\|_{j}$ denote the $C^j$-norms in $x\in\Omega$ of the composition 
$$
x\mapsto \Gamma_i(\tilde{a}_k(x), \mu\Phi_k(x)).
$$
Using Lemma \ref{le-gamma} and the assumptions \eqref{eq-step-a}-\eqref{eq-step-phi}, applying Proposition \ref{pr-composition}, we deduce that
\begin{equation}\label{eq-Gammaest1}
\begin{split}
&\|\Gamma_{1k}\|_0+\|\partial_t\Gamma_{1k}\|_0+\|\partial_t^2\Gamma_{1k}\|_0\leq C\|\tilde a_k^2\|_0\leq C(\gamma)\delta,\\
&\|\Gamma_{1k}\|_1\leq\|\partial_t\Gamma_{1k}\|_0\|\nabla\Phi_k\|_0\mu+\|\partial_s\Gamma_{1k}\|_0\|\nabla\tilde a_k\|_0\\
&\quad\quad\quad \leq C(\gamma)\delta\mu+C(M,\gamma)\delta\nu\leq C(\gamma)\delta\mu,\\
&\|\partial_t\Gamma_{1k}\|_1\leq\|\partial_t^2\Gamma_{1k}\|_0\|\nabla\Phi_k\|_0\mu
+\|\partial_s\partial_t\Gamma_{1k}\|_0\|\nabla\tilde a_k\|_0
\leq C(\gamma)\delta\mu,
\end{split}
\end{equation}
and
\begin{equation}\label{eq-Gammaest2}
\begin{split}
&\|\Gamma_{2k}\|_0+\|\partial_t\Gamma_{2k}\|_0+\|\partial_t^2\Gamma_{2k}\|_0\leq C\|\tilde a_k\|_0\leq C(\gamma)\delta^{1/2},\\
&\|\Gamma_{2k}\|_1\leq \|\partial_t\Gamma_{2k}\|_0\|\nabla\Phi_k\|_0\mu+\|\partial_s\Gamma_{2k}\|_0\|\nabla\tilde a_k\|_0
%&\leq C\delta^{1/2}\mu+C(M, \gamma)\delta^{1/2}\nu\\
\leq C(\gamma)\delta^{1/2}\mu,\\
&\|\partial_t\Gamma_{2k}\|_1\leq \|\partial_t^2\Gamma_{2k}\|_0\|\nabla\Phi_k\|_0\mu+\|\partial_s\partial_t\Gamma_{2k}\|_0\|\nabla\tilde a_k\|_0
\leq C(\gamma)\delta^{1/2}\mu,
\end{split}
\end{equation}
where we have chosen $c_0=c_0(M,\gamma)$ such that $\mu\geq C(M, \gamma)\nu$. Similarly, we also have
\begin{equation}\label{eq-Gammaest12}
\begin{split}
\|\partial_s\Gamma_{1k}\|_0&\leq C(\gamma)\|\tilde a_k\|_0\leq C(\gamma)\delta^{1/2},\quad
\|\partial_s\Gamma_{2k}\|_0\leq C(\gamma),\\
\|\partial_s\Gamma_{1k}\|_1&\leq \|\partial_t\partial_s\Gamma_{1k}\|_0\|\nabla\Phi_k\|_0\mu+\|\partial_s^2\Gamma_{1k}\|_0\|\nabla\tilde a_k\|_0\leq C(\gamma)\delta^{1/2}\mu,\\
\|\partial_s\Gamma_{2k}\|_1&\leq \|\partial_t\partial_s\Gamma_{2k}\|_0\|\nabla\Phi_k\|_0\mu+\|\partial_s^2\Gamma_{2k}\|_0\|\nabla\tilde a_k\|_0\leq C(\gamma)\mu.
\end{split}
\end{equation}
Thus by \eqref{eq-Gammaest1}-\eqref{eq-Gammaest12}, we derive
\begin{align*}
\|v-u\|_0&\leq C(\gamma)\delta^{1/2}\mu^{-1},\\
\|v-u\|_1&\leq C(\gamma)\delta^{1/2}+C(\gamma, M)\delta^{1/2}\nu\mu^{-1}\leq C(\gamma)\delta^{1/2},\\
\|v-u\|_2&\leq C(\gamma)\delta^{1/2}\mu+C(\gamma, M)\delta^{1/2}\nu\leq C(\gamma)\delta^{1/2}\mu.
\end{align*}
In a nutshell, \eqref{eq-step-2} is achieved since $\mu\geq c_0(M,\gamma)\nu\geq M\nu$ and \eqref{eq-step-3} also follows.
\smallskip

To estimate the  metric difference, we take gradient of $v$ and get
\begin{align*}
\nabla v=&\nabla u+\sum_{k=1}^{m-n}(\partial_t\Gamma_{1k}\xi_k\otimes\nabla\Phi_k+\partial_t\Gamma_{2k}\zeta_k\otimes\nabla\Phi_k)+
\sum_{k=1}^{m-n}\frac{1}{\mu}(\Gamma_{1k}\nabla \xi_k+\Gamma_{2k}\nabla \zeta_k)\\
&+\sum_{k=1}^{m-n}\frac{1}{\mu}(\partial_s\Gamma_{1k}\xi_k\otimes\nabla\tilde a_k+\partial_s\Gamma_{2k}\zeta_k\otimes\nabla\tilde a_k)\\
=&\nabla u+\sum_{k=1}^{m-n}(B_k+E_{1k}+E_{2k}),
\end{align*}
where based on the normal directions and tangential direction we have set $B_k=B_k^{(1)}+B_k^{(2)}$, $E_{1k}=E_{1k}^{(1)}+E_{1k}^{(2)}$ and $E_{2k}=E_{2k}^{(1)}+E_{2k}^{(2)}$ with
\begin{align*}
&B_k^{(1)}=\partial_t\Gamma_{1k} \xi_k\otimes\nabla\Phi_k,\quad B_k^{(2)}=\partial_t\Gamma_{2k}\zeta_k\otimes\nabla\Phi_k,\\
&E_{1k}^{(1)}=\frac{1}{\mu}\left(\Gamma_{1k}\nabla \xi_k+\frac{\Gamma_{2k}\nabla\tilde\zeta_k}{|\tilde\zeta_k||\tilde\xi_k|}-
\frac{\Gamma_{2k}\tilde\zeta_k\otimes\nabla|\tilde\zeta_k|}{|\tilde\zeta_k|^2|\tilde\xi_k|}\right),\quad
E_{1k}^{(2)}=-\frac{\Gamma_{2k}}{\mu}\frac{\tilde\zeta_k\otimes\nabla|\tilde\xi_k|}{|\tilde\zeta_k||\tilde\xi_k|^2},
\end{align*}
where we have calculated that
$$
\nabla \zeta_k=\frac{\nabla\tilde\zeta_k}{|\tilde\zeta_k||\tilde\xi_k|}
-\frac{\tilde\zeta_k\otimes\nabla|\tilde\zeta_k|}{|\tilde\zeta_k|^2|\tilde\xi_k|}
-\frac{\tilde\zeta_k\otimes\nabla|\tilde\xi_k|}{|\tilde\zeta_k||\tilde\xi_k|^2},
$$
and
$$
E_{2k}^{(1)}=\frac{1}{\mu}\partial_s\Gamma_{1k}\xi_k\otimes\nabla\tilde a_k,\quad
E_{2k}^{(2)}=\frac{1}{\mu}\partial_s\Gamma_{2k}\zeta_k\otimes\nabla\tilde a_k.
$$
With the notation $\textrm{sym}(H)=(H+H^T)/2$, the metric induced by $v$ can be written as
\begin{align*}%\label{e:metricerror1}
\nabla  v^T\nabla v=&\nabla u^T\nabla u+\sum_{k=1}^{m-n}2 \textrm{sym}(\nabla u)^TB_k+\sum_{k=1}^{m-n} B_k^TB_k\\
&+\sum_{k=1}^{m-n}2 \textrm{sym}(\nabla u+B_k)^T(E_{1k}+E_{2k})+\sum_{k=1}^{m-n}(E_{1k}+E_{2k})^T(E_{1k}+E_{2k})\\
&+\sum_{i,k=1, i\neq k}^{m-n}2\textrm{sym}(B_k+E_{1k}+E_{2k})^T(B_i+E_{1i}+E_{2i})
\end{align*}
By \eqref{eq-xizeta} and Lemma \ref{le-gamma}, we have
$$	
\nabla \tilde{u}^TB_k+B_k^T\nabla\tilde{u}+B_k^TB_k=a_k^2\nabla\Phi_k\otimes\nabla\Phi_k.
$$
By orthogonality, it holds that
\begin{equation}\label{eq-bkek-orth}
\nabla\tilde u^T(E_{1k}^{(2)}+E_{2k}^{(2)})=0,\quad (B_k^{(1)})^TB_i^{(2)}=(B_k^{(2)})^TB_i^{(2)}=0, \,\,i,k=1,\cdots, m-n,
\end{equation}
which play vital roles in later calculation. Hence we further have
\begin{equation}\label{eq-metricerror1}
\begin{split}
&\nabla  v^T\nabla v-\left(\nabla u^T\nabla u+\sum_{k=1}^{m-n} a_k^2\nabla\Phi_k\otimes\nabla\Phi_k\right)\\
=&\sum_{k=1}^{m-n}2\textrm{sym}[(\nabla u-\nabla\tilde u)^TB_k+(\nabla u-\nabla\tilde u+B_k)^T(E_{1k}^{(2)}+E_{2k}^{(2)})]\\
&+\sum_{k=1}^{m-n}2\textrm{sym} (\nabla u+B_k)^T(E_{1k}^{(1)}+E_{2k}^{(1)})+\sum_{k=1}^{m-n}(E_{1k}+E_{2k})^T(E_{1k}+E_{2k})\\
&+\sum_{i,k=1, i\neq k}^{m-n}2\textrm{sym}[(B_k^{(1)})^TB_i^{(1)}+B_k^T(E_{1i}+E_{2i})+(E_{1k}+E_{2k})^T(E_{1i}+E_{2i})].
\end{split}
\end{equation}
Comparing with the calculations in \cite[Proposition 3.1]{CaoSze2022}, the only difference is the existence of nonlinear interaction terms from tangential direction $(B_k^{(1)})^TB_i^{(1)} (i\neq k)$ in the last line of \eqref{eq-metricerror1}. However, with the special structure of the first corrugation, from the estimates \eqref{eq-xizeta-est}, \eqref{eq-acm}, \eqref{eq-Gammaest1}, \eqref{eq-Gammaest2} and \eqref{eq-Gammaest12},  we have the bounds
\begin{equation}\label{eq-bk1}
	\|B_k^{(1)}\|_0\leq C(\gamma)\delta, \quad \|B_k^{(2)}\|_0\leq C(\gamma)\delta^{1/2},
\end{equation}
which then gives
\begin{equation}\label{eq-bki1}
	(B_k^{(1)})^TB_i^{(1)}\leq C(\gamma)\delta^2.
\end{equation}
Furthermore, we obtain
\begin{equation}\label{eq-ek12}
	\begin{split}
\|E_{1k}^{(1)}\|_0&\leq C(\gamma, M)\mu^{-1}\delta\nu,\quad \|E_{2k}^{(1)}\|_0\leq C(\gamma, M)\mu^{-1}\delta\nu,\\
\|E_{1k}^{(2)}\|_0&+\|E_{2k}^{(2)}\|_0\leq C(\gamma, M)\mu^{-1}\delta^{1/2}\nu.
\end{split}
\end{equation}
Using $\mu\geq c_0\nu$,  from \eqref{eq-bki1} and \eqref{eq-ek12}, we finally deduce 
\begin{equation}\label{eq-metricerror0}
\left\|\nabla v^T\nabla v-\left(\nabla u^T\nabla u+\sum_{k=1}^{m-n}a_k^2\nabla\Phi_k\otimes\nabla\Phi_k\right)\right\|_0\leq C(\gamma, M)(\delta\mu^{-1}\nu+\delta^2).
\end{equation}
Similarly, using the Leibniz-rule we obtain
\begin{align*}
\|B_k^{(1)}\|_1&\leq C(\gamma, M)\delta\mu, \quad \|B_k^{(2)}\|_1\leq C(\gamma, M)\delta^{1/2}\mu,\\
\|E_{1k}^{(1)}\|_1&\leq C(\gamma, M)\delta\nu,\quad \|E_{2k}^{(1)}\|_1\leq C(\gamma, M)\delta\nu,\\
\|E_{1k}^{(2)}\|_1&+\|E_{2k}^{(2)}\|_1\leq C(\gamma, M)\delta^{1/2}\nu.
\end{align*}
Then it follows that
\begin{equation*}
\left\|\nabla v^T\nabla v-\left(\nabla u^T\nabla u+\sum_{k=1}^{m-n}a_k^2\nabla\Phi_k\otimes\nabla\Phi_k\right)\right\|_{1}\leq C(\gamma, M)(\delta\nu+\delta^2\mu),
\end{equation*}
which together with \eqref{eq-metricerror0} implies \eqref{eq-step-4}.

Finally, we shall verify that $v$ is a bounded embedding. It follows from \eqref{eq-metricerror0},  that
\begin{align*}
\left\|\nabla v^T\nabla v-\left(\nabla u^T\nabla u+\sum_{k=1}^{m-n}a_k^2\nabla\Phi_k\otimes\nabla\Phi_k\right)\right\|_0\leq \frac{1}{2\gamma},
\end{align*}
provided taking $c_0\geq 2\gamma C(M, \gamma)$ and $\delta< c_0^{-1}$. Using \eqref{eq-step-phi} and $\delta\leq 1$ we get
$$
0\leq \sum_{k=1}^{m-n}a_k^2\nabla\Phi_k\otimes\nabla\Phi_k\leq (m-n)M^2\mathrm{Id}.
$$
 By \eqref{eq-step-u}, we deduce \eqref{eq-step-0}, which also implies that $v$ is an immersion. The argument for that $v$ is an embedding is standard and we can also find a proof in \cite{DISz2018}. The proof is then completed.
\end{proof}

\bigskip

With a single step of perturbation established in Proposition \ref{pr-step}, we can prove Proposition \ref{pr-stage} through several steps. Due to the decomposition of metrics for different $n,$ the proof is divided into the following two subsections.

%%%%%%%%%%%%%%%%%%%%%%%%%%%%%%%%%%%%%%%%%%%%
\subsection{Proof for the case $n=2$}

The goal is to show that when all the conditions in Proposition \ref{pr-stage} hold for $n=2$ we can conclude \eqref{eq-stage-1} and
\begin{align}
\|v-u\|_0&\leq C\delta^{1/2}\lambda^{-\kappa}\leq C\delta^{1/2}\lambda^{-(\kappa+1)/2},\quad
\|v-u\|_1\leq C\delta^{1/2},%\label{eq-stage-v1}\\
\quad \|v\|_2\leq C\delta^{1/2}\lambda^\kappa,\label{eq-stage-v}\\
&\|\mathcal{E}\|_0\leq C\delta\lambda^{1-\kappa}+C\delta^2, \quad
\|\mathcal{E}\|_1\leq C\delta\lambda+C\delta^2\lambda^\kappa.\label{eq-stage-error}
\end{align}
Here $C\geq 1$ is the constant in Proposition \ref{pr-stage}.

With the help of Lemma \ref{le-conformal} and Proposition \ref{pr-step}, we are then able to add the term $\rho^2(G+H)$ through a single step.

\begin{proof}[Proof of Proposition \ref{pr-stage} when $n=2$] The proof proceeds entirely analogously to that of \cite[Corollary 3.1]{CaoSze2022}. We provide the proof here in detail for the convenience of the reader.
 We first obtains $\tilde \rho$, $\tilde G$ and $\tilde H$ by mollifying $\rho$, $G$ and $H$ respectively at length-scale $\ell=\lambda^{-\kappa}$. By \eqref{eq-stage-rho}-\eqref{eq-stage-H} and Proposition \ref{pr-mollification}, we have
\begin{equation}\label{eq-moli-rhogh}
\begin{split}
\|\tilde \rho\|_0\leq \delta^{1/2}, \quad &\|\tilde G\|_0\leq C, \quad \|\tilde H\|_{0}\leq \lambda^{-\alpha},\quad\\
\|\tilde \rho\|_j\leq C_j\delta^{1/2}\lambda\ell^{1-j},\quad &\|\tilde G\|_{j}\leq C_j(\gamma)\ell^{1-j},\quad  \|\tilde H\|_{j}\leq C_j\lambda^{1-\alpha}\ell^{1-j},\\
\|\tilde \rho-\rho\|_0\leq C\delta^{1/2}\lambda\ell,\quad&\|\tilde G-G\|_0\leq C(\gamma)\ell,\quad \|\tilde H-H\|_0\leq C\lambda^{1-\alpha}\ell\,,
\end{split}
\end{equation}
for all $j\geq 1$. Moreover, for any $0<\alpha'\leq \alpha<1$ it holds that
\begin{align*}	
\|\tilde G+\tilde H\|_{\alpha'}%&\leq \|\tilde G\|_{\alpha'}+\|\tilde H\|_{\alpha'}
%\leq C(\gamma+\lambda^{\alpha'-\alpha})
\leq C\gamma.
\end{align*}
Note that ${\gamma}^{-1}\mathrm{Id}\leq \tilde G\leq \gamma\mathrm{Id}$, which then implies that
$$
\tilde G+\tilde H\geq ({\gamma}^{-1}-\lambda^{-\alpha})\mathrm{Id}\geq ({2\gamma})^{-1}\mathrm{Id},
$$
provided that $\lambda^{\alpha}\geq 2\gamma$, which is ensured by taking $\lambda_*$ in \eqref{eq-stage-lambda0} large. Lemma \ref{le-conformal} can be used to get $\tilde a, \Phi=(\Phi_1, \Phi_2)$ such that
$$
\tilde G+\tilde H=\tilde a^2(\nabla\Phi_1\otimes\nabla\Phi_1+\nabla\Phi_2\otimes\nabla\Phi_2),
$$
$\tilde{a}\geq C(\gamma), \,\det(D\Phi)\geq C(\gamma),$ and
\begin{align*}
\|\tilde{a}\|_{j+\alpha'}+\|\nabla\Phi\|_{j+\alpha'}\leq C_j(\alpha',\gamma)\lambda^{1-\alpha}\ell^{1-j-\alpha'}.
\end{align*}
Taking $\alpha'\leq \frac{\alpha}{\kappa}$ and using $\ell=\lambda^{-\kappa},$ we directly have
\begin{equation}\label{eq-phi-estimate}
\begin{split}
&{C(\gamma)}^{-1}\leq|\nabla\Phi_1|,\,|\nabla\Phi_2|\leq C(\gamma);\\
&\|\nabla\Phi\|_{j}\leq C_j(\gamma)\lambda\ell^{1-j}\,,\quad
\|\tilde a\|_{j}\leq C_j(\gamma)\lambda\ell^{1-j}\,
\end{split}
\end{equation}
for any $j\geq 1$. Let $\mathbf{h}=\rho^2(G+H),\,\tilde{\mathbf{h}}=\tilde \rho^2(\tilde G+\tilde H)$ and
$a=\tilde a \tilde \rho$, then
\begin{equation}\label{eq-decompstage}
\tilde{\mathbf{h}}=a^2(\nabla\Phi_1\otimes\nabla\Phi_1+\nabla\Phi_2\otimes\nabla\Phi_2)
\end{equation}
and $a$ satisfies
\begin{equation}\label{eq-vartheta}
\|a\|_0\leq C(\gamma)\delta^{1/2},\quad
\|a\|_1\leq C(\gamma)\delta^{1/2}\lambda,\quad
\|a\|_2\leq  C(\gamma)\delta^{1/2}\lambda\ell^{-1}\,.
\end{equation}
Take $\delta_*=c_0^{-1}$ as in Proposition \ref{pr-step}. With \eqref{eq-phi-estimate}-\eqref{eq-vartheta}, for any $\delta<\delta_*,$ Proposition \ref{pr-step} can be applied with
$$
m=4, \, n=2,\quad \nu=\lambda, \quad \tilde\nu=\ell^{-1},\quad \mu=\lambda^\kappa\,,
$$
to yield a new embedding $v\in C^2(\Omega;\R^{4})$ satisfying
\begin{align*}
&v=u\textrm{ on }\Omega\setminus \supp \tilde{\mathbf{h}}=\Omega\setminus(\supp\rho+\B_{\lambda^{-\kappa}}(0)),\\
\|v-u\|_0\leq &C(\gamma)\delta^{1/2}\lambda^{-\kappa},\quad
\|v-u\|_1\leq C(\gamma)\delta^{1/2},\quad
\|v\|_2\leq C(\gamma)\delta^{1/2}\lambda^{\kappa},
\end{align*}
which implies \eqref{eq-stage-1} and \eqref{eq-stage-v}. Moreover, the new metric error
\begin{align*}
\mathcal{E}&=\nabla v^T\nabla v-(\nabla u^T\nabla u+\rho^2(G+H))
\end{align*}
satisfies
\begin{align*}
\|\mathcal{E}\|_0&\leq C\delta \lambda^{1-\kappa}+C\delta^2+\|\mathbf{h}-\tilde{\mathbf{h}}\|_0\\
\|\mathcal{E}\|_1&\leq C\delta\lambda+C\delta^2\lambda^\kappa+\|\mathbf{h}-\tilde{\mathbf{h}}\|_1\,.
\end{align*}
%Using
%\begin{align*}
%\tilde{\mathbf{h}}-\mathbf{h}=(G+H)(\tilde \rho^2-\rho^2)+\|\tilde \rho^2(\tilde G-G+\tilde H-H),
%\end{align*}
%and
By \eqref{eq-moli-rhogh}, it is not hard to get
\begin{equation}\label{eq-h-molify}
\begin{split}	
\|\tilde{\mathbf{h}}-\mathbf{h}\|_0\leq C\delta\lambda^{1-\kappa},\quad
\|\tilde {\mathbf{h}}-\mathbf{h}\|_1\leq C\delta\lambda\,.
\end{split}
\end{equation}
This concludes \eqref{eq-stage-error}.
\end{proof}

\subsection{Proof for the case $n\geq3$}
In this case, we shall derive \eqref{eq-stage-1} and
\begin{align}
&\|v-u\|_0\leq C\delta^{1/2}\lambda^{-(\kappa+1)/2}, \quad
\|v-u\|_1\leq C\delta^{1/2}, \quad
\|v\|_2\leq C\delta^{1/2}\lambda^{\frac{n+1}{2}(\kappa-1)+1}, \label{eq-stage-v-1}\\
&\|\mathcal{E}\|_0\leq C(\delta\lambda^{1-\kappa}+\delta^2), \quad
\|\mathcal{E}\|_1\leq C(\delta\lambda^{\frac{n-1}{2}(\kappa-1)+1}+\delta^2\lambda^{\frac{n+1}{2}(\kappa-1)+1}).\label{eq-stage-error-1}
\end{align}
Here $C\geq 1$ is the constant in Proposition \ref{pr-stage} and we have used the definition of $\theta(n).$

For $n\geq3$, the term  $\rho^2(G+H)$ can be decomposed using lemma \ref{le-decompose}.
Thus we can decompose the term $\rho^2(G+H)$ into $n_*=n\cdot(n+1)/2$ primitive metrics for $n\geq3.$
We shall note two facts: only $n$ normal vectors exist since the codimensions are only $2n-n=n$, and the factor $(n+1)/2$ may be not an integer. Hence, the later proof is divide into two subcases.

\subsubsection{Proof of Proposition \ref{pr-stage} for odd $n\geq3$} In this case, the factor $(n+1)/2$ is an integer and we can complete the proof through $(n+1)/2$ times application of Proposition \ref{pr-step}.

\begin{proof}
First mollify the term $\mathbf{h}=\rho^2(G+H)$ to get $\tilde{\mathbf{h}}=\tilde \rho^2(\tilde G+\tilde H)$ as in the case $n=2$ at the length scale $\ell=\lambda^{-\kappa}$. Then we have
$$
C(\gamma)\mathrm{Id}\geq\tilde G+\tilde H\geq (2\gamma)^{-1}\mathrm{Id}.
$$
Lemma \ref{le-decompose} implies
$$
\tilde G+\tilde H=\sum_{k=1}^{n_*}\tilde a_k^2\xi_k\otimes\xi_k,
$$
where each $\tilde a_k$ satisfy
\begin{equation*}
C(\gamma)^{-1}\leq\tilde{a}_k\leq C(\gamma), \quad \|\tilde a_k\|_{j}\leq C_j(\gamma)\lambda\ell^{1-j}\,
\end{equation*}
for any $j\geq 1$. Let $a_k=\tilde a_k \tilde \rho$, then
\begin{equation}\label{eq-decomp-nd}
\tilde{\mathbf{h}}=\sum_{k=1}^{n_*}a_k^2\xi_k\otimes\xi_k
\end{equation}
and $a_k=\sqrt{L_k(\tilde{\mathbf{h}})}$ satisfy
\begin{equation}\label{eq-ak}
\|a_k\|_0\leq C(\gamma)\delta^{1/2},\quad\|a_k\|_1\leq C(\gamma)\delta^{1/2}\lambda,\quad
\|a_k\|_2\leq C(\gamma)\delta^{1/2}\lambda\ell^{-1}\,.
\end{equation}

Set $u_0=u$, $\gamma_0=\gamma$ and $\delta_*=c_0^{-1}$ as in Proposition \ref{pr-step}. From \eqref{eq-ak} and \eqref{eq-stage-u}, all the conditions of Proposition \ref{pr-step} are satisfied for $u_0, \gamma_0$ and $m=2n,$  with $\delta_0=\delta,\,\nu_0=\lambda, \, \tilde\nu_0=\lambda^\kappa,\, \Phi_k=x\cdot\xi_k.$
Successive application of Proposition \ref{pr-step} to obtain $\{u_l\}_{l=1,\cdots,\frac{n+1}{2}}$ with constants
\begin{align*}
\delta_l=\delta,\quad\nu_{l}=\tilde\nu_l=\lambda_{l},\quad \lambda_l=\lambda K^{l}, \,K=\lambda^{\kappa-1}, \quad \gamma_l=\bar\gamma_{l-1},
\end{align*}
and for $l=1,\cdots, \frac{n+1}{2},$
\begin{equation}\label{eq-stepjth}
\begin{split}
&\|u_{l}-u_{l-1}\|_1\leq C(\gamma)\delta^{1/2}\lambda_l^{-1}\\
&\|u_{l}-u_{l-1}\|_1\leq C(\gamma)\delta^{1/2},\quad \|u_{l}\|_2\leq C(\gamma)\delta^{1/2}\lambda_l,\\
&\left\|\nabla u_{l}^T\nabla u_{l}-\left(\nabla u_{l-1}^T\nabla u_{l-1}+ \sum_{k=n(l-1)+1}^{nl}a_k^2\xi_k\otimes\xi_k\right)
\right\|_j\\
&\quad \leq C(\gamma_l)(\delta\lambda_{l-1}\lambda_l^{j-1}+\delta^2\lambda_l^j),\quad j=0, 1.
\end{split}
\end{equation}
Then $v=u_{\frac{n+1}{2}}\in C^2(\Omega,\R^{2n})$ is the desired embedding satisfying
\begin{align*}
\|v-u\|_0\leq&\sum_{l=1}^{\frac{n+1}{2}}\|u_l-u_{l-1}\|_0\leq C(\gamma)\delta^{1/2}\lambda^{-\kappa},\\
\|v-u\|_1\leq&\sum_{l=1}^{\frac{n+1}{2}}\|u_l-u_{l-1}\|_1\leq C(\gamma)\delta^{1/2},\\
\|v\|_{2}\leq& C(\gamma)\delta^{1/2}\lambda K^{\frac{n+1}{2}},
\end{align*}
from which \eqref{eq-stage-v-1} follows. Set the new metric error as
\begin{align*}
\mathcal{E}&=\nabla v^T\nabla v-(\nabla u^T\nabla u+\rho(G+H))\\
&=\mathcal{E}_0+\tilde{\mathbf{h}}-\mathbf{h},
\end{align*}
where
\begin{align*}
\mathcal{E}_0&=\nabla v^T\nabla v-\left(\nabla u^T\nabla u+\sum_{k=1}^{n_*}a_k^2\xi_k\otimes\xi_k\right)\\
&=\sum_{l=1}^{\frac{n+1}{2}}\left(\nabla u_l^T\nabla u_l-\left(\nabla u_{l-1}^T\nabla u_{l-1}+\sum_{k=n(l-1)+1}^{nl}a_l^2\xi_l\otimes\xi_l\right)\right).
\end{align*}
\eqref{eq-stage-v-1} can be obtained as in the case $n=2$.
By \eqref{eq-stepjth} we obtain
\begin{align*}
\|\mathcal{E}_0\|_0&\leq \sum_{l=1}^{\frac{n+1}2}C(\gamma)(\delta\lambda_{l-1}\lambda_l^{-1}+\delta^2)
\leq C(\gamma)(\delta\lambda^{1-\kappa}+\delta^2),\\
\|\mathcal{E}_0\|_1&\leq \sum_{l=1}^{\frac {n+1}2}C(\gamma)(\delta\lambda_{l-1}+\delta^2\lambda_{l})
\leq C(\gamma)(\delta\lambda^{\frac{n-1}{2}(\kappa-1)+1}+\delta^2\lambda^{\frac{n+1}{2}(\kappa-1)+1}).
\end{align*}
which together with \eqref{eq-h-molify} implies \eqref{eq-stage-error-1}.
\end{proof}
\smallskip
\subsubsection{Proof of Proposition \ref{pr-stage} for even $n\geq3$}\label{s-even3}

When $n$ is even, we shall perform $n/2+1$ steps in one stage. If we apply Proposition \ref{pr-step}, then the bound of the $\|v\|_2$ will be larger than that in Proposition \ref{pr-stage}, which is the key difficulty. To overcome this obstacle, we will make use of the strategy, originating from \cite{Kallen1978}, which is based on decomposing not just the metric error term $\rho^2(G+H)$ based on Lemma \ref{le-decompose}, but, as in \cite{DI2020,CaoIn2020}, absorb part of new error terms. To this end we use the following variant of Lemma \ref{le-decompose} from \cite{DI2020}:

\begin{lemma}\label{le-perturb}
Let $N_0\leq n_*$ be some integer and $P_0\in \R^{n\times n}_{sym,+}$ with $\mathsf{osc}_\Omega P_0\leq \frac14\sigma_0$. There exists a geometric constant $\sigma_1>0$ and vectors $\xi_1,\cdots,\xi_k,\cdots \xi_{n*} \in \mathbb S^{n-1}$ with the following property.
%If $G_0:\bar \Omega \to \R^{n\times n}$  is a matrix field with $\gamma^{-1}\mathrm{Id}\leq G_0\leq \gamma \mathrm{Id}$ and
If  $P$, $\{\Lambda_k\}_{k=1}^{N_0},$ and $\{\Theta_{kl}\}_{k, l=1}^{N_0}\in C^1(\bar\Omega; \R^{n\times n}_{sym})$ fulfill
 \begin{equation}\label{eq-peturb}
 \|P-P_0\|_0+\sum_{k=1}^{N_0}\|\Lambda_k\|_0+\sum_{k,l=1}^{N_0}\|\Theta_{kl}\|_0\leq \sigma_1,
 \end{equation}
then there exist $C^1$ functions $a_1, \cdots, a_{n_*}:\bar\Omega\to\R$  given as
\begin{equation}\label{eq-a_i}
a_i(x) = \Psi_i(P(x), \{ \Lambda_k(x)\},\{\Theta_{kl}(x)\})\,
\end{equation}
satisfying
\begin{equation*}%\label{eq-decomp-p}
P=\sum_{i=1}^{n_*}a_i^2\xi_i\otimes\xi_i+\sum_{k=1}^{N_0}a_k\Lambda_k+\sum_{k,l=1}^{N_0}a_ka_l\Theta_{kl}\,,
\end{equation*}
and
\begin{equation}\label{eq-aicj}
\|a_i\|_{j}\leq C_{j}\left(\|P\|_{j}+\sum_{k=1}^{N_0}\|\Lambda_k\|_j+\sum_{k,l=1}^{N_0}\|\Theta_{kl}\|_j\right),
\end{equation}
for $j=0, 1$ and $1\leq i\leq n_*.$ Here, the constants $C_{j}\geq 1$ depend only on $j, \sigma_0,\sigma_1$.
\end{lemma}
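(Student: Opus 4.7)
\textbf{Sketch of a proof of Lemma \ref{le-perturb}.} The plan is to build the profile functions $\Psi_i$ through a fixed-point argument sitting on top of the basic Nash decomposition of Lemma \ref{le-decompose}. First I would apply Lemma \ref{le-decompose} to the reference value $P_0$ to obtain the fixed directions $\xi_1,\dots,\xi_{n_*}\in\mathbb{S}^{n-1}$ and linear maps $L_i:\R^{n\times n}_{sym}\to\R$ satisfying $Q=\sum_i L_i(Q)\,\xi_i\otimes\xi_i$ and $L_i(Q)\geq c_0>0$ for all $Q$ with $|Q-P_0|\leq\sigma_0$. The assumption $\osc_\Omega P_0\leq \sigma_0/4$ together with \eqref{eq-peturb} should ensure that the modified matrix
\begin{equation*}
\wt P(a):=P-\sum_{k=1}^{N_0}a_k\Lambda_k-\sum_{k,l=1}^{N_0}a_k a_l\,\Theta_{kl}
\end{equation*}
stays in this positivity neighborhood for every candidate vector $a=(a_1,\dots,a_{N_0})$ of size at most, say, $\sqrt{\sigma_0}$.

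Next I would recast the desired identity as a fixed-point equation for the first $N_0$ components. Pointwise in $x\in\Omega$, consider the map $T:\R^{N_0}\to\R^{N_0}$ defined by
\begin{equation*}
T(a)_i:=\sqrt{L_i(\wt P(a))},\qquad i=1,\dots,N_0.
\end{equation*}
When all perturbations vanish, $T$ reduces to the constant $(\sqrt{L_i(P)})_{i\leq N_0}$. For $\sigma_1$ small enough the Jacobian $D_a T$ is controlled by $\sum\|\Lambda_k\|_0+\sum\|\Theta_{kl}\|_0\lesssim\sigma_1$, and $T$ is a contraction on a small ball around the unperturbed value. Banach's theorem then supplies a unique fixed point $a^*(x)$ depending smoothly on the pointwise data $(P,\{\Lambda_k\},\{\Theta_{kl}\})$; this defines the profiles $\Psi_1,\dots,\Psi_{N_0}$ appearing in \eqref{eq-a_i}. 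I then set $a_j(x):=\sqrt{L_j(\wt P(a^*(x)))}$ for $j=N_0+1,\dots,n_*$, so that applying Lemma \ref{le-decompose} to $\wt P$ automatically produces the claimed decomposition of $P$.

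For the estimates \eqref{eq-aicj}, the $C^0$ bound is immediate from $|\Psi_i|\leq C\sqrt{|L_i(\wt P)|}\leq C\|P\|_0^{1/2}\leq C\|P\|_0$ (after invoking the smallness of the perturbations to ensure $\|P\|_0$ is bounded below and above). The $C^1$ bound proceeds by implicit differentiation of the fixed-point relation: since $\mathrm{Id}-D_a T$ is uniformly invertible inside the contraction ball, the chain rule expresses $\nabla a_i$ as a bounded linear combination of $\nabla P$, $\nabla\Lambda_k$ and $\nabla\Theta_{kl}$ with coefficients uniformly controlled by $\sigma_0,\sigma_1$, which yields exactly the right-hand side of \eqref{eq-aicj}. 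The main obstacle will be to pin down a single universal $\sigma_1$ that simultaneously guarantees (i) that $\wt P(a)$ remains in the positivity neighborhood of the $L_i$, (ii) that $T$ is a genuine contraction on a ball large enough to trap its fixed point, and (iii) that the resulting implicit-function derivative estimate absorbs the quadratic terms $a_k a_l\Theta_{kl}$ into the linear bound \eqref{eq-aicj} without accumulating spurious factors. All three follow from a quantitative implicit function theorem, but must be arranged in a single pass with dimensional constants.
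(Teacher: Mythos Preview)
Your approach is correct and is essentially the standard one: the paper itself does not supply a proof but simply refers to Proposition~5.4 in \cite{DI2020}, whose argument is precisely a quantitative implicit function theorem (equivalently, a contraction-mapping argument) built on top of the linear Nash decomposition of Lemma~\ref{le-decompose}, exactly as you outline. Your identification of the fixed-point map $T(a)_i=\sqrt{L_i(\wt P(a))}$ on the first $N_0$ coordinates, followed by the definition of the remaining $a_j$ via the same formula, is the right reduction, and your remarks on obtaining \eqref{eq-aicj} by implicit differentiation together with the uniform invertibility of $\mathrm{Id}-D_aT$ are on target.
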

 \noindent Although Lemma \ref{le-perturb} is about the perturbation with indexes $k, l$ varying only from 1 to $N_0(\leq n_*)$,  its proof is same as Proposition 5.4 in \cite{DI2020}.

 However, codimensions are not enough to absorb all errors as in \cite{DI2020, CaoIn2020}. The key point is to apply such decomposition to absorb the error resulting from  the first step when adding the first $n/2$ primitive metrics. The remained ${n^2}/{2}$ primitive metrics are then added through $n/2$ times using Proposition \ref{pr-step}.

 \begin{proof} In the first step we apply the decomposition in Lemma \ref{le-perturb} and use Nash's spirals to add the first $n/2$ primitive metrics. 

We regularize $u$ at length scale $\lambda^{-\tau}$  to get $\tilde u\in C^{\infty}(\bar \Omega)$ with $\tau=(\kappa+1)/{2}>1$ after taking $\kappa>1$.
It then follows that
$$
(2\gamma)^{-1}\mathrm{Id}\leq\nabla\tilde u^T\nabla\tilde u\leq (2\gamma)\mathrm{Id},
$$
provided taking $\lambda_*$ large enough such that $\lambda^{1-\tau}\leq C(\gamma)^{-1}$ for some constant $C(\gamma)$. 
Thus by Lemma \ref{le-normal}, there exist
 $n$ unit normal vectors $\{\zeta_k, \eta_k, k=1,\cdots, n/2\}$ to the manifold $\tilde u(\bar\Omega)$ satisfying the estimates \eqref{eq-normal}. Fix the vectors $\xi_1, \cdots, \xi_{n_*}\in \mathbb S^{n-1}$ in Lemma \ref{le-perturb} for $\gamma\geq 1$ here. For $1\leq k\leq n/2$, define as in \cite{DI2020,CaoIn2020}
\begin{align*}
A_k=&\cos(\lambda^\tau\xi_k\cdot x)\zeta_k\otimes\xi_k-\sin(\lambda^\tau\xi_k\cdot x)\eta_k\otimes\xi_k,\\
B_k=&\sin(\lambda^\tau\xi_k\cdot x)\nabla\zeta_k+\cos(\lambda^\tau\xi_k\cdot x)\nabla\eta_k,\\
D_k=&\sin(\lambda^\tau\xi_k\cdot x)\zeta_k+\cos(\lambda^\tau\xi_k\cdot x)\eta_k\,,
\end{align*}
which satisfy
\begin{equation}\label{eq-abdk}
\begin{split}
&\|A_k\|_0+\|D_k\|_0\leq C(1+\|\nabla\tilde u\|_0)\leq C,\\
&\|A_k\|_1+\|D_k\|_1\leq C(\lambda^\tau\|\nabla\tilde u\|_0+\|\nabla^2\tilde u\|_0)\leq C\lambda^\tau,\\
&\|B_k\|_0%\leq C\|\nabla^2\tilde u\|_0
\leq C\delta^{1/2}\lambda,\qquad
\|B_k\|_1%\leq C(\|\nabla^2\tilde u\|_0\lambda^\tau+\|\tilde u\|_3)
\leq C\delta^{1/2}\lambda^{1+\tau}\,.
\end{split}
\end{equation}
Using $\nabla\tilde u^TA_k=0,$ we get
\begin{equation}\label{eq-uak}
\begin{split}
\|\nabla u^T A_k\|_0%=&\|(\nabla u-\nabla \tilde u)^T A_k\|_0
\leq C\delta^{1/2}\lambda^{1-\tau},\quad
\|\nabla u^TA_k\|_1\leq%& C(\|\nabla\tilde u\|_1\|A_k\|_0+\|\nabla u-\nabla\tilde u\|_0\|A_k\|_1)\leq
C\delta^{1/2}\lambda\,,
\end{split}
\end{equation}
and for $\nabla u^T D_k$ it also holds that
\begin{equation}\label{eq-udk}
\|\nabla u^T D_k\|_0 \leq C \delta^{1/2}\lambda^{1-\tau}\,, \quad \|\nabla u^{T}D_k\|_1 \leq C\delta^{1/2} \lambda\,.
\end{equation}
Set
 \begin{align*}
 \Lambda_k=&2\,\textrm{sym}(\nabla u^T A_k)+2\lambda^{-\tau}\textrm{sym}(\nabla u^TB_k)\,,\\
 \Theta_{kl}=&2\lambda^{-\tau}\textrm{sym}(A_k^TB_l)+2\lambda^{-2\tau}\textrm{sym}(B_k^TB_l).
 \end{align*}
Taking value of \eqref{eq-abdk}-\eqref{eq-udk}, we deduce
\begin{equation}\label{eq-LTkl}
\begin{split}
\|\Lambda_k\|_0 %\leq & C(\|\nabla u^TA_k\|_0+\lambda^{-\tau}\|\nabla u\|_0\|B_k\|_0)
\leq& C\delta^{1/2}\lambda^{1-\tau}\,, \quad \|\Theta_{kl}\|_0\leq C\delta^{1/2}\lambda^{1-\tau}\,,\\
\|\Lambda_k\|_1\leq& C\left (\|\nabla u^TA_k\|_1+\lambda^{-\tau}\left (\|\nabla u\|_1\|B_k\|_0+\|\nabla u\|_0\|B_k\|_1\right ) \right )\leq C\delta^{1/2}\lambda\,,\\
%\|\Theta_{kl}\|_0\leq &C\lambda^{-\tau}(\|A_k\|_0\|B_l\|_0+\lambda^{-\tau}\|B_k\|_0\|B_l\|_0)\leq C\delta^{1/2}\lambda^{1-\tau},\\
\|\Theta_{kl}\|_1\leq &C\lambda^{-\tau}(\|A_k\|_1\|B_l\|_0+\|A_k\|_0\|B_l\|_1+\lambda^{-\tau}(\|B_k\|_1\|B_l\|_0+\|B_k\|_0\|B_l\|_1))\\
\leq& C(\delta^{1/2}\lambda+\delta\lambda^{2-\tau})\leq C\delta^{1/2}\lambda.
\end{split}
\end{equation}
To absorb the linear error with respect to $\rho,$ as in \cite{CaoIn2020}, we define $\psi\in C^{\infty}([0,\infty))$ as a monotone decreasing function of $\rho$ such that
\begin{equation}\label{eq-cutoff}
	 \psi(\rho) =\begin{cases} \,\,{\rho}^{-1} \quad \, \text{ if } \rho\geq 2\epsilon^{1/2},\\
\epsilon^{-1/2} \text{ if } \rho \leq \epsilon^{1/2}\,,\end{cases}
\end{equation}
which satisfies
\[ \|\psi(\rho(\cdot))\|_0   \leq C\epsilon^{-1/2} \,,\quad \|\psi(\rho(\cdot))\|_1 \leq C\epsilon^{-1}\delta^{1/2}\lambda\,,\]
deriving from \eqref{eq-stage-rho}, and
\begin{equation}\label{eq-rholambda-k}
%\begin{split}
\| \psi(\rho) \Lambda_k \|_0 \leq C \epsilon^{-1/2}\delta^{1/2} \lambda^{1-\tau}\,,\quad
\|\psi(\rho) \Lambda_k\|_1 \leq %C\left (\epsilon^{-1/2}\delta^{1/2}\lambda + \epsilon^{-1}\delta \lambda^{2-\tau}\right ) \leq
C\epsilon^{-1/2}\delta^{1/2}\lambda \,,
%\end{split}
\end{equation}
where
%\begin{equation}\label{de-epsilon}
\[\epsilon^{1/2}= C_0(\gamma, \sigma_1) \delta^{1/2}\lambda^{1-\tau}\,,\]
%\end{equation}
and $C_0(\gamma,\sigma_1)\geq 1$ is chosen such that
$$\|H\|_0+\sum_{k=1}^{\frac n2}\|\psi(\rho)\Lambda_k\|_0+\sum_{k, l=1}^{\frac n2}\|\Theta_{kl}\|_0\leq\frac{\sigma_1}{2}+C\epsilon^{-1/2}\delta^{1/2}\lambda^{1-\tau} < \sigma_1.$$
We can also take $\lambda_*$ large enough to get $\epsilon^{1/2}\leq\delta^{1/2}$.  Lemma \ref{le-perturb} can be applied with $P_0 = G$, $P=G+H$, $N_0=n/2$ to get $n_*$ functions $\{a_i\}_{i=1}^{n_*}\subset C^{1}(\bar\Omega)$ such that
\begin{equation*}
G+H=\sum_{i=1}^{n_*}a_i^2\xi_i\otimes\xi_i+\sum_{k=1}^{\frac n2}a_k\psi(\rho)\Lambda_k+\sum_{k,l=1}^{\frac n2}a_ka_k\Theta_{kl}\,,
\end{equation*}
and then
\begin{equation}\label{eq-rho-g-h-deco}
\rho^{2}(G+H)=\sum_{i=1}^{n_*}(\rho a_i)^2\xi_i\otimes\xi_i+\sum_{k=1}^{\frac n2}\rho^{2}\psi(\rho) a_k \Lambda_k+\sum_{k,l=1}^{\frac n2}(\rho a_k) (\rho a_l)\Theta_{kl}\,.
\end{equation}
By \eqref{eq-aicj}, \eqref{eq-LTkl} and \eqref{eq-rholambda-k},  we have  for $i=1, \cdots, n_*,$
\begin{equation}\label{eq-ai-bound}
\begin{split}
0\leq a_i
%\leq& C\left(\|G\|_0+\|H\|_0+\sum_{j=1}^{\frac n2}\|\psi(\rho)\Lambda_j\|_0+\sum_{i,j=1}^{\frac n2}\|\Theta_{ij}\|_0\right)
\leq C,\quad %\\
\|a_i\|_1
%\leq& C\left(\|G\|_1+\|H\|_1+\sum_{j=1}^{\frac n2}\|\psi(\rho)\Lambda_j\|_1+\sum_{i,j=1}^{\frac n2}\|\Theta_{ij}\|_1\right)
\leq C\epsilon^{-1/2}\delta^{1/2}\lambda.
\end{split}
\end{equation}
Moreover, from \eqref{eq-a_i} and \eqref{eq-rholambda-k}, we get sharper estimate
\begin{align*}
&\|\rho \nabla a_i \|_0 \leq C\|\rho\nabla(\psi(\rho)\Lambda_k)\|_0
\leq C\left (\|\rho\psi'(\rho) \Lambda_k\nabla\rho  \|_0+\|\rho\psi(\rho)\nabla\Lambda_k\|_0\right )
%\leq&C(\epsilon^{-1/2}\delta\lambda^{2-\tau}+\delta^{1/2}\lambda)
\leq C\delta^{1/2}\lambda\,,
\end{align*}
where we have used that $|\rho\psi'(\rho)|\leq C\epsilon^{-1/2}$ and $|\rho\psi(\rho)|\leq C.$ Then it holds that
\begin{equation}\label{eq-ai-bound2}
 \| \rho a_i \|_1 \leq C(\|a_i\nabla\rho\|_0+\|\rho\nabla a_i\|_0)\leq C\delta^{1/2}\lambda \,.
\end{equation}
 For any $i=1, \cdots, n_*,$ set $b_i := \rho a_i$ and mollify $b_i$ at length scale  $\lambda^{1-2\tau}$ to get $\tilde b_i$, satisfying that for any $j\in \N$
%\begin{align*}
%\|\tilde \rho\|_0\leq \delta^{1/2}, \quad & \|\tilde a_k\|_{0}\leq C(\gamma,\sigma_0, r_0),\\
%\|\tilde \rho\|_{j+1}\leq C(j)\delta^{1/2}\lambda^{2\tau j-j+1},\quad & \|\tilde a_k\|_{j+1}\leq C_j(\gamma,\sigma_0, r_0)\lambda^{2\tau j-j+1},\\
%\|\tilde \rho-\rho\|_0\leq C\delta^{1/2}\lambda^{2-2\tau},\quad  &\|\tilde a_k-a_k\|_0\leq C(\gamma,\sigma_0, r_0)\lambda^{2-2\tau},
%\end{align*}
%for all $j\geq 0$. Hence, we infer
\begin{equation}\label{eq-tilde-bk}
\begin{split}
&\| \tilde b_i \|_0 \leq  C \delta^{1/2},\quad \|\tilde b_i -b_i \|_{0} \leq C \delta^{1/2}\lambda^{2-2\tau}\,,\quad
\|\tilde b_i \|_{j+1} \leq  C_j \delta^{1/2}\lambda^{(2\tau-1)j +1},
\end{split}
\end{equation}
deriving from \eqref{eq-ai-bound} and \eqref{eq-ai-bound2}. To add the first $n/2$ primitive metrics, we may  define the first embedding as
 $$
 u_1=u+\frac{1}{\lambda^{\tau}}\sum_{k=1}^{\frac n2}\tilde b_k D_k.
 $$
From the definition it is clear that
\begin{equation}\label{eq-u1-support}
u_1=u\text{ on }\Omega\setminus \left (\supp \rho + \B_{\lambda^{1-2\tau}}\right )=\Omega\setminus \left (\supp \rho + \B_{\lambda^{-\kappa}}\right ).
\end{equation}
A straightforward calculation yields
$$\nabla u_1=\nabla u+\sum_{k=1}^{\frac n2}\tilde b_k A_k+\frac{1}{\lambda^{\tau}}\sum_{k=1}^{\frac n2}\tilde b_k B_k+
\frac{1}{\lambda^{\tau}}\sum_{k=1}^{\frac n2}D_k\otimes \nabla \tilde b_k  \,.$$
 Since $A_k^TD_k=0$ for $k=1, 2, \cdots, n/2$,  the induced metric will be
 \begin{align*}
 \nabla u_1^T\nabla u_1=&\nabla u^T\nabla u+\sum_{k=1}^{\frac n2}\tilde b_k^2\xi_k\otimes\xi_k
 +2\sum_{k=1}^{\frac n2}\tilde b_k\textrm{sym}\left (\nabla u^{T} A_k +\frac{1}{\lambda^{\tau}}\nabla u^{T}B_k\right ) \\
  & \,+ \frac{2 }{\lambda^{\tau}}\sum_{k=1}^{\frac n2}\textrm{sym}\left (\nabla u^{T}D_k\otimes \nabla\tilde b_k\right )+ \frac{2 }{\lambda^{\tau}}\sum_{k,l=1}^{\frac n2}\tilde b_k\tilde b_l\mathrm{sym}\left (A_k^{T}B_l\right )
 \\& + \frac{2}{\lambda^{2\tau}}\sum_{k,l=1}^{\frac n2}\tilde b_k\tilde b_l \mathrm{sym}\left (B_k^{T}B_l\right )
+\frac{2}{\lambda^{2\tau}}\sum_{k,l=1}^{\frac n2}\tilde b_k \mathrm{sym}\left (B_k^{T}D_l\otimes \nabla \tilde b_l\right ) \\
& +\frac{1}{\lambda^{2\tau}} \sum_{k=1}^{\frac n2} \nabla \tilde b_k \otimes \nabla \tilde b_k \,.
 \end{align*}
 Hence by \eqref{eq-rho-g-h-deco}, the first metric error will be
 \begin{align*}
 \nabla u_1^T\nabla u_1-\left(\nabla u^T\nabla u+\rho^2(G+H)
 -\sum_{k=1+\frac n2}^{n_*} b_k^2\xi_k\otimes\xi_k\right)=\mathcal{E}_{1}+\mathcal{E}_{2},
 \end{align*}
 with
 \begin{align*}
 \mathcal{E}_{1}:=&\sum_{k=1}^{\frac n2}(\tilde b_k^{2}-b_k^{2})\xi_k\otimes\xi_k
 +\sum_{k=1}^{\frac n2}(\tilde b_k -\rho\psi(\rho)b_k)\Lambda_k+\sum_{k,l=1}^{n_*}(\tilde b_k\tilde b_l - b_kb_l)\Theta_{kl},\\
 \mathcal{E}_{2}:=& \frac{2}{\lambda^{\tau}}\sum_{k=1}^{\frac n2}\textrm{sym}\left (\nabla u^{T}D_k\otimes \nabla\tilde b_k\right ) + \frac{2}{\lambda^{2\tau}}\sum_{k,l=1}^{\frac n2}\tilde b_k \mathrm{sym}\left (B_k^{T}D_l\otimes \nabla \tilde b_l\right )+\frac{1}{\lambda^{2\tau}} \sum_{k=1}^{\frac n2} \nabla \tilde b_k \otimes \nabla \tilde b_k\,.
 \end{align*}
To bound the first error, we shall note that
 \begin{equation}\label{eq-bkmollify}
 \|\tilde b_k^{2}- b_k^{2}\|_0\leq \| \tilde b_k + b_k\|_0\|\tilde b_k -b_k\|_0 \leq C \delta \lambda^{2-2\tau}\,,
 \quad \|\tilde b_k^{2}- b_k^{2}\|_1 \leq C \delta\lambda\,,
 \end{equation}
which can be derived from \eqref{eq-tilde-bk}. Similarly, it holds that
  \begin{align*}
  \|(\tilde b_k\tilde b_l - b_kb_l)\Theta_{kl}\|_0\leq C \delta \lambda^{2-2\tau},\quad
  \|(\tilde b_k\tilde b_l - b_kb_l)\Theta_{kl}\|_1 \leq C\delta\lambda \,.
  \end{align*}
 For the second term in $\mathcal E_{1}$, with the help of the facts that $1-\rho\psi(\rho) = 0$ for $\rho\geq 2\epsilon^{1/2}$ and $|b_k|\leq C\epsilon^{1/2}$ on  $\supp(1-\rho\psi(\rho)))$, using $|\nabla(\rho\psi(\rho))|\leq C\epsilon^{-1/2}\delta^{1/2}\lambda+ C|\psi(\rho)\nabla\rho |\leq C\epsilon^{-1/2}\delta^{1/2}\lambda$, \eqref{eq-tilde-bk} and \eqref{eq-LTkl}, we have
  \begin{align*}
  \|\left (\tilde b_k -\rho\psi( \rho) b_k\right )\Lambda_k \|_0 &\leq C\|\Lambda_k\|_0\left (\|\tilde b_k-b_k\|_0 + \|b_k(1-\rho\psi(\rho))\|_0 \right )
  %\\&\leq C\delta^{1/2}\lambda^{1-\tau}\left (\delta^{1/2}\lambda^{2-2\tau} +\epsilon^{1/2} \right )
  \leq C\delta \lambda ^{2-2\tau}\\
 \|\left (\tilde b_k - \rho\psi(\rho)b_k\right )\Lambda_k \|_1 &\leq C\|\Lambda_k\|_1\left (\|\tilde b_k-b_k\|_0 + \|b_k(1-\rho\psi(\rho))\|_0 \right )\\
 &\quad+C\|\Lambda_k\|_0\left (\|\tilde b_k-b_k\|_1 + \|b_k(1-\rho\psi(\rho))\|_1\right )\leq C\delta\lambda\,,
 %&\leq C \delta\lambda^{2-\tau}\leq C\delta\lambda\,,
 \end{align*}
Putting  the previous estimates together, we get
 \begin{equation}\label{eq-error1}
  \|\mathcal E_{1} \|_0 \leq C \delta\lambda^{2-2\tau}=C\delta\lambda^{1-\kappa},\,\quad
  \|\mathcal E_{1}\|_1 \leq C\delta \lambda\,.
  \end{equation}
 Directly from \eqref{eq-abdk}, \eqref{eq-udk}, \eqref{eq-tilde-bk}, we have
 \begin{equation}\label{eq-error2}
 \|\mathcal E_{2}\|_0 \leq C\delta\lambda^{1-\kappa},\,\quad \|\mathcal E_{2}\|_1 \leq C\delta \lambda\,.
 \end{equation}
This in turn implies
$${\bar\gamma}_1^{-1}\mathrm{Id}\leq \nabla u_1^T\nabla u_1\leq\bar\gamma_1\mathrm{Id},$$
for some $\bar\gamma_1$ depending only on $\gamma$ and $n$, in the same way as Proposition \ref{pr-step}.
%$$\nabla u_1^T\nabla u_1\geq \nabla u^T\nabla u-\mathcal E_{11}-\mathcal E_{12}\geq(2\gamma)^{-1}\mathrm{Id},$$
%and
%$$
%by taking $\lambda_*$ large and using $\delta\leq 1,$ where $\bar\gamma$ is some constant depending on $\gamma, n$.
Besides, we also have
\begin{equation}\label{eq-u1}
\begin{split}
&\|u_1-u\|_0\leq \sum_{k=1}^{\frac n2}\|\tilde b_k D_k\|_0\lambda^{-\tau }\leq C(\gamma)\delta^{1/2}\lambda^{-\tau },\\
&\|u_1-u\|_1\leq \sum_{k=1}^{\frac n2}\|\tilde b_k D_k\|_1\lambda^{-\tau }\leq C(\gamma)\delta^{1/2},\\
&\|u_1\|_2\leq\|u\|_2+\sum_{k=1}^{\frac n2}\|\tilde b_k D_k\|_2\lambda^{-\tau}\leq C(\gamma)\delta^{1/2}\lambda^\tau.
\end{split}
\end{equation}

For the remained $n\cdot n/2$ primitive metrics $\sum_{k=1+\frac n2}^{n_*}\tilde b_k^2\xi_k\otimes\xi_k$, we can apply $n/2$ times  Proposition \ref{pr-step} to add them. Note that all the conditions of Proposition \ref{pr-step} are satisfied for $u_1,$  $\tilde b_k,$ and $\Phi_k=x\cdot\xi_k$ with 
$$m=2n,\quad \delta_1=\delta,\quad \nu_1=\tilde\nu_1=\lambda^\tau.$$
Take $\delta_*=c_0^{-1}$ as in Proposition \ref{pr-step}.
Successive application of Proposition \ref{pr-step} yield $\{u_l\}_{l=2,\cdots, 1+\frac{n}{2}}$ with constants
\begin{align*}
\delta_l=\delta,\quad\nu_{l}=\tilde\nu_l=\lambda_{l},\quad \lambda_l=\lambda^\tau K^{l-1}, \,K=\lambda^{\kappa-1}, \quad \gamma_l=\bar\gamma_{l-1},
\end{align*}
and for $l=2,\dots, 1+\frac{n}{2}$
\begin{equation}\label{eq-stepjth-even}
\begin{split}
&\|u_{l}-u_{l-1}\|_0\leq C(\gamma_l)\delta^{1/2}\lambda_l^{-1}\\
&\|u_{l}-u_{l-1}\|_1\leq C(\gamma_l)\delta^{1/2},\quad \|u_{l}\|_2\leq C(\gamma_l)\delta^{1/2}\lambda_j,\\
&\left\|\nabla u_{l}^T\nabla u_{l}-\left(\nabla u_{l-1}^T\nabla u_{l-1}+ \sum_{k=n(l-2)+\frac n2}^{n(l-1)+\frac n2}\tilde b_k^2\xi_k\otimes\xi_k\right)\right\|_j\\
&\quad \leq C(\gamma_l)(\delta\lambda_{l-1}\lambda_l^{j-1}+\delta^2\lambda_l^j),\quad j=0, 1.
\end{split}
\end{equation}
Set $v=u_{1+\frac{n}{2}}\in C^2(\Omega,\R^{2n})$, which is the desired embedding. In fact, the new metric error
\begin{align*}
\mathcal{E}&=\nabla v^T\nabla v-(\nabla u^T\nabla u+\rho^2(G+H))\\
&=\mathcal{E}_3+\sum_{k=1+\frac n2}^{n_*}\tilde b_k^{2}\xi_k\otimes\xi_k+\nabla u_1^T\nabla u_1-(\nabla u^T\nabla u+\rho^2(G+H))\\
&=\mathcal{E}_3+\mathcal{E}_4+\mathcal E_{1}+\mathcal E_{2},
\end{align*}
where
\begin{align*}
\mathcal{E}_3&:=\nabla v^T\nabla v-\left(\nabla u_1^T\nabla u_1+\sum_{k=1+\frac n2}^{n_*}\tilde b_k^2\xi_k\otimes\xi_k\right)\\
&=\sum_{l=2}^{1+\frac{n}{2}}\left(\nabla u_l^T\nabla u_l-\left(\nabla u_{l-1}^T\nabla u_{l-1}+\sum_{k=nl-\frac {3n}2+1}^{nl-\frac n2}\tilde b_k^2\xi_k\otimes\xi_k\right)\right),\\
\mathcal{E}_4&:=\sum_{1+\frac n2}^{n_*}(\tilde b_k^{2}-b_k^2)\xi_k\otimes\xi_k.
\end{align*}
It follows from \eqref{eq-stepjth-even} that
\begin{equation}\label{eq-error3}
\begin{split}
\|\mathcal{E}_3\|_0&\leq \sum_{l=2}^{1+\frac n2}C(\gamma_l)(\delta\lambda_{l-1}\lambda_l^{-1}+\delta^2)\leq C(\gamma)(\delta\lambda^{1-\kappa}+\delta^2),\\
\|\mathcal{E}_3\|_1&\leq \sum_{l=2}^{1+\frac n2}C(\gamma_l)(\delta\lambda_{l-1}+\delta^2\lambda_{l})\leq
C(\gamma)(\delta\lambda^{(\frac{n}{2}-1)(\kappa-1)+\tau}+\delta^2\lambda^{\frac{n}{2}(\kappa-1)+\tau}),
\end{split}
\end{equation}
Moreover, same as the first part of $\mathcal{E}_{1},$ we have
\begin{equation}\label{eq-error4}
\|\mathcal{E}_4\|_0\leq C\delta\lambda^{2-2\tau},\quad \|\mathcal{E}_4\|_1\leq C\delta\lambda.
\end{equation}
Hence by $\tau=(\kappa+1)/2$ and \eqref{eq-error1},\eqref{eq-error2},\eqref{eq-error3} and \eqref{eq-error4},
we get \eqref{eq-stage-error-1}.  Besides, from Proposition \ref{pr-step}, it follows that
\begin{equation*}
v=u_{1+\frac n2}=\cdots =u_2=u_1  \text{ on } \Omega\setminus \left (\supp \rho + B_{\lambda^{-\kappa}}\right ),
\end{equation*}
which combining with \eqref{eq-u1-support} implies \eqref{eq-stage-1}. Moreover, with \eqref{eq-u1} and \eqref{eq-stepjth-even}, we deduce
\begin{align*}
\|v-u\|_0\leq&\|u_1-u\|_0+\sum_{l=2}^{1+\frac{n}{2}}\|u_l-u_{l-1}\|_0\leq C\delta^{1/2}\lambda^{-\tau}=C\delta^{1/2}\lambda^{-(\kappa+1)/2},\\
\|v-u\|_1\leq&\|u_1-u\|_1+\sum_{l=2}^{1+\frac{n}{2}}\|u_l-u_{l-1}\|_1\leq C\delta^{1/2},\\
\|v\|_{2}\leq& C\delta^{1/2}\lambda^{\frac{n+1}{2}(\kappa-1)+1},
\end{align*}
from which \eqref{eq-stage-v-1} follows.

\end{proof}

%We point out in passing that in Corollary \ref{c:metric-1} the constant $C$ depended on the exponent $\alpha>0$ through the Schauder estimates in applying Proposition \ref{p:conformal}. Since this step is not available in the higher dimensional setting, eventually the constant $C$ does not depend on $\alpha$.

%%%%%%%%%%%%%%%%%
\section{Proof of Proposition \ref{pr-inductive}}\label{se-inductive}
%%%%%%%%%%%%%%%%%%%

The proof of Proposition \ref{pr-inductive} is similar to that of Proposition 4.1 in \cite{CaoSze2022} and is divided into three subsections.

%%%%%%%%%%
\subsection{Parameters and cut-off functions}
%%%%%%%%%%%
First of all, recall from our global setup \eqref{e:gamma0} that in any local chart $\Omega_k$ the coordinate expression $G=(G_{ij})$ of the metric $g$ satisfies
$$
{\gamma}^{-1}\mathrm{Id}\leq G\leq \gamma\mathrm{Id},\quad \|G\|_{C^1(\Omega_k)}\leq \gamma.
$$
Since $u$ is an adapted short embedding with $\rho\leq\frac14$ (from  \eqref{eq-adapt} with $A$ is sufficiently large), after replacing $\gamma$ by $4\gamma$ if necessary we may assume in addition
$$
{\gamma}^{-1}\mathrm{Id}\leq \nabla u^T\nabla u\leq \gamma\mathrm{Id},
$$
Now, set $\delta_1:=A^{-\beta}$, and for $q\geq 1$
\begin{equation}\label{e:deltaqlambdaq}
\lambda_q=A\delta_q^{-\frac{1}{2\theta}},\quad \lambda_{q+1}=\lambda_q^b.
\end{equation}
We assume that $A$ is sufficiently large (depending on $\theta,\alpha$) so that
\begin{equation}\label{eq-ordering}
\delta_{q+1}\leq \tfrac{1}{4}\delta_q,\quad \lambda_{q+1}\geq 2\lambda_q.
\end{equation}

Next, following \cite{CaoSze2022}, we decompose $\mathcal{M}^n$ with respect to $\Sigma$ and $S$ and define associated cut-off functions. We define for $q=0, 1,2,\dots$
\begin{align*}
\Sigma_q&=\{x:\,\textrm{dist}(x,\Sigma)<s_*r_q\},\\
\widetilde\Sigma_q&=\{x:\,\textrm{dist}(x,\Sigma)<\tilde s_*r_q\},\\
S_q&=\{x:\,\textrm{dist}(x,S)<s_{**}r_q\},
\end{align*}
where
$$
r_q=A^{-1}\delta_{q+1}^{\frac{1}{2\theta}}=\lambda_{q+1}^{-1},
$$
$s_*<\tilde s_{*}$ and $s_{**}$ are geometric constants to be chosen in the following order:
\begin{enumerate}
\item[1)] Choose $s_{**}>0$ such that
\begin{equation}\label{eq-choiceofr**}
\rho(x)>\tfrac{3}{2}\delta_{q+2}^{1/2}\quad\textrm{ implies }\quad x\notin S_{q+1}.
\end{equation}
We recall that this is possible since \eqref{eq-adapt} implies $\rho(x)\leq A^{\theta}\textrm{dist}(x,S)^\theta$, and therefore $\rho(x)\leq s_{**}^\theta\delta_{q+2}^{1/2}$ for any $x\in S_{q+1}$; see \cite{CaoSze2022}.

\item[2)] Set $\tilde s_*=\bar{r}s_{**}$, where $\bar{r}>0$ is the constant in Condition \ref{c:geometric}, which yields that, for any $q\in\N$
\begin{equation}\label{eq-singlecharts}
\begin{split}
	\widetilde\Sigma_q\setminus S_q&\textrm{ \emph{is contained in a pairwise disjoint union of open sets,}}\\
	&\textrm{\emph{each contained in a single chart }}\Omega_k	
\end{split}
\end{equation}
\item[3)] Choose $s_*<\tilde s_*$ so that $\tfrac{1}{2}\tilde s_*<s_*<\tilde s_*$, hence %(in light of \eqref{eq-ordering})
$$
\widetilde\Sigma_{q+1}\subset \Sigma_q\subset \widetilde\Sigma_q\quad\textrm{ for all }q.
$$
\end{enumerate}
We then define cut-off functions $\chi_q\leq \tilde\chi_q$ associated to the sets $\Sigma_q\subset \tilde\Sigma_q$ as follows. Define $\phi, \tilde\phi, \psi, \tilde\psi\in C^\infty(0,\infty)$ with  $\phi,\tilde\phi$ monotonic increasing and $\psi,\tilde\psi$ monotonic decreasing, such that
$$
\phi(t),\,\tilde\phi(t)=\begin{cases} 1&t\geq 2\\ 0&t\leq \tfrac32\end{cases}\,,\quad
\tilde\phi(t)=1 \textrm{ on }\supp\phi,
$$
and
$$
\psi(t),\,\tilde\psi(t)=\begin{cases} 1&t\leq s_*\\ 0&t\geq \tilde s_*\end{cases}\,,
\quad \tilde\psi(t)=1 \textrm{ on }\supp\psi.
$$
Set
\begin{align*}
\chi_q(x)=\phi\left(\frac{\rho(x)}{\delta_{q+2}^{1/2}}\right)\psi\left(\frac{\textrm{dist}(x, \Sigma)}{r_{q+1}}\right),\quad
\tilde\chi_q(x)=\tilde\phi\left(\frac{\rho(x)}{\delta_{q+2}^{1/2}}\right)\tilde\psi\left(\frac{\textrm{dist}(x, \Sigma)}{r_{q+1}}\right).
\end{align*}
From \eqref{eq-adapt} and the choice of $r_q$, $s_*$, $\tilde s_*$ and the cut-off functions, the following estimates can be obtained
 \begin{align}
 |\nabla\chi_q|,\, |\nabla\tilde\chi_q|&\leq CA\delta_{q+2}^{-\frac{1}{2\theta}}=C\lambda_{q+2},\label{eq-gradient-chi}\\
\textrm{dist}(\supp\chi_q, \partial\supp\tilde\chi_q)&\geq C^{-1}A^{-1}\delta_{q+2}^{\frac{1}{2\theta}}=C^{-1}\lambda^{-1}_{q+2},\label{eq-chi-q-support}
 \end{align}
where the constant $C$ depends on $s_*,\tilde s_*$.Moreover, it holds that
\begin{equation}\label{eq-chi-define}
\begin{split}
\{x\in\Sigma_{q+1}|\rho(x)>2\delta_{q+2}^{1/2}\}&\subset\{x\in\mathcal{M}^n:\,\chi_q(x)=1\},\\
\supp\chi_q&\subset \{x\in\mathcal{M}^n:\,\tilde{\chi}_q(x)=1\},\\
\supp\tilde{\chi}_q&	\subset\{x\in\widetilde\Sigma_{q+1}:\,\rho(x)>\tfrac{3}{2}\delta_{q+2}^{1/2}\}.
\end{split}
\end{equation}
%%%%%%%%%%%%%%%
\subsection{Inductive construction of a sequence of adapted short embeddings}
%%%%%%%%%%%%%%%%
Following \cite[Section 4.4]{CaoSze2022}, we define the sequence of metric difference size $\{\rho_q\}$ in the following way.
Set $\rho_0=\rho$ and define $\rho_{q}$ for $q=1,2,\dots$ as
\begin{equation}\label{eq-rho-q+1}
\rho_{q+1}^2=\rho_q^2(1-\chi_q^2)+\delta_{q+2}\chi_q^2,
\end{equation}
which satisfies the following claims in \cite[Lemma 4.1]{CaoSze2022}: for any $q=0,1,\dots$
\begin{enumerate}
	\item[(a)] On $\supp\tilde\chi_q$ it holds that
	%\begin{equation*}%\label{e:rhoq-bound}
	$\tfrac{3}{2}\delta_{q+2}^{1/2}\leq\rho_q\leq2\delta_{q+1}^{1/2}.$
	%\end{equation*}
	\item[(b)] For every $x$,  $\rho_{q+1}(x)\leq\rho_q(x)$.
	\item[(c)] If $\rho_q(x)\leq\delta_{q+1}^{1/2}$, then $x\not\in\bigcup_{j=0}^{q-1}\supp\tilde\chi_j$ and consequently $\rho_q(x)=\rho(x)$.
	\item[(d)] If $\rho_q(x)\geq \delta_{q+1}^{1/2}$, then either $\chi_q(x)=1$ or $x\notin \Sigma_{q+1}$.
	\end{enumerate}

Having defined the sequence $\{\rho_q\}$, we then construct inductively a sequence of smooth adapted short embeddings $u_q$ with associated metric error $h_q$ such that for any $q=0,1,\dots$, the following hold:
\begin{itemize}
\item[$(1)_q$] For all $\mathcal{M}^n,$ it holds that $g-u_q^\sharp e=\rho_q^2(g+h_q).$
\item[$(2)_q$] If $x\notin\bigcup_{j=0}^{q-1}\supp\tilde\chi_j$, then $(u_q, \rho_q, h_q)=(u_0, \rho_0, h_0).$
\item[$(3)_q$] In $\mathcal{M}^n$ it holds that
\begin{align}
|\nabla^2u_q|\leq A^{b^2}\rho_q^{1-\frac{b^2}{\theta}},\quad & |\nabla\rho_q|\leq A^{b^2}\rho_q^{1-\frac{b^2}{\theta}} ,\label{eq-inductive-v-rho-j}\\
|h_q|\leq A^{-\frac{\theta\alpha}{2b^2}}\rho_q^{\frac{\alpha}{2b^2}},\quad &|\nabla h_q|\leq A^{{b^2}-\frac{\theta\alpha}{2b^2}}\rho_q^{\frac{\alpha}{2b^2}-\frac{b^2}{\theta}},
\label{eq-inductive-h-j}
\end{align}
\item[$(4)_q$] On $\{x: \rho(x)>\delta_{q+1}^{1/2}\}\cap\Sigma_q,$  the following sharper estimates hold
\begin{align}
|\nabla^2u_q|\leq A^{b}\rho_q^{1-\frac{b}{\theta}}, \quad &|\nabla\rho_q|\leq A^b\rho_q^{1-\frac{b}{\theta}} ,\label{eq-inductive-rho-q}\\
|h_q|\leq A^{-\frac{\theta\alpha}{b}}\rho_q^{\frac{\alpha}{b}}, \quad &|\nabla h_q|\leq A^{b-\frac{\theta\alpha}{b}}\rho_q^{\frac{\alpha}{b}-\frac{b}{\theta}}.
\label{eq-inductive-h-q}
\end{align}
\item[$(5)_q$]  Globally, it holds that for $q\geq1$
\begin{align}
&\|u_q-u_{q-1}\|_{C^0(\mathcal{M})}\leq \overline{C}\delta_q^{1/2}\lambda_{q}^{-1},\label{eq-inductive-v-difference-0}\\
&\|u_q-u_{q-1}\|_{C^1(\mathcal{M})}\leq\overline{C}\delta_q^{1/2},\label{eq-inductive-v-difference-1}
\end{align}
where $\overline{C}$ is the constant in Proposition \ref{pr-stage} and the norm is taken on $\mathcal{M}^n$.
\end{itemize}

{\it Initial step $q=0$.} Set $(u_0, h_0)=(u, h)$. Property $(1)_0$ holds by assumption with global estimates sharper than \eqref{eq-inductive-rho-q}-\eqref{eq-inductive-h-q} since $b>1$ and $\rho<1$. Thus $(3)_0$ is satisfied, whereas $(2)_0$, $(4)_0$ and $(5)_0$ are empty. 

{\it Inductive step $q\mapsto q+1$.} Suppose $(u_q, h_q)$ has already been defined and properties $(1)_q-(5)_q$ hold. We will show how an application of Proposition \ref{pr-stage} leads to $(u_{q+1}, h_{q+1})$ on $\supp\tilde\chi_q$. To this end we first deduce uniform estimates for $(u_q, \rho_q, h_q)$ on $\supp\tilde\chi_q$. 

By (a) above, on $\supp\tilde\chi_q,$ it holds that
\begin{equation}\label{eq-rho-q-bound}
\tfrac{3}{2}\delta_{q+2}^{1/2}\leq\rho_q\leq2\delta_{q+1}^{1/2}.
\end{equation}
Then, as in \cite[(4.19)-(4.20)]{CaoSze2022}, we use the definition of $\rho_q$ in \eqref{eq-rho-q+1} and property (c) whenever  $\frac{3}{2}\delta_{q+2}^{1/2}\leq\rho_q(x)\leq\delta_{q+1}^{1/2}$ on the one hand, and  property $(4)_q$ whenever $\delta_{q+1}^{1/2}<\rho_q(x)\leq2\delta_{q+1}^{1/2}$ on the other hand, to obtain
on $\supp\tilde\chi_q$ the estimates
\begin{equation}\label{eq-support-rho_q}
\begin{split}
\quad |\nabla^2u_q|&\leq \delta_{q+1}^{1/2}\lambda_{q+2}\,,\\
 |\nabla \rho_q|&\leq \delta_{q+1}^{1/2}\lambda_{q+2} \,,\quad \left|\frac{\nabla\rho_q}{\rho_q}\right|\leq \lambda_{q+2}\,,\\
|h_q|&\leq 2\lambda_{q+2}^{-\frac{\theta\alpha}{b^2}}\,, \quad |\nabla h_q|\leq \lambda_{q+2}^{1-\frac{\theta\alpha}{b^2}}\,.
\end{split}
\end{equation}
In order to apply Proposition \ref{pr-stage} we define
\begin{align*}
\tilde{\rho}_q=\chi_q\sqrt{\rho_q^2-\delta_{q+2}}\,,\quad \tilde h_q=\frac{\tilde\chi_q\rho_q^2}{\rho_q^2-\delta_{q+2}}h_q,
\end{align*}
so that
$$\tilde\rho_q^2(g+\tilde h_q)=\chi_q^2(\rho_q^2(g+h_q)-\delta_{q+2}g)=\chi_q^2(g-u_{q}^\sharp e-\delta_{q+2}g).$$
Due to \eqref{eq-rho-q-bound} we have
$\tfrac54\delta_{q+2}\leq\rho_q^2-\delta_{q+2}\leq4\delta_{q+1}$ on $\supp\tilde\chi_q$. Therefore $\tilde\rho_q$ and $ \tilde h_q$ are well defined.
%Besides, on $\supp\tilde\chi_q,$ one
% \begin{align*}
% |\nabla\sqrt{\rho_q^2-\delta_{q+2}}|&\leq C|\nabla\rho_q|,\\
%\frac{\rho_q^2}{\rho_q^2-\delta_{q+2}}&=1+\frac{\delta_{q+2}}{\rho_q^2-\delta_{q+2}}\leq2,\\
%\left|\nabla\frac{\rho_q^2}{\rho_q^2-\delta_{q+2}}\right|&=\left|\nabla\frac{\delta_{q+2}}{\rho_q^2-\delta_{q+2}}\right|
%\leq C\left|\frac{\nabla\rho_q}{\rho_q}\right|,
% \end{align*}
%where $C$ are geometric constants.
Furthermore, using \eqref{eq-gradient-chi} and \eqref{eq-support-rho_q}, we infer
\begin{equation}\label{eq-rho-tilde}
\begin{split}
|\nabla^2u_q|\leq &\delta_{q+1}^{1/2}\lambda_{q+2},\quad 0\leq\tilde\rho_q\leq\rho_q\leq2\delta_{q+1}^{1/2}\,,\quad |\tilde h_q|\leq2|h_q|\leq4\lambda_{q+2}^{-\frac{\theta\alpha}{b^2}},\\
|\nabla\tilde\rho_q|&\leq C(|\nabla\chi_q|\rho_q+|\nabla\rho_q|)\leq C\delta_{q+1}^{1/2}\lambda_{q+2},\\
|\nabla\tilde h_q|&\leq C(|\nabla\tilde\chi_q||h_q|+\left|\frac{\nabla\rho_q}{\rho_q}\right||h_q|+|\nabla h_q|)\leq C\lambda_{q+2}^{1-\frac{\theta\alpha}{b^2}}.
\end{split}
\end{equation}
In particular, $(u_q, \tilde\rho_q, \tilde h_q)$ satisfies the estimates \eqref{eq-stage-u}-\eqref{eq-stage-H} of Proposition \ref{pr-stage} on $\supp\tilde\chi_q$ with 
$$
\delta=4\delta_{q+1}, \lambda=C\lambda_{q+2}, 
$$
and $\alpha$ given by $\frac{\theta\alpha}{4b^2}$. From \eqref{eq-choiceofr**}  and \eqref{eq-singlecharts} it follows that  $\supp\tilde\chi_q$ is contained in a pairwise disjoint union of open sets, each contained in a single chart. Therefore, we can apply Proposition \ref{pr-stage} in local coordinates of $\supp\tilde\chi_q$ to add the term $\tilde\rho_q^2(g+\tilde h_q)$, with 
\begin{equation}\label{e:kappa}
\kappa=1+\frac{2\theta}{b}(b-1+\alpha)>1,
\end{equation}
provided $A\gg 1$ is sufficiently large (depending on $\alpha,\beta,b,\theta$ as well as $\gamma,n,\sigma_0$) so that the conditions $4\delta_{q+1}\leq 4\delta_1\leq \delta_*$ and $C\lambda_{q+2}\geq C\lambda_2\geq \lambda_*$ are satisfied. 

In this way we obtain $u_{q+1}$ and $\mathcal{E}$ such that
$$
g-u_{q+1}^\sharp e=(g-u_q^\sharp e)(1-\chi_q^2)+\delta_{q+2}g\chi_q^2+\mathcal{E}
$$
From \eqref{eq-stage-2} and \eqref{eq-stage-3} we deduce 
\begin{equation}\label{eq-uq+1-c2}
|\nabla^2u_{q+1}|\leq C\delta_{q+1}^{1/2}\lambda_{q+2}^{1+N(\kappa-1)}
=C\delta_{q+1}^{1/2}\lambda_{q+1}^{b+2N(b-1+\alpha)\theta},
\end{equation}
and
\begin{equation}\label{eq-errorq+1}
\begin{split}
|\mathcal{E}|&\leq C(\delta_{q+1}\lambda_{q+2}^{1-\kappa}+\delta_{q+1}^2)\,\\
|\nabla\mathcal{E}|&\leq C\delta_{q+1}\left(\lambda_{q+2}^{1+(N-1)(\kappa-1)}
+\delta_{q+1}\lambda_{q+2}^{1+N(\kappa-1)}\right).
\end{split}
\end{equation}
Next, using \eqref{e:deltaqlambdaq}, \eqref{e:kappa} and the choice of $b$ in \eqref{e:choiceb} we compute 
\begin{align*}
    \delta_{q+1}\lambda_{q+2}^{\kappa-1}=\delta_{q+1}\lambda_{q+1}^{2\theta(b-1+\alpha)}=A^{\frac{2\alpha\theta(1-\theta)}{1-\theta(1+2N)}}\delta_{q+1}^{\frac{1-\theta(1+2N)-\alpha(1-\theta)}{1-\theta(1+2N)}},
\end{align*}
from which we deduce that 
\begin{equation}\label{e:deltalambdakappa}
   \delta_{q+1}\leq \lambda_{q+2}^{1-\kappa}, 
\end{equation}
provided 
$\delta_{q+1}\leq \delta_1\leq A^{-\beta}$ 
for any $\beta>0$ with 
$$
\beta>\frac{2\alpha\theta(1-\theta)}{1-(1+2N)\theta-\alpha(1-\theta)}.
$$
In particular this holds if $\alpha<\frac{1-\theta(1+2N)}{2(1-\theta)}$ and $\beta>\frac{4\alpha\theta(1-\theta)}{1-\theta(1+2N)}$, or expressed differently, whenever $\alpha,\beta$ satisfy
\begin{equation}\label{e:alphabeta}
\alpha<\min\left\{\tfrac{1-\theta(1+2N)}{2(1-\theta)},\tfrac{1-\theta(1+2N)}{4\theta(1-\theta)}\beta\right\}\leq c_*(N,\theta)\beta, 
\end{equation}
with
\begin{equation}\label{e:c*}
c_*(N,\theta):=\tfrac{1-\theta(1+2N)}{4\theta(1-\theta)}.
\end{equation}
Observe that here we also require $\theta<(1+2N)^{-1}$ and $\beta\leq 1$.
This determines the choice of parameters in \eqref{e:alpha*beta}.
In turn, with these choice of parameters \eqref{e:deltalambdakappa} implies that we can drop the second terms in the estimates for $\mathcal{E}$ and $\nabla\mathcal{E}$ in \eqref{eq-errorq+1}, to arrive at 
\begin{align}
|\mathcal{E}|&\leq C\delta_{q+2}\lambda_{q+1}^{-2\theta\alpha}, \label{eq-error-q+1--1}\\
|\nabla\mathcal{E}|&\leq C\delta_{q+2}\lambda_{q+1}^{b+2\theta N(b-1)
+2\theta(N-1)\alpha}.\label{eq-error-q+1--2}
\end{align}

Now we proceed exactly as in \cite{CaoSze2022}.
By \eqref{eq-stage-1}, we get
$$
\supp(u_{q+1}-u_q), \quad\supp\,\mathcal{E}\subset\supp\,\chi_q+\mathbb{B}_{\tau_q}(0),$$
with
$$
\tau_q=(C\lambda_{q+2})^{-\kappa}\leq A^{-{2\theta}(b-1+\alpha)}\lambda_{q+2}^{-1}\leq C^{-1}\lambda_{q+2}^{-1},
$$
where $C$ is the constant in \eqref{eq-chi-q-support} and the last inequality holds provided $A$ is sufficiently large. Consequently $u_{q+1}=u_q$ and $\mathcal{E}=0$ outside  $\supp\tilde\chi_q$.

Moreover, \eqref{eq-inductive-v-difference-0}-\eqref{eq-inductive-v-difference-1} for the case $q+1$ follows immediately from \eqref{eq-stage-2}, hence $(5)_{q+1}$ is verified. Define
$$h_{q+1}=(1-\chi_q^2)\frac{\rho_q^2}{\rho_{q+1}^2}h_q+\frac{\mathcal{E}}{\rho_{q+1}^2}$$
and then $g-u_{q+1}^\sharp e=\rho_{q+1}^2(g+h_{q+1}),$
verifying $(1)_{q+1}.$  Note that on $\supp\tilde\chi_q$ using \eqref{eq-rho-q-bound} we have
\begin{equation}\label{eq-rhoq+1-bound}
\begin{split}
\rho_{q+1}^2&\leq4\delta_{q+1}(1-\chi_q^2)+\delta_{q+2}\chi_q^2\leq 4\delta_{q+1},\\
\rho_{q+1}^2&\geq\tfrac94\delta_{q+2}(1-\chi_q^2)+\delta_{q+2}\chi_q^2\geq\delta_{q+2},
\end{split}
\end{equation}
 which also implies that $\mathcal{E}$ and $h_{q+1}$ are well defined. Further, it is easy to see that $(\rho_{q+1}, h_{q+1})$ agrees with $(\rho_q, h_q)$ outside $\supp\tilde\chi_q$. Therefore, to conclude with the induction step we need to verify  $(2)_{q+1}-(4)_{q+1}$ on $\supp\tilde\chi_q$.

\bigskip

{\it Verification of $(2)_{q+1}$.} If $x\not\in\bigcup_{j=0}^{q}\supp\tilde\chi_j$, then  $\tilde\chi_q(x)=0$ and therefore
$$(u_{q+1}, \rho_{q+1}, h_{q+1})=(u_q, \rho_q, h_q)=(u_0, \rho_0, h_0).$$

{\it Verification of $(3)_{q+1}$.} 

To verify the estimates in $(3)_{q+1}$ on $\supp\tilde\chi_q$ we can use the bound
\begin{equation}\label{e:rhoq1bound1}
\delta_{q+2}^{1/2}\leq \rho_{q+1}\leq 2\delta_{q+1}^{1/2},
\end{equation}
which follows from \eqref{eq-rhoq+1-bound}. 
Using \eqref{eq-gradient-chi}, \eqref{eq-rho-q+1} and \eqref{eq-support-rho_q} we obtain
\begin{equation}\label{eq-gradient-rho-q+1}
|\nabla\rho_{q+1}|=\frac{|\nabla\rho_{q{+1}}^2|}{2\rho_{q+1}}\leq\frac{C}{\rho_{q+1}}(|\rho_q\nabla\rho_q|
+|\nabla\chi_q|(\rho_q^2+\delta_{q+2}))\leq C\frac{\delta_{q+1}\lambda_{q+2}}{\delta_{q+2}^{1/2}}.
\end{equation}
Similarly, using \eqref{eq-support-rho_q}, \eqref{eq-uq+1-c2}-\eqref{eq-error-q+1--1} and \eqref{eq-rhoq+1-bound} we obtain
\begin{equation}\label{eq-hq+1-bound-v-c2}
|h_{q+1}|\leq|h_q|+\frac{|\mathcal{E}|}{\rho_{q+1}^2}\leq 2\lambda_{q+2}^{-\frac{\alpha\theta} {b^2}}+C\lambda_{q+1}^{-2\theta\alpha}
\end{equation}
and for $|\nabla h_{q+1}|$ we compute, using \eqref{eq-gradient-chi}, \eqref{eq-support-rho_q}, \eqref{eq-error-q+1--1}, \eqref{eq-error-q+1--2} and \eqref{eq-gradient-rho-q+1}, 
\begin{equation}\label{eq-gradient-h-q+1}
\begin{split}
|\nabla h_{q+1}|&\leq|\nabla h_q|+\frac{1}{\rho_{q+1}^2}(|\nabla\mathcal{E}|+\delta_{q+2}|\nabla(h_q\chi_q^2)|)
+\frac{2|\nabla\rho_{q+1}|}{\rho_{q+1}^3}(\delta_{q+2}|h_q|+|\mathcal{E}|)\\
&\leq C\lambda_{q+2}^{1-\frac{\theta\alpha}{b^2}}
+C\left(\lambda_{q+1}^{b+2\theta N(b-1)
+2\theta(N-1)\alpha}+\lambda_{q+2}^{1-\frac{\theta\alpha}{b^2}}\right)
+C\frac{\delta_{q+1}\lambda_{q+2}}{\delta_{q+2}}
(\lambda_{q+2}^{\frac{-\theta\alpha}{b^2}}+\lambda_{q+1}^{-2\theta\alpha})\\
&\leq C\lambda_{q+1}^{b+2\theta N(b-1)
+2\theta(N-1)\alpha},
\end{split}
\end{equation}

\bigskip

Next, we claim that $\textrm{\eqref{eq-inductive-v-rho-j}}_{q+1}$ will follow from
\begin{equation}\label{e:3-u-rho}
|\nabla^2 u_{q+1}|,|\nabla \rho_{q+1}|\leq c\delta_{q+1}^{1/2}\lambda_{q+1}^{b^2},
\end{equation}
for some small geometric constant $c$.
Indeed, using \eqref{e:deltaqlambdaq}, 
$$
\delta_{q+1}^{1/2}\lambda_{q+1}^{b^2}=A^{b^2}\delta_{q+1}^{\tfrac12-\tfrac{b^2}{2\theta}}\leq CA^{b^2}\rho_{q+1}^{1-\frac{b^2}{\theta}}.
$$
Similar computations show that $\textrm{\eqref{eq-inductive-h-j}}_{q+1}$ will follow from 
\begin{equation}\label{e:3-h}
|h_{q+1}|\leq c\lambda_{q+2}^{-\frac{\theta\alpha}{2b^2}},\quad |\nabla h_{q+1}|\leq c\lambda_{q+1}^{b^2-\frac{\theta\alpha}{2b^2}}.
\end{equation}
Thus, in order to conclude $(3)_{q+1}$, we just need to compare the powers of $\lambda_{q+1}$ in the estimates \eqref{eq-gradient-rho-q+1}, \eqref{eq-uq+1-c2}, \eqref{eq-error-q+1--1}, \eqref{eq-error-q+1--2} to the powers in \eqref{e:3-u-rho} and \eqref{e:3-h}: we need
\begin{equation}\label{e:bNalpha1}
    b+2N\theta(b-1+\alpha)<b^2,\quad b+2\theta N(b-1)+2\theta(N-1)\alpha<b^2-\frac{\theta\alpha}{2b^2}
\end{equation}
as well as 
$$
b+\theta(b-1)< b^2,\quad -\frac{\theta\alpha}{b^2}< -\frac{\theta\alpha}{2b^2}.
$$
The latter two are obviously satisfied, whereas the former two can be checked using the choice of $b$ in \eqref{e:choiceb}. Then, for $A$ sufficiently large (in order to take care of the small geometric constants in \eqref{e:3-u-rho}-\eqref{e:3-h}), the estimates in $(3)_{q+1}$ hold.

{\it Verification of $(4)_{q+1}$.} Observe that
\begin{align*}
\{x\in\Sigma_{q+1}: \rho_0(x)>\delta_{q+2}^{1/2}\}
=\{\chi_q(x)=1\}\cup\{x\in\Sigma_{q+1}: \delta_{q+2}^{1/2}\leq\rho_0(x)\leq2\delta_{q+2}^{1/2}\}.
\end{align*}
Moreover, if $x\in\{\chi_q=1\},$ then $\rho_{q+1}=\delta_{q+2}^{1/2}$.
Arguing analogously to above, but this time using the bound
\begin{equation}\label{e:rhoq1bound2}
\delta_{q+2}^{1/2}\leq \rho_{q+1}\leq 2\delta_{q+2}^{1/2},
\end{equation}
we see that $\textrm{\eqref{eq-inductive-rho-q}}_{q+1}$ will follow from
\begin{equation}\label{e:4-u-rho}
|\nabla^2 u_{q+1}|,\,|\nabla \rho_{q+1}|\leq c\delta_{q+2}^{1/2}\lambda_{q+2}^{b}(=c\delta_{q+1}^{1/2}\lambda_{q+1}^{b^2-\theta(b-1)}),
\end{equation}
whereas $\textrm{\eqref{eq-inductive-h-q}}_{q+1}$ will follow from
\begin{equation}\label{e:4-h}
    |h_{q+1}|\leq c\lambda_{q+2}^{-\frac{\theta\alpha}{b}},\quad |\nabla h_{q+1}|\leq c\lambda_{q+2}^{b-\frac{\theta\alpha}{b}}(=c\lambda_{q+1}^{b^2-\theta\alpha}).
\end{equation}

On the other hand, repeating the computations leading to \eqref{eq-gradient-rho-q+1}, but this time using \eqref{e:rhoq1bound2} instead of \eqref{e:rhoq1bound1}, we obtain
\begin{equation}\label{e:4-rhoq1}
    |\nabla \rho_{q+1}|\leq C\delta_{q+2}^{1/2}\lambda_{q+2}.
\end{equation}
Next, we estimate $h_{q+1}$, considering two cases. If $\chi_q(x)=1$, then $h_{q+1}=\frac{\mathcal{E}}{\delta_{q+2}}$, therefore using \eqref{eq-error-q+1--1},
\begin{equation}\label{e:4-hq1-1}
    |h_{q+1}|\leq C\lambda_{q+1}^{-2\theta\alpha}
    =C\lambda_{q+2}^{-\frac{2\theta\alpha}{b}}. 
\end{equation}
On the other hand, if $x\in\{x\in\Sigma_{q+1}: \delta_{q+2}^{1/2}\leq\rho_0(x)\leq2\delta_{q+2}^{1/2}\},$
then $(u_q, \rho_q, h_q)=(u_0, \rho_0, h_0)$ by $(2)_q$.
Thus
\begin{equation}\label{eq-h-q+1-bound}
\begin{split}
|h_{q+1}|&\leq |h_0|+\left|\frac{\mathcal{E}}{\rho_{q+1}^2}\right|
\leq C(\lambda_{q+2}^{-2\alpha\theta}+\lambda_{q+2}^{-\frac{2\theta\alpha}{b}})
\leq C\lambda_{q+2}^{-\frac{2\alpha\theta}{b}}.
\end{split}
\end{equation}
Again comparing powers of $\lambda_{q+1}$ in \eqref{e:4-u-rho}-\eqref{e:4-h} to the estimates for $\nabla\rho_{q+1}, \nabla u_{q+1}, h_{q+1}$ and $\nabla h_{q+1}$ in \eqref{eq-uq+1-c2} and \eqref{eq-gradient-h-q+1}, we reduce to the inequalities
\begin{equation}\label{e:bNalpha2}
    b+2N\theta(b-1+\alpha)<b^2-\theta(b-1),\quad b+2\theta N(b-1)+2\theta(N-1)\alpha<b^2-\theta\alpha,
\end{equation}
and comparing powers of  $\lambda_{q+2}$ in \eqref{e:4-u-rho}-\eqref{e:4-h}  to that in  \eqref{e:4-rhoq1}, \eqref{e:4-hq1-1} and  \eqref{eq-h-q+1-bound}, we require
\begin{equation*}
    1<b,\quad -2\theta\alpha<-\theta\alpha,%\,-\alpha\theta b<-\theta\alpha.
\end{equation*}
which are obvious. \eqref{e:bNalpha2} can again be checked using the choice of $b$ in \eqref{e:choiceb} (and are in fact sharper than the inequalities \eqref{e:bNalpha1}.   
With this, we thus conclude the verification of $(4)_{q+1}$.  

This concludes the proof of the induction step $(1)_{q+1}-(5)_{q+1}$.

%%%%%%%%%%%%%%
\subsection{Conclusion}%\label{ss:conlude}
%%%%%%%%%%%%%%

%We are now in a position to take the limit as $q\rightarrow\infty.$
We can see $\delta_q^{1/2}\leq 2^{-q-1}$ and $\delta_q^{1/2}\lambda_q^{-1}\leq A^{-1}2^{-q-1}$ from \eqref{eq-ordering}. With $(5)_q$,  we will see that $\{u_q\}$ is a Cauchy sequence in $C^1(\mathcal{M}^n)$.
From \eqref{eq-rho-q+1}, we have $0\leq \rho_{q}-\rho_{q+1}\leq 2\delta_{q+1}^{1/2}$, so that $\{\rho_q\}$ is a Cauchy sequence in $C^0(\mathcal{M}^n)$. From $(1)_q-(3)_q$, it follows that $\{h_q\}$ is a Cauchy sequence in $C^0(\mathcal{M}^n)$. %indeed, this follows from the formula  $(1)_q$, the fact that $u_q^\sharp e$ and $\rho_q^2$ are Cauchy sequences, and \eqref{eq-inductive-h-j}.
Note that $\supp\tilde\chi_q\subset\Sigma_q$ and $\cap_q\Sigma_q=\Sigma$. We obtain from $(2)_q$ that  for any $x\in \mathcal{M}^n\setminus \Sigma$ there exists $q_0=q_0(x)$ such that
$
(u_{q}, \rho_{q}, h_{q})=(u_{q_0}, \rho_{q_0}, h_{q_0})
$
for all $q\geq q_0(x)$. From $\supp\tilde\chi_q\subset\{\rho>\delta_{q+1}^{1/2}\}$, $(u_q, \rho_q, h_q)$ agrees with $(u,\rho,h)$ on $S$. Thus there exist
\begin{align*}
\bar{u}&\in C^1(\mathcal{M}^n)\cap C^2(\mathcal{M}^n\setminus \Sigma),\\
\bar{\rho}&\in C^0(\mathcal{M}^n)\cap C^1(\mathcal{M}^n\setminus\Sigma),\\
\bar{h}&\in C^0(\mathcal{M}^n, \R^{2\times2})\cap C^1(\mathcal{M}^n\setminus\Sigma, \R^{2\times 2}),
\end{align*}
such that
$$
u_q\rightarrow \bar{u}, \quad u_q^\sharp e\rightarrow \bar{u}^\sharp e, \quad \rho_q\rightarrow\bar{\rho},\quad h_q\rightarrow \bar{h}\textrm{ uniformly on }\mathcal{M}^n.
$$
First, $(\bar u, \bar\rho, \bar h)=(u,\rho,h)$ on $S$. Secondly, by $(1)_q$, the limit $(\bar{u}, \bar{\rho}, \bar{h})$ satisfies
$$
g-\bar{u}^\sharp e=\bar{\rho}^2(g+\bar{h})\textrm{ on }\mathcal{M}^n
$$
and by $(5)_q$
\[\|\bar u-u\|_0\leq\sum_{q=1}^\infty\|u_q-u_{q-1}\|_0\leq\overline CA^{-1}\sum_{q\geq1}2^{-q-1}\leq A^{-1}.\]
Thirdly, from $(3)_q$ and (b), it follows that
\begin{align*}
&|\nabla^2\bar{u}|\leq A^{b^2}\bar{\rho}^{1-\frac{b^2}{\theta}},\quad |\nabla\bar{\rho}|\leq A^{b^2}\bar{\rho}^{1-\frac{b^2}{\theta}},\\
0\leq\bar\rho\leq\rho&\leq A^{-\beta},\quad |\bar{h}|\leq A^{-\frac{\theta\alpha}{2b^2}}\bar{\rho}^{\frac{\alpha}{2b^2}},\quad |\nabla \bar{h}|\leq A^{{b^2}-\frac{\theta\alpha}{2b^2}}\bar{\rho}^{\frac{\alpha}{2b^2}-\frac{b^2}{\theta}}.
\end{align*}
Finally, from (a) and \eqref{eq-chi-define}, we get $\rho_q\leq 2\delta_{q+1}^{1/2}$ on $\Sigma$, which then implies that %for any $x\notin \Sigma\supset S$ we have $\bar{\rho}(x)=\rho_q(x)>0$ for some $q$, we deduce
$\{\bar{\rho}=0\}=\Sigma$.

Finally, to see that $u\in C^{1,\theta'}(\mathcal{M})$, we observe that 
$(u_{q+1}-u_q)$ is supported on $\supp\tilde\chi_{q}$; hence $|\nabla^2u_{q+1}-\nabla^2u_q|$ can be estimated using the estimate \eqref{eq-uq+1-c2} for $|\nabla^2u_{q+1}|$. Using \eqref{e:bNalpha2} we conclude
$$
\|u_{q+1}-u_q\|_{C^2(\mathcal{M})}\leq C\delta_{q+1}^{1/2}\lambda_{q+1}^{b^2-\theta(b-1)}.
$$
Combining with \eqref{eq-inductive-v-difference-1} and interpolation, we see that
\begin{align*}
\|u_{q+1}-u_q\|_{C^{1,\theta'}(\mathcal{M})}&\leq C\delta_{q+1}^{1/2}\lambda_{q+1}^{\theta'b^2-\theta'\theta(b-1)}\\
&= CA^{\theta}\lambda_{q+1}^{-\frac{\theta^2}{b^2}(b-1)}
\end{align*}
so that $(u_q)_q$ is a Cauchy sequence in $C^{1,\theta'}$ for $\theta'=\theta/b^2$.

This proves that $\bar{u}$ is a desired adapted short embedding and completes the proof of Proposition \ref{pr-inductive}.
%%%%%%%%%%%

\begin{appendix}
\section{H\"older spaces}

We recall the following interpolation inequality for H\"older norms
\begin{equation}\label{eq-interpolation}
\|f\|_{j,\alpha}\leq C\|f\|_{j_1,\alpha_1}^{\tau}\|f\|_{j_2,\alpha_2}^{1-\tau},
\end{equation}
where $C$ depends $j, j_1, j_2$, $\alpha, \alpha_1,\alpha_2,\tau\in[0, 1]$ and
$$
j+\alpha=\tau(j_1+\alpha_1)+(1-\tau)(j_2+\alpha_2).
$$

For compositions, we also have the following estimates.

\begin{proposition}\label{pr-composition}
Let $\Psi:\Omega\to\R$ and $f: \R^n\supset U\to\Omega$ be two $C^j$ functions with $\Omega\subset\R^{n_0},$
Then for any $r, s\geq0$, it holds
\begin{align*}
&\|\Psi\circ f\|_r\leq C(r)(\|\Psi(\cdot)\|_r\|f\|_1^r+\|\Psi(\cdot)\|_1\|f\|_r+\|\Psi(\cdot)\|_0),\, r\geq1,\\
&\|\Psi\circ f\|_r\leq \min(\|\Psi(\cdot)\|_r\|f\|_1^r,\, \|\Psi(\cdot)\|_1\|f\|_r)+\|\Psi(\cdot)\|_0,\, 0\leq r\leq1.
\end{align*}
Let $f_1, f_2: \R^n\supset U \to \R$ be two $C^j$ functions. Then there is a constant $C(\alpha, j, n, U)$ such that
\[ [f_1f_2]_j\leq C(\|f_1\|_0[f_2]_j + \|f_2\|_0[f_1]_j).\]
\end{proposition}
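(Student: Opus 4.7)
The proposition has three assertions: the composition estimate for $r\geq 1$, the composition estimate for $0\leq r\leq 1$, and the Leibniz-type bound for a product. I would prove them separately, leaning on two workhorses: the Fa\`a di Bruno / Leibniz formulas for derivatives of compositions or products, and the interpolation inequality \eqref{eq-interpolation} to dispatch intermediate terms.

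For the composition with integer $r=k\geq 1$, the plan is to start from Fa\`a di Bruno's formula, which writes, for $|\alpha|=k$,
\[
D^\alpha(\Psi\circ f) \;=\; \sum (D^j\Psi)(f)\prod_i D^{\beta_i}f ,
\]
where the sum runs over set-partitions $(\beta_i)$ of $\alpha$ with $j$ blocks. The two extreme terms produce exactly $\|\Psi\|_k\|f\|_1^k$ (all $|\beta_i|=1$) and $\|\Psi\|_1\|f\|_k$ (a single block $\beta=\alpha$). For each intermediate term I would bound $\prod_i\|D^{\beta_i}f\|_0$ using \eqref{eq-interpolation} to interpolate between $\|f\|_1$ and $\|f\|_k$, balancing the exponents so that the outcome is majorized by $\|\Psi\|_k\|f\|_1^k+\|\Psi\|_1\|f\|_k$; the $\|D^j\Psi\|_0$ prefactor is interpolated between $\|\Psi\|_1$ and $\|\Psi\|_k$ in the same fashion. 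Adding $\|\Psi\|_0$ accounts for the zeroth-order size of $\Psi\circ f$. For non-integer $r$ the same decomposition is applied to the H\"older difference quotient of the top-order derivative.

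For $r\in[0,1]$, the claim reduces to a direct estimate of the H\"older seminorm: for $x\neq y$,
\[
|\Psi(f(x))-\Psi(f(y))| \;\leq\; \min\bigl(\|\Psi\|_r|f(x)-f(y)|^r,\ \|\Psi\|_1|f(x)-f(y)|\bigr).
\]
In the first alternative I bound $|f(x)-f(y)|\leq \|f\|_1|x-y|$ to get $\|\Psi\|_r\|f\|_1^r|x-y|^r$; in the second I bound $|f(x)-f(y)|\leq [f]_r|x-y|^r$ to get $\|\Psi\|_1[f]_r|x-y|^r$. Dividing through by $|x-y|^r$ and adding the pointwise bound $\|\Psi\|_0$ produces the min estimate.

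For the product assertion, Leibniz' rule on $D^\alpha(f_1 f_2)$ with $|\alpha|=j$ gives
\[
[f_1 f_2]_j \;\leq\; C\sum_{k=0}^{j}[f_1]_k[f_2]_{j-k}.
\]
The endpoints $k=0$ and $k=j$ are precisely $\|f_1\|_0[f_2]_j$ and $[f_1]_j\|f_2\|_0$. For $0<k<j$, \eqref{eq-interpolation} yields $[f_i]_k\leq C\|f_i\|_0^{1-k/j}[f_i]_j^{k/j}$, so
\[
[f_1]_k[f_2]_{j-k} \;\leq\; C\bigl(\|f_2\|_0[f_1]_j\bigr)^{k/j}\bigl(\|f_1\|_0[f_2]_j\bigr)^{1-k/j},
\]
and the weighted AM--GM (Young's inequality with dual exponents $j/k$ and $j/(j-k)$) absorbs this into $C\bigl(\|f_1\|_0[f_2]_j+\|f_2\|_0[f_1]_j\bigr)$, which is the stated bound. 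No serious obstacle is anticipated; the three parts are classical facts about H\"older spaces and the work is purely bookkeeping, the only mild care being to apply interpolation separately to $f_1$ and $f_2$ before combining through Young so that the exponents balance.
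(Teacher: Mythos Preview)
Your proposal is correct and follows the standard route (Fa\`a di Bruno plus interpolation for compositions, Leibniz plus interpolation and Young for products); note, however, that the paper does not actually supply a proof of this proposition---it is stated in the appendix as a recalled fact about H\"older spaces, alongside the interpolation inequality \eqref{eq-interpolation} and the mollification lemma. So there is no ``paper's proof'' to compare against, and your argument is precisely the kind of justification one would give if asked to fill in the details.
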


For mollification the following claim holds (see e.g. \cite[Lemma 1]{CoDeSz2012}).
\begin{proposition}\label{pr-mollification}
Let $\varphi\in C_c^\infty(B_1(0))$ be symmetric, nonnegative function such that $\int\varphi dx=1$. For any $r, s\geq0,$ and $0<\alpha\leq1,$ it holds that
\begin{itemize}
\item[(1)]$\|f*\varphi_\ell\|_{r+s}\leq C(r, s)\ell^{-s}\|f\|_r$,
\item[(2)]$\|f-f*\varphi_\ell\|_r\leq C\ell^{1-r}[f]_1, \text{ if } 0\leq r\leq1,$
\item[(3)] $\|(f_1f_2)*\varphi_\ell-(f_1*\varphi_\ell)(f_2*\varphi_\ell)\|_r\leq C(r, \alpha)\ell^{2\alpha-r}\|f_1\|_\alpha\|f_2\|_\alpha.$
\end{itemize}
\end{proposition}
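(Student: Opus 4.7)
All three claims are standard facts about the scaled mollifier $\varphi_\ell(x)=\ell^{-n}\varphi(x/\ell)$, which satisfies $\|\varphi_\ell\|_{L^1}=1$ and $\|\nabla^k\varphi_\ell\|_{L^1}\leq C(k)\ell^{-k}$ for every $k\in\N$. My plan is to treat each claim separately, handling the integer-order case directly and deferring fractional orders to the interpolation inequality \eqref{eq-interpolation}.

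For (1), the key identity is $\nabla^s(f*\varphi_\ell)=f*\nabla^s\varphi_\ell$ for integer $s$ (together with the fact that convolution with $\varphi_\ell$ commutes with $\nabla^r$), from which
\[
\|\nabla^{r+s}(f*\varphi_\ell)\|_0 \leq \|\nabla^r f\|_0 \, \|\nabla^s\varphi_\ell\|_{L^1} \leq C\ell^{-s}\|f\|_r
\]
for integers $r,s$. For the H\"older seminorm part of $\|\cdot\|_{r+s}$ when $r$ is fractional, I would write $\nabla^{r+s}(f*\varphi_\ell)=(\nabla^{\lfloor r\rfloor}f)*\nabla^{s+\lceil r\rceil-\lfloor r\rfloor}\varphi_\ell$ after integration by parts and use that convolution preserves H\"older seminorms. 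The fractional $s$ case follows from \eqref{eq-interpolation} applied between consecutive integers.

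For (2), with $r=0$ I would use the mean-value-type bound
\[
|f(x)-(f*\varphi_\ell)(x)|=\Bigl|\int\varphi_\ell(y)\bigl(f(x)-f(x-y)\bigr)\,dy\Bigr|\leq [f]_1\int |y|\varphi_\ell(y)\,dy\leq C\ell[f]_1,
\]
and for $r=1$ simply note $\|\nabla(f-f*\varphi_\ell)\|_0\leq \|\nabla f\|_0+\|(\nabla f)*\varphi_\ell\|_0\leq 2[f]_1$. The intermediate $0<r<1$ follows by interpolation between these two endpoint bounds.

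For (3), which is the most delicate claim, the starting point is the (easily verified) pointwise commutator identity
\[
(f_1f_2)*\varphi_\ell(x)-(f_1*\varphi_\ell)(x)(f_2*\varphi_\ell)(x)=\int\varphi_\ell(y)\bigl(f_1(x-y)-(f_1*\varphi_\ell)(x)\bigr)\bigl(f_2(x-y)-(f_2*\varphi_\ell)(x)\bigr)\,dy.
\]
On $\mathrm{supp}\,\varphi_\ell$ one has $|f_i(x-y)-(f_i*\varphi_\ell)(x)|\leq |f_i(x-y)-f_i(x)|+|f_i(x)-(f_i*\varphi_\ell)(x)|\leq C\ell^\alpha[f_i]_\alpha$, so plugging in gives the $r=0$ bound $C\ell^{2\alpha}[f_1]_\alpha[f_2]_\alpha$. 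For integer $r\geq 1$ I would differentiate the identity, distributing $\nabla^r$ via Leibniz between the two factors inside the integral (picking up at most $\ell^{-r}$ from derivatives falling on $\varphi_\ell$) and then interpolate for fractional $r$ using \eqref{eq-interpolation}. The main obstacle I anticipate is the bookkeeping of derivative distribution in (3) for fractional $r$: one has to carefully balance the $\ell^{-r}$ cost coming from derivatives on $\varphi_\ell$ against the $\ell^{2\alpha}$ gain from the two H\"older differences, and verify that the bound remains $C\ell^{2\alpha-r}[f_1]_\alpha[f_2]_\alpha$ uniformly in the way derivatives are split — this is handled cleanly by interpolating between integer endpoints rather than by a direct computation.
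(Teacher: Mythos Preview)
The paper does not give its own proof of Proposition \ref{pr-mollification}; it is stated in the appendix as a standard mollification lemma with a reference to \cite[Lemma 1]{CoDeSz2012}. Your sketch is the standard argument and is essentially correct: the commutator identity you wrote for (3) is exactly the one used in the cited reference, and the endpoint-plus-interpolation strategy for fractional orders is the usual way to handle (1) and (2). The only point worth tightening in your write-up of (3) for $r\geq 1$ is that, since $f_i\in C^\alpha$ only, derivatives cannot fall on $f_i$ directly; after the change of variables $z=x-y$ they land either on $\varphi_\ell$ or on $a_i=f_i*\varphi_\ell$, and in the latter case one uses $\int\nabla^k\varphi_\ell=0$ to recover $|\nabla^k a_i|\leq C\ell^{\alpha-k}[f_i]_\alpha$, preserving the $\ell^{2\alpha-r}$ balance.
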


\end{appendix}

%%%%%%%%%%
%
\bigskip

\bibliographystyle{amsalpha}
\bibliography{references}

%%%%%%%%%%%%%%%%%%%%%%%%%%%%%%%%%%%%%%%%%%%%%%%

\end{document}